\let\cal\mathcal
\let\frak\mathfrak
\let\Bbb\mathbb
\def\>{\relax\ifmmode\mskip.666667\thinmuskip\relax\else\kern.111111em\fi}
\def\:{\relax\ifmmode\mskip.333333\thinmuskip\relax\else\kern.0555556em\fi}
\def\<{\relax\ifmmode\mskip-.333333\thinmuskip\relax\else\kern-.0555556em\fi}
\def\?{\relax\ifmmode\mskip-.666667\thinmuskip\relax\else\kern-.111111em\fi}
\def\vsk#1>{\vskip#1\baselineskip}
\def\vv#1>{\vadjust{\vsk#1>}\ignorespaces}
\def\vvn#1>{\vadjust{\nobreak\vsk#1>\nobreak}\ignorespaces}
 \let\alb\allowbreak
\def\plait#1{\par\hangindent2\parindent\indent\kern\parindent
\llap{#1\enspace}\ignorespaces}
\let\Smallskip\smallskip
\def\smallskip{\par\Smallskip}
\let\Medskip\medskip
\def\medskip{\par\Medskip}
\let\Bigskip\bigskip
\def\bigskip{\par\Bigskip}
\let\Maketitle\maketitle
\def\maketitle{\Maketitle\thispagestyle{empty}\let\maketitle\empty}
\newtheorem{thm}{Theorem}[section]
\newtheorem{cor}[thm]{Corollary}
\newtheorem{lem}[thm]{Lemma}
\newtheorem{prop}[thm]{Proposition}
\newtheorem{defn}[thm]{Definition}
\numberwithin{equation}{section}
\theoremstyle{definition}
\newtheorem*{rem}{Remark}
\def\beq{\begin{equation}}
\def\eeq{\end{equation}}
\def\be{\begin{equation*}}
\def\ee{\end{equation*}}
\def\bean{\begin{eqnarray}}
\def\eean{\end{eqnarray}}
\def\bea{\begin{eqnarray*}}
\def\eea{\end{eqnarray*}}
 \let\eps\varepsilon \let\epsilon\eps
\let\la\lambda
 \let\phi\varphi
\let\longto\longrightarrow
\let\ge\geqslant
\let\geq\geqslant
\let\le\leqslant
\let\leq\leqslant
\def\C{\Bbb C}
\def\Z{\Bbb Z}
\def\Dc{\cal D}
\def\hg{\frak h}
\def\gl{\frak{gl}}
\def\lsym#1{#1\alb\dots\relax#1\alb} \def\lc{\lsym,}
\let\on\operatorname
\def\End{\on{End}}
\def\rdet{\on{rdet}}
\def\Res{\on{Res}}
\def\Bh{\widehat B}
\def\Ch{\widehat C}
\def\Dh{\widehat D}
\def\Vh{\widehat V}
\def\Dt{\acute D}
\def\Dti{\widetilde D}
\def\ev{\text{\sl ev}}
\def\KZ/{{\sl KZ\/}}
\def\qKZ/{{\sl qKZ\/}}
\def\red{\mathrm{red}}
\def\aug{\mathrm{aug}}
\begin{document}

\hrule width0pt
\vsk->

\title[Duality for Bethe algebras]
{Duality for Bethe algebras acting on polynomials in anticommuting variables}

\author[V\<.\,Tarasov]{V\<.\,Tarasov$\:^\circ$}
\thanks{$\kern-\parindent^\circ$E\:-mail:
vtarasov@iupui.edu, vt@pdmi.ras.ru\\
$^\circ$Supported in part by Simons Foundation grant 430235 and
RFBR grant 18\:-\:01-\:00271\>}

\author[F\<.\,Uvarov]{F\<.\,Uvarov$\:^\star$}
\thanks{\noindent$^\star$E\:-mail: filuvaro@iu.edu}

\maketitle

\begin{center}
\vsk-.2>
{\it $^{\circ\:\star}\?$Department of Mathematical Sciences,
Indiana University\,--\>Purdue University Indianapolis\kern-.4em\\
402 North Blackford St, Indianapolis, IN 46202-3216, USA\/}

\medskip
{\it $^\circ\?$St.\,Petersburg Branch of Steklov Mathematical Institute\\
Fontanka 27, St.\,Petersburg, 191023, Russia\/}
\end{center}

\begin{abstract}
We consider actions of the current Lie algebras $\gl_{n}[t]$ and $\gl_{k}[t]$
on the space of polynomials in $kn$ anticommuting variables. The actions depend
on parameters $\bar{z}=(z_{1}\lc z_{k})$ and
$\bar{\alpha}=(\alpha_{1}\lc\alpha_{n})$, respectively.
We show that the images of the Bethe algebras
$\mathcal{B}_{\bar{\alpha}}^{\langle n \rangle}\subset U(\gl_{n}[t])$ and
$\mathcal{B}_{\bar{z}}^{\langle k \rangle}\subset U(\gl_{k}[t])$ under these
actions coincide. To prove the statement, we use the Bethe ansatz description
of eigenvalues of the actions of the Bethe algebras via spaces of
quasi-exponentials and establish an explicit correspondence between these
spaces for the actions of $\mathcal{B}_{\bar{\alpha}}^{\langle n \rangle}$
and $\mathcal{B}_{\bar{z}}^{\langle k \rangle}$.
\end{abstract}

\section{Introduction}
The classical $(\gl_{k},\gl_{n})$-duality plays an important role in the representation theory and the classical invariant theory,
for example, see \cite{H}, \cite{Zh}. It states the following. Let $e^{\langle n\rangle}_{ij}$, $i,j=1\lc n$, and $e^{\langle k\rangle}_{ab}$, $a,b=1\lc k$, be the standard generators of the Lie algebras $\gl_{n}$ and $\gl_{k}$, respectively. Define $\gl_{n}$- and $\gl_{k}$-actions on the space $P_{kn}=\C [x_{11}\lc x_{kn}]$ of polynomials in $kn$ variables:
\begin{equation}\label{act1}
e^{\langle n\rangle}_{ij}\mapsto\sum_{a=1}^{k}x_{ai}\frac{\partial}{\partial x_{aj}},
\end{equation}
\begin{equation}\label{act2}
e^{\langle k\rangle}_{ab}\mapsto\sum_{i=1}^{n}x_{ai}\frac{\partial}{\partial x_{bi}}.
\end{equation}
Then actions \eqref{act1} and \eqref{act2} commute and there is an isomorphism of $\gl_{k}\oplus\gl_{n}$-modules
\begin{equation}\label{module decomposition}
P_{kn}\cong\bigoplus_{\lambda}V_{\lambda}^{\langle n\rangle}\otimes V_{\lambda}^{\langle k\rangle},
\end{equation}
where $V_{\lambda}^{\langle n\rangle}$ and $V_{\lambda}^{\langle k\rangle}$ are the irreducible representations of $\gl_{n}$ and $\gl_{k}$ of highest weight $\lambda$, respectively.

It is interesting to study a similar duality in the context of current algebras, where the central role is played by the commutative subalgebras $\mathcal{B}_{\bar{\alpha}}^{\langle n \rangle}\subset U(\gl_{n}[t])$ and $\mathcal{B}_{\bar{z}}^{\langle k \rangle}\subset U(\gl_{k}[t])$, called the Bethe algebras. They depend on parameters $\bar{\alpha}=(\alpha_{1}\lc\alpha_{n})$ and $\bar{z}=(z_{1}\lc z_{k})$, respectively.
One can extend the actions of $\gl_{n}$ and $\gl_{k}$ on $P_{kn}$
to the respective $\gl_{n}[t]$- and $\gl_{k}[t]$-actions by the following formulas:
\begin{equation}\label{act3}
\psi_{\bar{z}}^{\langle n\rangle}: e^{\langle n\rangle}_{ij}\otimes t^{s}\mapsto\sum_{a=1}^{k}z_{a}^{s}x_{ai}\frac{\partial}{\partial x_{aj}},
\end{equation}
\begin{equation}\label{act4}
\psi_{\bar{\alpha}}^{\langle k\rangle}: e^{\langle k\rangle}_{ab}\otimes t^{s}\mapsto\sum_{i=1}^{n}\alpha_{i}^{s}x_{ai}\frac{\partial}{\partial x_{bi}}.
\end{equation}
Actions \eqref{act3} and \eqref{act4} do not commute anymore. However, the images of the subalgebras $\mathcal{B}_{\bar{\alpha}}^{\langle n\rangle}$ and $\mathcal{B}_{\bar{z}}^{\langle k\rangle}$ under the corresponding actions coincide, see \cite{MTV1}.

According to \cite{MTV2}, the Bethe ansatz method gives a bijection between eigenvectors of the action of $\mathcal{B}_{\bar{\alpha}}^{\langle n \rangle}$ on $P_{kn}$ and $n$-th order monic ordinary differential operators whose kernels are certain spaces of quasi-exponentials, see Section 3. Similarly, there is a bijection between eigenvectors of the action of $\mathcal{B}_{\bar{z}}^{\langle k \rangle}$ on $P_{kn}$ and $k$-th order monic ordinary differential operators of the same kind. Since the images of $\mathcal{B}_{\bar{\alpha}}^{\langle n \rangle}$ and $\mathcal{B}_{\bar{z}}^{\langle k \rangle}$ acting on $P_{kn}$ coincide and, therefore, have the same eigenvectors, this yields a bijection between the sets of the corresponding ordinary differential operators. It was conjectured in \cite{MTV3} that this bijection for differential operators is the bispectral duality. Namely, there is an automorphism $(\cdot)^{\ddagger}$ of the algebra of differential operators such that an eigenvector $v\in P_{kn}$ of the action $\psi_{\bar{z}}^{\langle n\rangle}$ of $\mathcal{B}_{\bar{\alpha}}^{\langle n\rangle}$ corresponding to a differential operator $D$ is an eigenvector of the action $\psi_{\bar{\alpha}}^{\langle k\rangle}$ of $\mathcal{B}_{\bar{z}}^{\langle k\rangle}$ corresponding to the differential operator $D^{\ddagger}$.

One can also study the $(\gl_{k},\gl_{n})$-duality for different representations. For example, instead of $P_{kn}$, one can consider the space $\mathfrak{P}_{kn}$ of polynomials in $kn$ anticommuting variables $\xi_{11}\lc\xi_{kn}$ with the actions of $\gl_{n}$ and $\gl_{k}$ similar to those \eqref{act1}, \eqref{act2} on $P_{kn}$. The analog of isomorphism \eqref{module decomposition} is given by:
\begin{equation*}
\mathfrak{P}_{kn}\cong\bigoplus_{\lambda}V^{\langle n\rangle}_{\lambda}\otimes V^{\langle k\rangle}_{\lambda '},
\end{equation*}
where $\lambda'$ denotes the conjugate partition, see \cite{CW}. Similarly to the case of $P_{kn}$, the $\gl_{n}$- and $\gl_{k}$-actions on $\mathfrak{P}_{kn}$ extend to $\gl_{n}[t]$- and $\gl_{k}[t]$-actions $\pi_{\bar{z}}^{\langle n\rangle}$ and $\pi_{-\bar{\alpha}}^{\langle k\rangle}$, depending on parameters $\bar{z}$ and $\bar{\alpha}$, respectively. The actions $\pi_{\bar{z}}^{\langle n\rangle}$ and $\pi_{-\bar{\alpha}}^{\langle k\rangle}$ are given essentially by the same formulas as for $\psi_{\bar{z}}^{\langle n\rangle}$ and $\psi_{\bar{\alpha}}^{\langle k\rangle}$, see Section 5. In this work, we will prove the equality of the images of the Bethe algebras $\mathcal{B}_{\bar{\alpha}}^{\langle n\rangle}$ and $\mathcal{B}_{\bar{z}}^{\langle k\rangle}$ under the actions $\pi_{\bar{z}}^{\langle n\rangle}$ and $\pi_{-\bar{\alpha}}^{\langle k\rangle}$, respectively, see Theorem \ref{main}.

Unlike \cite{MTV1}, our proof of Theorem \ref{main} is not straightforward. Instead, we explore the Bethe ansatz description of eigenvalues and eigenvectors of the actions $\pi_{\bar{z}}^{\langle n\rangle}$ of $\mathcal{B}_{\bar{\alpha}}^{\langle n\rangle}$ and $\pi_{-\bar{\alpha}}^{\langle k\rangle}$ of $\mathcal{B}_{\bar{z}}^{\langle k\rangle}$. Let $v\in\mathfrak{P}_{kn}$ be an eigenvector of the action $\pi_{\bar{z}}^{\langle n\rangle}$ of $\mathcal{B}_{\bar{\alpha}}^{\langle n\rangle}$ corresponding to a differential operator $D$. We introduce a transformation $D\to \Dti$, see formula \eqref{tilde D}, where $\Dti$ is another differential operator. This transformation can be naturally described using pseudodifferential operators, and the automorphism $(\cdot)^{\ddagger}$ appears as a step in the definition of this transformation. More explicit construction of $\Dti$ employs the so-called quotient differential operator, see Section 6. We prove that the vector $v\in\mathfrak{P}_{kn}$ is an eigenvector of the action $\pi_{-\bar{\alpha}}^{\langle k\rangle}$ of $\mathcal{B}_{\bar{z}}^{\langle k\rangle}$ corresponding to the differential operator $\Dti$, see Theorem \ref{main2}. This observation together with some results on eigenvalues and eigenvectors of the Bethe algebra from \cite{MTV2} allows us to prove Theorem \ref{main}.

There are important elements of the Bethe algebras called the Gaudin and Dynamical Hamiltonians. The exchange of the Gaudin and Dynamical Hamiltonians under the $(\gl_{k}, \gl_{n})$-duality for the space $P_{kn}$ was observed in \cite{TV}.
A similar result for the space $\mathfrak{P}_{kn}$ was obtained recently in \cite{TU}. The $(\gl_{k},\gl_{n})$-duality for the Gaudin and Dynamical Hamiltonians is an important step in our proof of Theorem \ref{main}.

The duality of the Bethe algebras of the Gaudin model considered in this paper is expected to extend to the duality of the Yangian Bethe algebras and the Bethe algebras of the trigonometric Gaudin model. This is currently work in progress. The construction of the quotient difference operator appearing in this generalization resembles the factorization of difference operators used in \cite{K} in connection with the combinatorial Gale transform introduced in \cite{MOST}.

In a recent paper \cite{HM}, the authors considered the duality of $\gl_{k}$ and $\gl_{m|n}$ Gaudin models and established a generalization of Theorem \ref{main} to that case. Their proof is similar to that of Capelli type identity in \cite{MTV1}. It would be interesting to extend our approach to the case of \cite{HM} to have an appropriate relation between differential and rational pseudodifferential operators.

The $(\gl_{k}, \gl_{n})$-duality for classical integrable models related to Gaudin Hamiltonians and the actions of $\gl_{k}$ and $\gl_{n}$ on the space of polynomials in anticommuting variables was studied in \cite[Section 3.3]{VY}. The result of \cite{VY} resembles the construction of the differential operator $\Dti$ discussed in our work.

The paper is organized as follows. In Section \ref{s2}, we introduce the algebra of pseudodifferential operators. In Section \ref{s3}, we describe the spaces of quasi-exponentials, define the transformation $D\to\Dti$, and formulate important properties of the kernel of $\Dti$, see Theorem \ref{1}.
We recall the definition of the Bethe algebra
of the Gaudin model
and some results
about its action on finite-dimensional irreducible representations
of the current Lie algebra
in Section \ref{s4}.
In Section \ref{s5}, we discuss the $(\gl_{k}, \gl_{n})$-duality for the space $\mathfrak{P}_{kn}$, formulate and prove the main result, Theorem \ref{main}. In Section \ref{s6}, we introduce the quotient differential operator and give a proof of Theorem \ref{1}.

\section{The Algebra of Pseudodifferential Operators}
\label{s2}
The algebra of pseudodifferential operators $\Psi\mathfrak{D}$
consists
of all formal series of the form $$\sum_{m=-\infty}^{M}\sum_{k=-\infty}^{K}C_{km}x^{k}\left(\frac{d}{dx}\right)^{m},$$ where integers $M$ and $K$
can differ
for different series, and $C_{km}$ are complex numbers. One can check that the rule
\begin{equation}\label{algebra relation}
\left(\frac{d}{dx}\right)^{m}x^{k}=\sum_{j=0}^{\infty}\frac{(m)_{j}(k)_{j}}{j!}
\;x^{k-j}\left(\frac{d}{dx}\right)^{m-j},
\qquad m,k\in\Z\,,\kern-2em
\end{equation}
where $\;(a)_{i}=a(a-1)(a-2)\dots (a-i+1)\,$,
yields a well-defined multiplication on $\Psi\mathfrak{D}$.
The verification of associativity is straightforward using the Chu-Vandermonde identity:
$$
\sum_{j=1}^{i}\frac{(m-n)_{j}}{j!}\cdot\frac{(n)_{i-j}}{(i-j)!}\,=\,
\frac{(m)_{i}}{i!}\;.
$$
\begin{lem}\label{invertible pseudodifferential operators}
If $\,D=\sum_{m=-\infty}^{M}\sum_{k=-\infty}^{K}C_{km}\,x^{k}(d/dx)^{m}$, with $\,C_{KM}\neq 0$, then $D$ is invertible in $\Psi\mathfrak{D}$.
\end{lem}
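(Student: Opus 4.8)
The plan is to construct an explicit two-sided inverse as a formal series in $\Psi\mathfrak{D}$, working order by order. The key observation is that $\Psi\mathfrak{D}$ carries a natural filtration by the "total degree" $k-m$ of a monomial $x^k(d/dx)^m$: by the multiplication rule \eqref{algebra relation}, the product $x^{k}(d/dx)^{m}\cdot x^{k'}(d/dx)^{m'}$ is a sum of terms $x^{k+k'-j}(d/dx)^{m+m'-j}$ with $j\ge 0$, each of total degree $(k-m)+(k'-m')-0$, wait---total degree $(k+k'-j)-(m+m'-j)=(k-m)+(k'-m')$, independent of $j$. So total degree is additive under multiplication, and $\Psi\mathfrak{D}$ is graded (not merely filtered) by $d=k-m\in\Z$. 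The leading term of $D$ is $C_{KM}x^{K}(d/dx)^{M}$, sitting in degree $K-M$.

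First I would normalize: since $x^{K}(d/dx)^{M}$ is itself invertible with inverse $(d/dx)^{-M}x^{-K}$ (using $M,K\in\Z$ and the formal rules), it suffices to invert $D' = (d/dx)^{-M}x^{-K}\cdot C_{KM}^{-1}D$, which has the form $1 + R$ where $R$ is a series all of whose monomials $x^{k}(d/dx)^{m}$ have $k\le 0$ and $m\le 0$ and---crucially---total degree $k-m \le 0$, with the degree-$0$ part of $R$ being a series $\sum_{m\le 0}c_m x^m (d/dx)^m$ lacking the constant term $1$ (that constant was split off). Here I need to check that the degree-$0$ part of $D'$ is genuinely of the form $1 + (\text{lower-order-in-}m\text{ stuff})$, i.e. that the grading piece $\{x^m(d/dx)^m\}$ is itself filtered by $m$ in a way compatible with multiplication so that a geometric-series argument converges.

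The main step is then to show $1+R$ is invertible via the Neumann series $\sum_{\ell\ge 0}(-R)^\ell$, and that this series is \emph{summable} in $\Psi\mathfrak{D}$, meaning: for each pair $(k,m)$, only finitely many $\ell$ contribute to the coefficient $C_{km}$ of $x^k(d/dx)^m$ in $(-R)^\ell$. This is where I expect the real work to be. One must track two things simultaneously under multiplication: the total degree $k-m$, which only decreases (strictly, for $R$, except possibly in degree $0$), and---within a fixed total degree---the top index $m$ (or $k$), which must also decrease strictly when we multiply by the relevant part of $R$. Concretely: writing $R = R_{<0} + R_0$ where $R_{<0}$ collects monomials of negative total degree and $R_0$ those of total degree $0$, a word in $R$'s of length $\ell$ contributing to a fixed monomial $x^k(d/dx)^m$ can use the factor $R_{<0}$ only boundedly many times (since total degree is bounded below once $k,m$ are fixed and each $R_{<0}$ drops it by at least $1$), and between/around those it uses powers of $R_0$; so one reduces to checking that $R_0$ alone is "topologically nilpotent" in the subalgebra of degree-$0$ elements, i.e. $R_0^\ell$ has $x^m(d/dx)^m$-coefficient zero for $\ell$ large relative to $|m|$. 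This last point follows because multiplying two degree-$0$ monomials $x^a(d/dx)^a \cdot x^b(d/dx)^b$ via \eqref{algebra relation} produces $\sum_j \binom{...}{} x^{a+b-j}(d/dx)^{a+b-j}$ with $a+b-j \le a+b < a$ when $b<0$ (recall all of $R_0$'s monomials have strictly negative exponents), so the top exponent strictly decreases at each multiplication, giving the needed finiteness.

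Having established summability, the formal series $D'^{-1} := \sum_{\ell\ge 0}(-R)^\ell$ lies in $\Psi\mathfrak{D}$, and associativity of $\Psi\mathfrak{D}$ (already granted) together with the standard telescoping identities $(1+R)\sum_{\ell=0}^{N}(-R)^\ell = 1 + (-R)^{N+1}$ shows, upon passing to the limit coefficient-wise, that $(1+R)D'^{-1} = D'^{-1}(1+R) = 1$. Then $D^{-1} = D'^{-1}\cdot (d/dx)^{-M}x^{-K}C_{KM}^{-1}$ is the desired inverse in $\Psi\mathfrak{D}$, completing the proof. The only genuine obstacle is the bookkeeping in the summability argument; everything else is formal manipulation with \eqref{algebra relation}.
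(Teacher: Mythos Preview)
Your overall strategy---normalize $D$ to $1+R$ with $R$ ``small'' and invert via the Neumann series $\sum_{\ell\ge 0}(-R)^\ell$---is exactly the paper's approach. The paper uses a slightly cleaner normalization, setting $1+\Dt=C_{KM}^{-1}\,x^{-K}D\,(d/dx)^{-M}$ (multiply by $x^{-K}$ on the left and $(d/dx)^{-M}$ on the right), which sends each monomial $C_{km}x^k(d/dx)^m$ of $D$ to $C_{KM}^{-1}C_{km}\,x^{k-K}(d/dx)^{m-M}$ with no cross terms.

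However, your summability argument has a genuine error. You assert that every monomial $x^k(d/dx)^m$ in $R$ has total degree $k-m\le 0$, and your entire bookkeeping (the split $R=R_{<0}+R_0$ and the claim that multiplying by $R$ can only lower total degree) rests on this. It is false. For instance, if $D$ contains the term $x^K(d/dx)^{M-1}$---perfectly legal---then after normalization $R$ contains a monomial with $(k,m)=(0,-1)$, hence $k-m=1>0$. More generally, $k-m$ is unbounded above on the monomials of $R$, since the definition of $\Psi\mathfrak{D}$ bounds $k$ and $m$ from above separately but imposes no relation between them. So your decomposition omits an entire piece $R_{>0}$, and since $k-m$ is additive you cannot bound it in $R^\ell$ at all.

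The fix is simple and makes the argument shorter than what you sketched: track $k+m$ instead of $k-m$. Every monomial of $\Dt$ (or of your $R$) has $k\le 0$, $m\le 0$, $(k,m)\ne(0,0)$, hence $k+m\le -1$. By the multiplication rule, $x^a(d/dx)^b\cdot x^c(d/dx)^d$ produces only monomials $x^{a+c-j}(d/dx)^{b+d-j}$ with $j\ge 0$, so $k+m$ is subadditive. Thus every monomial appearing in $\Dt^\ell$ has $k+m\le -\ell$, and for a fixed target $(k,m)$ only the finitely many $\ell\le -(k+m)$ can contribute. No splitting into degree pieces is needed; this is the one-line justification behind the paper's assertion that $\sum_{j}(-1)^j\Dt^j$ is well-defined.
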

\begin{proof}
Define $\Dt$ by the rule $\,1+\Dt=C_{KM}^{-1}\,x^{-K}D\,(d/dx)^{-M}$.
\vvn.1>
Then $\sum_{j=0}^{\infty}(-1)^{j}\Dt^{j}$ is a well-defined element of $\Psi\mathfrak{D}$ and the inverse of $D$ is given by the formula:
$$
D^{-1}=C_{KM}^{-1}\left(\frac{d}{dx}\right)^{-M}\left(\sum_{j=0}^{\infty}(-1)^{j}\Dt^{j}\right)x^{-K}.
\vvn-1.2>
$$
\end{proof}

We consider a formal series $\sum_{m=-\infty}^{M}f_{m}(x)(d/dx)^{m}$, where all $f_{m}(x)$ are rational functions, as an element of $\Psi\mathfrak{D}$ replacing each $f_{m}(x)$ by its Laurent series at infinity. In particular, we identify the algebra of linear differential operators with rational coefficients and the corresponding subalgebra of $\Psi\mathfrak{D}$.
Next corollary follows immediately from the Lemma \ref{invertible pseudodifferential operators}.

\begin{cor}\label{invertible2}
Let $D=\sum_{m=-\infty}^{M}f_{m}(x)(d/dx)^{m}$, where all $f_{m}(x)$ are
\vvn.1>
rational functions regular at infinity. Then
$D$ is invertible in $\Psi\mathfrak{D}$.
\end{cor}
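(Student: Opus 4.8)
The plan is to reduce the statement directly to Lemma~\ref{invertible pseudodifferential operators}. The key observation is that, for a rational function, being regular at infinity means precisely that its Laurent expansion at infinity involves no positive powers of $x$: if $f_m(x)=p_m(x)/q_m(x)$ with $\deg p_m\le\deg q_m$, then that expansion has the form $f_m(x)=\sum_{j=0}^{\infty}c_{jm}\,x^{-j}$, with $c_{0m}=\lim_{x\to\infty}f_m(x)$. By the convention fixed just before the corollary, this Laurent series is exactly the element of $\Psi\mathfrak{D}$ that $f_m(x)$ represents, so no additional compatibility needs to be checked.

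I would then substitute these expansions into $D=\sum_{m=-\infty}^{M}f_m(x)(d/dx)^m$ and reindex $k=-j$, which gives
\[
D\;=\;\sum_{m=-\infty}^{M}\ \sum_{k=-\infty}^{0}C_{km}\,x^{k}\,(d/dx)^{m},\qquad C_{km}:=c_{(-k)m}\,,
\]
visibly of the form treated in Lemma~\ref{invertible pseudodifferential operators}, with the upper bound on the powers of $x$ taken to be $K=0$. Its corner coefficient is $C_{0M}=c_{0M}=\lim_{x\to\infty}f_M(x)$. In the settings where the corollary is applied $f_M$ is a nonzero constant (indeed $D$ is monic), so $C_{0M}\neq0$, and Lemma~\ref{invertible pseudodifferential operators} immediately yields that $D$ is invertible in $\Psi\mathfrak{D}$.

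There is no serious obstacle here. The single point worth keeping in mind is that one does need the corner coefficient $C_{0M}=f_M(\infty)$ to be nonzero for Lemma~\ref{invertible pseudodifferential operators} to apply — that is, the leading coefficient of $D$ must not vanish at infinity — which holds automatically in all the uses made later. One should also note that the inverse produced by Lemma~\ref{invertible pseudodifferential operators} is a priori only an element of $\Psi\mathfrak{D}$ and need not itself be a differential operator; the corollary claims invertibility only within $\Psi\mathfrak{D}$, which is all that is required in the sequel.
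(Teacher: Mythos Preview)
Your proposal is correct and matches the paper's approach exactly: the paper simply states that the corollary ``follows immediately from Lemma~\ref{invertible pseudodifferential operators}'', and you have supplied the expected details of that reduction. Your observation that the reduction requires $f_M(\infty)\ne 0$ is accurate and worth noting; the paper leaves this implicit, and as you say it is satisfied in every application (where $D$ is monic).
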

Using \eqref{algebra relation}, one can check that for any complex numbers $C_{km}$, the series $$\sum_{m=-\infty}^{M}\sum_{k=-\infty}^{K}C_{km}\left(-\frac{d}{dx}\right)^{m}x^{k}$$ is a well-defined element of $\Psi\mathfrak{D}$. We define a map $(\cdot)^{\dagger}:\Psi\mathfrak{D}\to \Psi\mathfrak{D}$ by the rule
$$\left(\sum_{m=-\infty}^{M}\sum_{k=-\infty}^{K}C_{km}\,x^{k}\left(\frac{d}{dx}\right)^{m}\right)^{\dagger}=\sum_{m=-\infty}^{M}\sum_{k=-\infty}^{K}C_{km}\left(-\frac{d}{dx}\right)^{m}x^{k}.$$
\begin{lem}\label{antiauto}
The map $(\cdot)^{\dagger}$ is an involutive antiautomorphism of $\Psi\mathfrak{D}$.
\end{lem}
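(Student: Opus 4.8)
The plan is to verify the two defining properties separately: that $(\cdot)^{\dagger}$ is an involution, and that it reverses products, i.e. $(PQ)^{\dagger} = Q^{\dagger}P^{\dagger}$ for all $P, Q \in \Psi\mathfrak{D}$. Linearity of $(\cdot)^{\dagger}$ is immediate from the definition, so bijectivity will follow once we know $(\cdot)^{\dagger}$ is an involution. The proof of the involution property is the cleaner of the two and I would do it first: one must show that for every pair $k, m \in \Z$, the element $\left(-\tfrac{d}{dx}\right)^{m} x^{k}$, when re-expanded via \eqref{algebra relation} in the standard form $\sum_{j} C'_{k-j,\,m-j}\, x^{k-j}(d/dx)^{m-j}$ and then had $(\cdot)^{\dagger}$ applied again, returns to $x^{k}(d/dx)^{m}$. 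Here the key computation is that commuting $(d/dx)^{m}$ past $x^{k}$ and then $x^{k}$ back past $(d/dx)^{m}$ are mutually inverse operations on the level of the coefficient series; concretely, applying \eqref{algebra relation} twice in succession yields a double sum whose inner sum collapses by the Chu--Vandermonde identity already quoted in the text (the same identity used for associativity), leaving only the $j=0$ term. So $\bigl(x^k(d/dx)^m\bigr)^{\dagger\dagger} = x^k(d/dx)^m$, and by linearity and convergence of the relevant series $(\cdot)^{\dagger\dagger} = \id$ on all of $\Psi\mathfrak{D}$.

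For the antiautomorphism property, I would reduce to generators. Since $(\cdot)^{\dagger}$ is linear and continuous with respect to the natural filtration/topology on $\Psi\mathfrak{D}$ (finitely many terms of each order in $x$ and $d/dx$ up to a cutoff), it suffices to check $(PQ)^{\dagger} = Q^{\dagger} P^{\dagger}$ when $P = x^{a}(d/dx)^{b}$ and $Q = x^{c}(d/dx)^{e}$ are single monomials. Then $PQ = x^{a}(d/dx)^{b}x^{c}(d/dx)^{e}$, which one expands using \eqref{algebra relation} to get $\sum_{j \ge 0} \tfrac{(b)_j (c)_j}{j!}\, x^{a+c-j}(d/dx)^{b+e-j}$, and applying $(\cdot)^{\dagger}$ term by term gives $\sum_{j \ge 0} \tfrac{(b)_j (c)_j}{j!}\,(-d/dx)^{b+e-j} x^{a+c-j}$. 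On the other side, $Q^{\dagger}P^{\dagger} = (-d/dx)^{e}x^{c}(-d/dx)^{b}x^{a}$, and one must rewrite this into the same normal form. The cleanest way is to note that conjugation by the formal substitution $x \mapsto -d/dx$, $d/dx \mapsto x$ (which is exactly what $(\cdot)^\dagger$ does on monomials, up to the order reversal already built in) is compatible with the multiplication rule \eqref{algebra relation}: the relation $(d/dx)^m x^k = \sum_j \tfrac{(m)_j(k)_j}{j!} x^{k-j}(d/dx)^{m-j}$ is symmetric in the roles of $m$ and $k$, so after the order-reversal the two sides of the desired identity match coefficient by coefficient.

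The main obstacle I anticipate is purely bookkeeping rather than conceptual: keeping track of signs from the $(-d/dx)^m$ factors and making sure that the order-reversal in the definition of $(\cdot)^{\dagger}$ is what compensates for the fact that \eqref{algebra relation} moves $d/dx$ to the \emph{left} of $x$ whereas in $Q^\dagger P^\dagger$ the factors appear in the opposite arrangement. A slick way to finesse all of this is to observe that $(\cdot)^{\dagger}$ is the composition of the naive "reverse the word and send $x \mapsto x$, $d/dx \mapsto -d/dx$" antihomomorphism on the free algebra with the quotient map onto $\Psi\mathfrak{D}$, and then check only that this descends to the quotient, i.e. that the relation \eqref{algebra relation} is preserved; the symmetry of the structure constants $(m)_j(k)_j/j!$ under $m \leftrightarrow k$ makes that a one-line check. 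I would present the proof in that form, stating the free-algebra description, reducing to preservation of \eqref{algebra relation}, and invoking the $m \leftrightarrow k$ symmetry, with the involution property handled by the Chu--Vandermonde collapse described above.
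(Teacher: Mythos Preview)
Your plan is correct and matches the paper's argument closely: both reduce everything to the identity
\[
\sum_{j\ge 0}(-1)^{j}\,\frac{(k)_{j}(m)_{j}}{j!}\left(\frac{d}{dx}\right)^{m-j}\!x^{k-j}
\,=\,x^{k}\left(\frac{d}{dx}\right)^{m},
\]
obtained by applying \eqref{algebra relation} twice. Two small corrections. First, the inner sum that collapses here is $\sum_{j=0}^{i}(-1)^{j}/\bigl(j!\,(i-j)!\bigr)=\delta_{i0}$, which is just the binomial expansion of $(1-1)^{i}$; it is not the Chu--Vandermonde identity quoted for associativity. Second, in your antiautomorphism check the phrase ``the $m\leftrightarrow k$ symmetry makes it a one-line check'' oversells that symmetry: when you apply the order-reversing map to relation \eqref{algebra relation} and test whether the image holds in $\Psi\mathfrak{D}$, you must re-expand each $(d/dx)^{m-j}x^{k-j}$ via \eqref{algebra relation}, and the resulting double sum collapses by exactly the same alternating identity used for the involution. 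The paper acknowledges this dependence explicitly, deriving the antiautomorphism property from \eqref{algebra relation} together with the displayed involution identity rather than from symmetry alone. With those two points adjusted, your write-up will coincide with the paper's proof.
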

\begin{proof}
To check that $(\cdot)^{\dagger}$ is involutive,
we need to verify that $\left(\!\left(x^{k}(d/dx)^{m}\right)^{\dagger}\right)^{\dagger}=x^{k}(d/dx)^{m}$.
By \eqref{algebra relation}, it reads as
\begin{equation}\label{4}
\sum_{j=0}^{\infty}\,(-1)^{j}\,\frac{(k)_{j}(m)_{j}}{j!} \left(\frac{d}{dx}\right) ^{m-j}x^{k-j}=x^{k}\left(\frac{d}{dx}\right)^{m}.
\end{equation}
The equality holds since
\vvn->
$$
\sum_{j=0}^{i}\frac{(-1)^{j}}{j!(i-j)!}=\delta_{i0}\,.
$$
Using \eqref{algebra relation} and \eqref{4},
one can check that $(\cdot)^{\dagger}$ is an antiautomorphism as well.
\end{proof}

We also define the following involutive antiautomorphism on $\Psi \mathfrak{D}$:
$$\left(\sum_{m=-\infty}^{M}\sum_{k=-\infty}^{K}C_{km}\,x^{k}\left(\frac{d}{dx}\right)^{m}\right)^{\ddagger}=\sum_{m=-\infty}^{M}\sum_{k=-\infty}^{K}C_{km}\,x^{m}\left(\frac{d}{dx}\right)^{k}.$$

For $D\in \Psi\mathfrak{D}$, we say that $D^{\dagger}$ is the \textit{formal conjugate} to $D$ and $D^{\ddagger}$ is the \textit{bispectral dual} to $D$.
Let $D^{\#}=\left(D^{\dagger}\right)^{\ddagger}$.
\begin{lem}
The
map
$(\cdot)^{\#}$ is an automorphism on $\Psi\mathfrak{D}$ of order 4
\end{lem}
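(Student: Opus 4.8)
The plan is to exhibit $(\cdot)^{\#}$ as a composition of two antiautomorphisms and then to read off its order from its action on the two basic symbols $x$ and $d/dx$. For the first part: by Lemma~\ref{antiauto} the map $(\cdot)^{\dagger}$ is an antiautomorphism of $\Psi\mathfrak{D}$, the map $(\cdot)^{\ddagger}$ is an antiautomorphism by construction, and each is well defined on arbitrary formal series; hence $(\cdot)^{\#}=\bigl((\cdot)^{\dagger}\bigr)^{\ddagger}$, being a composition of two antiautomorphisms, is an automorphism of $\Psi\mathfrak{D}$.

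To determine the order, I would first compute $(\cdot)^{\#}$ on $x=x^{1}(d/dx)^{0}$ and $d/dx=x^{0}(d/dx)^{1}$. Directly from the definitions one gets $x^{\dagger}=x$ and $(d/dx)^{\dagger}=-d/dx$, and therefore $x^{\#}=x^{\ddagger}=d/dx$ and $(d/dx)^{\#}=(-d/dx)^{\ddagger}=-x$. Thus $(\cdot)^{\#}$ sends the pair $(x,\,d/dx)$ to $(d/dx,\,-x)$, so applying it twice sends $x\mapsto -x$ and $d/dx\mapsto -d/dx$, and applying it four times fixes both $x$ and $d/dx$. Since $(\cdot)^{\#}$ is an automorphism and $x$ and $d/dx$ are invertible elements of $\Psi\mathfrak{D}$, this yields $\bigl(x^{k}(d/dx)^{m}\bigr)^{\#\#}=(-1)^{k+m}x^{k}(d/dx)^{m}$, and hence $\bigl(x^{k}(d/dx)^{m}\bigr)^{\#\#\#\#}=x^{k}(d/dx)^{m}$, for every $k,m\in\Z$. (Equivalently, combining the defining formulas with \eqref{algebra relation} one obtains the closed form $\bigl(x^{k}(d/dx)^{m}\bigr)^{\#}=(-1)^{m}(d/dx)^{k}x^{m}$, which can simply be iterated.)

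To finish, note that the monomials $x^{k}(d/dx)^{m}$, $k,m\in\Z$, span $\Psi\mathfrak{D}$, and that $(\cdot)^{\dagger}$, $(\cdot)^{\ddagger}$, and hence $(\cdot)^{\#\#\#\#}$, act on a formal series coefficientwise through \emph{finite} linear combinations: after reduction to standard form via \eqref{algebra relation}, only finitely many monomials can contribute to a prescribed $x^{a}(d/dx)^{b}$, since in \eqref{algebra relation} the exponents of $x$ and of $d/dx$ decrease simultaneously while the original exponents are bounded above. Consequently $(\cdot)^{\#\#\#\#}=\id$, whereas $(\cdot)^{\#\#}\neq\id$ since $x^{\#\#}=-x$ and $(\cdot)^{\#}\neq\id$ since $x^{\#}=d/dx$; as the order of $(\cdot)^{\#}$ divides $4$, it equals $4$. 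The step I expect to require the most care is this last one — checking that the passage to standard form does not accumulate infinitely many monomials onto a single one, so that controlling $(\cdot)^{\#\#\#\#}$ on all monomials really controls it on all of $\Psi\mathfrak{D}$; the rest is a routine computation with \eqref{algebra relation}.
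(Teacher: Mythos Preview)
Your proof is correct and follows essentially the same approach as the paper: the paper also observes that $(\cdot)^{\#}$ is a composition of two antiautomorphisms and then computes $\bigl(\bigl(x^{k}(d/dx)^{m}\bigr)^{\#}\bigr)^{\#}=(-x)^{k}(-d/dx)^{m}$ to conclude the order is $4$. You are simply more explicit about deriving this from the action on the generators $x$ and $d/dx$ and about extending from monomials to arbitrary formal series, points the paper leaves implicit.
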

\begin{proof}
The
map
$(\cdot)^{\#}$ is an automorphism
because
it is a composition of two
antiautomorphisms.
Since $\left(\!{\left(x^{k}(d/dx)^{m}\right)^{\#}}\right)^{\!\#}=
(-x)^{k}\left(-d/dx\right)^{m}$, the map $(\cdot)^{\#}$ has order $4$.
\end{proof}

\section{The spaces of quasi-exponentials}
\label{s3}
In this paper, a partition $\mu=(\mu_{1},\mu_{2},\dots{})$ is an infinite nonincreasing sequence of nonnegative integers stabilizing at zero. Let $\mu'=(\mu'_{1},\mu'_{2},\dots{})$ denote the conjugate partition,
that is, $\mu'_{i}=\#\{j\;\vert\;\mu_{j}\geq i\}$.
In particular,
$\mu_{1}'$ equals the number of nonzero entries in $\mu$.

Fix complex numbers $\alpha_{1}\lc\alpha_{n}$ and nonzero partitions
$\mu^{(1)}\lc\mu^{(n)}$. Assume that $\alpha_{i}\neq\alpha_{j}$ for $i\neq j$.
Let $V$ be a vector space of functions in one variable with a basis
$\{q_{ij}(x)e^{\alpha_{i}x}\;|\alb\;i=1\lc n,\alb j=1\lc (\mu^{(i)})'_{1}\}$,
where $q_{ij}(x)$ are polynomials and $\deg q_{ij} =
(\mu^{(i)})'_{1}+\mu^{(i)}_{j}-j$.

Denote $M'=\sum_{i=1}^n(\mu^{(i)})'_{1}=\dim V$. For $z\in\C$, define \textit{the sequence of exponents of $V$ at $z$} as
a unique sequence of integers $\boldsymbol{e}=\{e_{1}>\ldots>e_{M'}\}$,
with the property:
for each $i=1\lc M'$, there exists $f\in V$ such that $f(x)=(x-z)^{e_i}\bigl(1+o(1)\bigr)$ as $x\to z$.

We say that $z\in\C$ is a singular point of $V$ if the set of exponents of $V$ at $z$ differs from the set $\{0\lc M'-1\}$. A space of quasi-exponentials has finitely many singular points.

Let $z_{1}\lc z_{k}$ be all singular points of $V$ and let $\boldsymbol{e}^{(a)}=\{e^{(a)}_{1}>\ldots>e^{(a)}_{M'}\}$ be the set of exponents of $V$ at $z_{a}$. For each $a=1\lc k$, define a partition $\lambda^{(a)}=(\lambda^{(a)}_{1},\lambda^{(a)}_{2},\dots{})$ as follows: $e^{(a)}_{i}=M'+\lambda^{(a)}_{i}-i$ for $i=1\lc M'$, and $\lambda^{(a)}_{i}=0$ for $i>M'$.
Clearly, all partitions $\lambda^{(1)}\!\lc\la^{(k)}$ are nonzero.

Denote the sequences $(\mu^{(1)}\lc\mu^{(n)})$, $(\lambda^{(1)}\lc\lambda^{(k)})$, $(\alpha_{1}\lc\alpha_{n})$, $(z_{1}\lc z_{k})$ as $\bar{\mu}$, $\bar{\lambda}$, $\bar{\alpha}$, $\bar{z}$, respectively. We will say that $V$ is a \textit{space of quasi-exponentials with the data $(\bar{\mu},\bar{\lambda} ;\bar{\alpha},\bar{z})$.}

For arbitrary sequences of partitions $\bar\mu=(\mu^{(1)}\lc\mu^{(n)})$,
$\bar\lambda=(\lambda^{(1)}\lc\lambda^{(k)})$, and sequences of complex numbers
$\bar\alpha=(\alpha_{1}\lc\alpha_{n})$, $\bar z=(z_{1}\lc z_{k})$, define
the data $(\bar{\mu},\bar{\lambda};\alb\bar{\alpha},\alb\bar{z})^{\red}$
by removing all zero partitions from the sequences $\bar\mu\,,\bar\lambda$
and the corresponding numbers from the sequences $\bar\alpha\,,\bar z$.
We will call the data
$(\bar{\mu},\bar{\lambda};\bar{\alpha},\bar{z})$ \textit{reduced\/} if
$(\bar{\mu},\bar{\lambda};\bar{\alpha},\bar{z})=(\bar{\mu},\bar{\lambda};\bar{\alpha},\bar{z})^{\red}$.

We will say that $V$ is a \textit{space of quasi-exponentials with the data\/}
$(\bar{\mu},\bar{\lambda};\bar{\alpha},\bar{z})$
if $V$ is a space of quasi-exponentials with the data
$(\bar{\mu},\bar{\lambda};\bar{\alpha},\bar{z})^{\red}$.

\textit{The fundamental operator of\/} $\,V$ is a unique monic linear differential operator of order $M'$ annihilating $V$. Denote the fundamental operator of $V$ by $D_V$.

%%<<
\smallskip
Define \,$D_V^{\aug}=D_V\,\prod_{i=1,\,\mu^{(i)}=0}^n(d/dx-\alpha_i)$. We will
say that the space $V^{\aug}=\ker D_V^{\aug}$ is the \textit{augmentation
of $\,V\!$ with the data} $(\bar{\mu},\bar{\lambda};\bar{\alpha},\bar{z})$\,,
and the space $V$ is the \textit{reduction of} $\,V^{\aug}$.
Clearly, \,$V=\prod_{i=1,\,\mu^{(i)}=0}^n(d/dx-\alpha_i)\,V^{\aug}$.

\begin{lem}\label{coefficients}
The coefficients of $\,D_V$ and $D_V^{\aug}$ are rational functions in $\,x$ regular at infinity.
\end{lem}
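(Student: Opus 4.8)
The plan is to prove the statement for $D_V$ first; the case of $D_V^{\aug}=D_V\prod_{i=1,\,\mu^{(i)}=0}^{n}(d/dx-\alpha_i)$ then follows because the differential operators with rational coefficients regular at infinity form a subalgebra of $\Psi\mathfrak D$ (products and $x$-derivatives of rational functions regular at infinity are again regular at infinity), and each factor $d/dx-\alpha_i$ has constant coefficients. For $D_V$ itself, recall that $V$ is by definition built from the reduced data, so we may assume all $\mu^{(i)}$ are nonzero; set $m_i=(\mu^{(i)})'_1$. Choose a basis $f_1,\dots,f_{M'}$ of $V$ consisting of quasi-exponentials $q_{ij}(x)e^{\alpha_i x}$ and use the Wronskian formula $D_V u=\Wr(f_1,\dots,f_{M'},u)/\Wr(f_1,\dots,f_{M'})$. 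Expanding the numerator along its last column, the coefficient of $(d/dx)^l$ in $D_V$ equals $\pm\,\Wr_l/\Wr(f_1,\dots,f_{M'})$, where $\Wr_l$ is the determinant of the derivatives of $f_1,\dots,f_{M'}$ of orders $\{0,\dots,M'\}\setminus\{l\}$. Since every derivative of $q_{ij}e^{\alpha_i x}$ is $e^{\alpha_i x}$ times a polynomial, both $\Wr(f_1,\dots,f_{M'})$ and $\Wr_l$ equal $e^{(\sum_i m_i\alpha_i)x}$ times a polynomial; the exponentials cancel and $D_V$ has rational coefficients. What remains is to show each $\pm\,\Wr_l/\Wr$ is regular at infinity, i.e. that the numerator polynomial has degree at most that of the denominator polynomial.

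I would prove this regularity by induction on the number $n$ of distinct exponents, carrying along the auxiliary claim that the coefficients of $D_V$ have finite limits at infinity with symbol at infinity $\sum_l\bigl(\lim_{x\to\infty}(\text{coeff of }(d/dx)^l\text{ in }D_V)\bigr)\xi^l=\prod_{i=1}^n(\xi-\alpha_i)^{m_i}$. The extra claim is needed because the symbol at infinity is multiplicative on products of operators that are regular at infinity, and its value at a new exponent controls leading behaviour in the inductive step. For $n=1$, the substitution $d/dx\mapsto d/dx+\alpha_1$ turns $D_V$ into the fundamental operator $D_U$ of the span $U$ of the polynomials $q_{11},\dots,q_{1m_1}$, whose degrees $d_{1j}=m_1+\mu^{(1)}_j-j$ are pairwise distinct (this uses $\mu^{(1)}$ nonzero and nonincreasing), and this substitution preserves both "regular at infinity" and the symbol at infinity up to the shift $\xi\mapsto\xi-\alpha_1$. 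Writing the coefficients of $D_U$ as Wronskian ratios as above and using $\deg q_b^{(s)}=d_{1b}-s$, one gets $\deg\Wr_l\le\sum_b d_{1b}-\binom{m_1+1}{2}+l$, while $\deg\Wr(q_{11},\dots,q_{1m_1})=\sum_b d_{1b}-\binom{m_1}{2}$ exactly, its leading coefficient being a nonzero multiple of a Vandermonde determinant in the distinct numbers $d_{1b}$. Hence the coefficient of $(d/dx)^l$ has degree at infinity $\le l-m_1<0$ for $l<m_1$, which gives the base case together with its symbol at infinity $\xi^{m_1}$, i.e. $(\xi-\alpha_1)^{m_1}$ for $D_V$.

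For the inductive step, let $V'$ be the span of the quasi-exponentials attached to $\alpha_1,\dots,\alpha_{n-1}$ and $V_n$ that of those attached to $\alpha_n$; then $V=V'\oplus V_n$, $V_n\cap\ker D_{V'}=0$ (distinct exponents), so $\widehat V:=D_{V'}(V_n)$ is $m_n$-dimensional and $D_{\widehat V}D_{V'}$, being monic of order $m_n+(M'-m_n)=M'$ and annihilating $V$, equals $D_V$. By the inductive hypothesis $D_{V'}$ is rational, regular at infinity, with symbol at infinity $\prod_{i<n}(\xi-\alpha_i)^{m_i}$, which is nonzero at $\xi=\alpha_n$. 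Consequently $\widehat V=e^{\alpha_n x}\,\mathrm{span}(\widehat q_1,\dots,\widehat q_{m_n})$ with $\widehat q_j:=e^{-\alpha_n x}D_{V'}\!\bigl(q_{nj}e^{\alpha_n x}\bigr)$ a rational function regular at infinity whose degree at infinity is exactly $d_{nj}:=\deg q_{nj}$, with leading term $\prod_{i<n}(\alpha_n-\alpha_i)^{m_i}\,\mathrm{lc}(q_{nj})\,x^{d_{nj}}$ (only the zeroth-order term of the shifted operator $D_{V'}$ contributes to top degree, and its limit at infinity is $\prod_{i<n}(\alpha_n-\alpha_i)^{m_i}\neq 0$), and the $d_{nj}$ are pairwise distinct. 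Running the base-case Wronskian estimate for $D_{\widehat V}$ — the estimate uses only that the $\widehat q_j$ have distinct degrees at infinity with nonzero leading coefficients, not that they are polynomials — shows $D_{\widehat V}$ is regular at infinity with symbol at infinity $(\xi-\alpha_n)^{m_n}$; multiplicativity then gives that $D_V=D_{\widehat V}D_{V'}$ is regular at infinity with symbol $\prod_{i=1}^n(\xi-\alpha_i)^{m_i}$, closing the induction.

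The main obstacle is the regularity at infinity, and within it the necessity of propagating the symbol at infinity: the crude estimate $\deg\Wr_l\le\sum_l\deg f_l$ is useless as soon as some $m_i\ge 2$ (the confluent case, where the naive leading terms of the Wronskian cancel), and the entire purpose of organizing the proof through the factorization $D_V=D_{\widehat V}D_{V'}$ is to encounter the confluence one exponent at a time, each time reducing to the clean polynomial/rational degree estimate of the base case; that this reduction does not degenerate hinges precisely on $\prod_{i<n}(\alpha_n-\alpha_i)^{m_i}\neq 0$, i.e. on $\alpha_n$ being genuinely new, which is what prevents the leading coefficients of the $\widehat q_j$ from vanishing.
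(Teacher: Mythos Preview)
Your proof is correct. Both you and the paper begin the same way: reduce to the reduced data, invoke the Wronskian formula $a_i(x)=(-1)^i W_i/W$ for the coefficients of $D_V$, and observe that the exponentials $e^{\alpha_i x}$ factor out identically from numerator and denominator, yielding rationality. The paper then simply \emph{asserts} that each ratio $W_i/W$ is regular at infinity and moves on; no degree estimate or further argument is given. You, by contrast, supply a complete proof of this regularity via induction on the number $n$ of distinct exponents, factoring $D_V=D_{\widehat V}D_{V'}$ and propagating the symbol at infinity $\prod_i(\xi-\alpha_i)^{m_i}$ through the induction. Your approach is genuinely more explicit: it isolates precisely why the naive degree bound fails in the confluent case (several $q_{ij}$ sharing an exponent) and repairs it by peeling off one exponent at a time, using the nonvanishing of the symbol at the new exponent to guarantee that the degrees of the transformed functions $\widehat q_j$ do not drop. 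What the paper's one-line proof buys is brevity, treating the regularity as a known fact about quasi-exponential Wronskians; what your argument buys is a self-contained justification together with the extra information about the symbol at infinity, which, though not needed for Lemma~\ref{coefficients} itself, is in the spirit of the later computations in Section~\ref{s6}.
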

%%>>

\noindent
The lemma will be proved in Section \ref{s6}.

\smallskip
Recall that we identify the algebra of linear differential operators with
rational coefficients and the corresponding subalgebra of $\Psi\mathfrak{D}$.

Let $V$ be a \textit{space of quasi-exponentials with the data}
$(\bar{\mu},\bar{\lambda};\bar{\alpha},\bar{z})$. By Lemma \ref{coefficients}
and Corollary \ref{invertible2}, the operator $D_V$ is an invertible element of
$\Psi\mathfrak{D}$. Consider the following pseudodifferential operator:
\begin{equation}\label{tilde D}
\Dti_V=(-1)^{M'}\prod_{i=1}^{n}(x+\alpha_{i})^{(\mu^{(i)})'_{1}}\left(D_V^{-1}\right)^{\#}\prod_{a=1}^{k}\left(\frac{d}{dx}-z_{a}\right)^{\lambda^{(a)}_{1}}.
\end{equation}
Clearly, $\Dti_V$ depends only on the reduced data $(\bar{\mu},\bar{\lambda};\bar{\alpha},\bar{z})^{\red}$.

\begin{thm}\label{1}
The following holds:
\begin{enumerate}
\item\label{1i}
$\Dti_V$ is a monic differential operator of order $\,L=\sum_{a=1}^{k}\lambda^{(a)}_{1}$.
\item\label{2i}
The vector space
\vvn.06>
$\,\tilde{V}=\ker\Dti_V$ is a space of quasi-exponentials with the data $(\bar{\lambda}',\bar{\mu}';\alb\bar{z},\alb-\bar{\alpha})$, where
\vvn.1>
$\,\bar{\mu}'=((\mu^{(1)})'\lc (\mu^{(n)})')$, $\,\bar{\lambda}'=((\lambda^{(1)})'\lc(\lambda^{(k)})')$ and $-\bar{\alpha} = (-\alpha_{1}\lc -\alpha_{n})$.
%%<<
\item\label{3i}
Let $\,b_{ij}$ and $\,\tilde{b}_{st}$ be the coefficients of
%% the expansions
\;$D_V^{\aug}$ and
\;$\Dti_V^{\aug}=\Dti_V\,\prod_{a=1,\,\lambda^{(a)}=0}^k(d/dx-z_a)$\,:
$$
D_V^{\aug}=\sum_{i=0}^{M'}\sum_{j=0}^{\infty}\,b_{ij}x^{-j}\left(\frac{d}{dx}\right)^{M'-i},\qquad
\Dti_V^{\aug}=\sum_{s=0}^{L}\sum_{t=0}^{\infty}\,\tilde{b}_{st}x^{-t}\left(\frac{d}{dx}\right)^{L-s}.\kern-2em
$$
Then there are polynomials \,$P_{st}$ in variables $r_{ij}\, $, $i=0\lc M'\?$,
$j\geq0$, depending only on the data
$(\bar{\mu},\bar{\lambda};\bar{\alpha},\bar{z})$, such that
\,$\tilde{b}_{st}\?$ equals the value of \,$P_{st}$ under the substitution
$\,r_{ij}=b_{ij}$ for all $\,i,j\,$. Moreover, the coefficients of \,$P_{st}$
are polynomials in \,$\bar{\alpha},\bar{z}$.
%% Then there are polynomials $P_{st}$ in variables $r_{ij}$, $i=0\lc M'$, $j\geq 0$, such that $\tilde{b}_{st}$ is the value of $P_{st}$
%% under the substitution
%% $\,r_{ij}=b_{ij}$ for all $\,i,j$. Moreover, the polynomials $P_{st}$ are the same for all spaces of quasi-exponentials with the fixed data $(\bar{\mu},\bar{\lambda};\bar{\alpha},\bar{z})$, and the coefficients of $P_{st}$ depend polynomially on $\bar{\alpha}$ and $\bar{z}$.
%%>>
\end{enumerate}
\end{thm}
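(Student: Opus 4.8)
The plan is to prove the three parts of Theorem~\ref{1} essentially simultaneously, by reducing everything to an explicit computation with the pseudodifferential operator $\Dti_V$ in \eqref{tilde D}, and then deferring the two genuinely hard points --- that $\Dti_V$ is actually a \emph{differential} operator (not merely a pseudodifferential one) and that its kernel has the claimed quasi-exponential structure --- to the quotient-operator machinery announced for Section~\ref{s6}. First I would analyze the structure of $D_V$ as an element of $\Psi\mathfrak{D}$: working with the augmented operator $D_V^{\aug}$, whose kernel $V^{\aug}$ has basis $\{q_{ij}(x)e^{\alpha_i x}\}$ over \emph{all} $i=1\lc n$ (including those with $\mu^{(i)}=0$, contributing the single function $e^{\alpha_i x}$), I would record that $D_V^{\aug}$ is monic of order $\sum_i(\mu^{(i)})'_1 + \#\{i:\mu^{(i)}=0\}=\colon N$, with coefficients rational and regular at infinity by Lemma~\ref{coefficients}. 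Then $D_V^{-1}$ is computed via Lemma~\ref{invertible pseudodifferential operators}, and I would track how the antiautomorphism $(\cdot)^{\#}=(\cdot)^{\dagger}$ followed by $(\cdot)^{\ddagger}$ acts. The key accounting identity is that $(\cdot)^{\ddagger}$ interchanges the roles of $x$ and $d/dx$, so it converts the exponential-type singularity structure at the points $\bar\alpha$ (encoded in the leading behavior of $D_V$ in the $x$-direction) into the polynomial-degree data at $\bar z$, and vice versa; this is exactly the mechanism that produces the data swap $(\bar\mu,\bar\lambda;\bar\alpha,\bar z)\rightsquigarrow(\bar\lambda',\bar\mu';\bar z,-\bar\alpha)$.

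For part~\eqref{1i}, the order count: $D_V^{-1}$ has order $-M'$ in $\Psi\mathfrak{D}$, the prefactor $\prod_i(x+\alpha_i)^{(\mu^{(i)})'_1}$ is a polynomial of degree $M'$, and $(\cdot)^{\#}$ sends order $-M'$ to order $-M'$ while the $x$-degree-$M'$ factor becomes a $(d/dx)$-operator of order $M'$ after the $\ddagger$; combined with the right factor $\prod_a(d/dx-z_a)^{\lambda^{(a)}_1}$ of order $L=\sum_a\lambda^{(a)}_1$, a careful bookkeeping of $(\cdot)^{\#}$ applied to $(D_V^{-1})$ shows the negative powers of $d/dx$ are exactly cancelled and one is left with a monic operator of order $L$. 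The monicity I would verify by chasing leading coefficients through each of the three factors, using that $D_V$ is monic and that $(\cdot)^{\#}$ preserves the top symbol up to an explicit sign $(-1)^{M'}$ --- which is precisely why that sign appears in \eqref{tilde D}. But establishing that \emph{all} the negative-order terms vanish, not just that the order is $\le L$, is not a formal consequence of degree counting; this is the first place the quotient differential operator construction of Section~\ref{s6} is needed, and I expect this to be one of the two main obstacles.

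For part~\eqref{2i}, once $\Dti_V$ is known to be a monic differential operator, its kernel is a $\dim=L$ space of functions, and I would identify it by a direct duality argument: the bispectral dual $(\cdot)^{\ddagger}$ interchanges solutions of the form $x^a e^{bx}$ with $x^b e^{ax}$ at the level of formal symbols, so the quasi-exponential with exponents $\alpha_i$ and polynomial blocks governed by $\mu^{(i)}$ and singular data $\lambda^{(a)}$ at $z_a$ gets transformed into one with exponents $z_a$, blocks governed by $(\lambda^{(a)})'$, and singular data $(\mu^{(i)})'$ at $-\alpha_i$. The conjugation of partitions appears because passing from $D$ to $D^{\dagger}$ (formal adjoint) replaces a space of quasi-exponentials by the one whose exponent sets are complementary, and complementation of a ``staircase'' exponent set is exactly partition conjugation; I would make this precise by comparing the exponents of $V$ at each finite point and at infinity with those of $\tilde V$, using the explicit degree formula $\deg q_{ij}=(\mu^{(i)})'_1+\mu^{(i)}_j-j$ and the analogous formula for $\tilde V$. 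The sign in $-\bar\alpha$ comes from the $(\cdot)^{\dagger}$ step, which sends $d/dx\mapsto -d/dx$ and hence $e^{\alpha x}\mapsto e^{-\alpha x}$.

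Finally, part~\eqref{3i} is the assertion that the expansion coefficients $\tilde b_{st}$ of $\Dti_V^{\aug}$ at infinity are \emph{universal polynomials} in the coefficients $b_{ij}$ of $D_V^{\aug}$, with coefficients polynomial in $\bar\alpha,\bar z$. This I would deduce by unwinding formula \eqref{tilde D} into an explicit algebraic recipe in $\Psi\mathfrak{D}$: (a) the inversion $D_V^{-1}$ is given by the Neumann-type series of Lemma~\ref{invertible pseudodifferential operators}, each coefficient of which is a polynomial in the $b_{ij}$ (this uses that $D_V^{\aug}$, not $D_V$, has a nice monic normalization --- passing between $D_V$ and $D_V^{\aug}$ only multiplies by the fixed factor $\prod_{i:\mu^{(i)}=0}(d/dx-\alpha_i)$, whose coefficients are polynomials in $\bar\alpha$); (b) the operations $(\cdot)^{\dagger}$, $(\cdot)^{\ddagger}$ and multiplication by the explicit polynomial/differential prefactors in \eqref{tilde D} act on Laurent-at-infinity coefficients by \eqref{algebra relation}, which has rational (indeed polynomial-in-the-data) structure constants; and (c) the passage $\Dti_V\to\Dti_V^{\aug}$ is again multiplication by a fixed operator with coefficients polynomial in $\bar z$. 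The one subtlety is that a priori step (a) produces an \emph{infinite} series whose coefficients could involve $b_{ij}$ for unboundedly large $j$ in forming any fixed $\tilde b_{st}$; I would argue that, because $\Dti_V$ turns out to be a differential operator of the fixed order $L$ (part~\eqref{1i}) with coefficients regular at infinity, only finitely many $b_{ij}$ enter each $\tilde b_{st}$, and the resulting expression is polynomial. Making this finiteness and polynomiality explicit --- rather than merely rational --- is the second main obstacle, and it is exactly where the quotient differential operator of Section~\ref{s6} does the decisive work by giving $\Dti_V^{\aug}$ directly as a differential operator built polynomially from $D_V^{\aug}$. The whole argument is then assembled by invoking that section's construction for the two hard closure statements and filling in the symbol- and exponent-chasing for the rest.
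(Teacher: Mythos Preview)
Your proposal correctly identifies that the two hard points are (a) showing $\Dti_V$ is genuinely a differential operator and (b) identifying its kernel, and you are right that the quotient-operator machinery of Section~\ref{s6} is what does the work. But the route you sketch differs from the paper's in a way that matters.

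The paper never computes $D_V^{-1}$ via a Neumann series. Instead it introduces the constant-coefficient operator $\Dh=\prod_i(d/dx-\alpha_i)^{\mu^{(i)}_1+(\mu^{(i)})'_1}$ and defines $\check D_V$ by $\Dh=\check D_V D_V$; since $\ker D_V\subset\ker\Dh$, the quotient $\check D_V$ is a \emph{differential} operator from the outset, with coefficients satisfying the triangular recurrence $c_j=a_j-\sum_{i<j}\sum_l c_{i-l}\,b_{j-i}^{(l)}$. One then checks the algebraic identity
\[
\Bigl(\prod_a(x-z_a)^{\lambda^{(a)}_1}\,\check D_V^{\dagger}\Bigr)^{\!\ddagger}
=(-1)^M\prod_i(x+\alpha_i)^{\mu^{(i)}_1}\,\Dti_V,
\]
so $\Dti_V$ inherits the differential-operator property from $\check D_V$, and no cancellation of negative powers of $d/dx$ needs to be argued. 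Part~\eqref{2i} then follows in two steps: first Theorem~\ref{3} (proved via Wronskian combinatorics) identifies $\ker\check D_V^{\dagger}$ as a space of quasi-exponentials with data $(\bar\mu',\bar\lambda';-\bar\alpha,\bar z)$, and second the bispectral-duality theorem of \cite{MTV3} (quoted, not reproved) carries this to $(\bar\lambda',\bar\mu';\bar z,-\bar\alpha)$ under $(\cdot)^{\ddagger}$. Your ``direct duality argument'' for part~\eqref{2i} is really a restatement of this MTV3 result, which is substantial and not something one derives by symbol-chasing alone.

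For part~\eqref{3i}, the paper uses the chain
\[
D_V^{\aug}\to D_V\to\check D_V\to\check D_V^{\dagger}\to\prod_a(x-z_a)^{\lambda^{(a)}_1}\check D_V^{\dagger}\to D_V^{\times}\to\Dti_V\to\Dti_V^{\aug},
\]
and checks that each arrow is given by an explicit polynomial formula in the expansion coefficients, with coefficients polynomial in $\bar\alpha,\bar z$. This sidesteps entirely the finiteness worry you flag for the Neumann-series approach: there is no infinite sum to truncate, because $\check D_V$ is obtained by a finite recurrence rather than by inverting $D_V$. Your route via $D_V^{-1}$ can in principle be made to work (each Laurent coefficient of $D_V^{-1}$ is indeed a polynomial in finitely many $b_{ij}$), but it is more delicate and you end up deferring to Section~\ref{s6} anyway, which makes the argument circular. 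The paper's quotient-operator path is both what Section~\ref{s6} actually contains and the cleaner way through.
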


\noindent
The theorem will be proved in Section \ref{s6}.

\section{Bethe algebra}
\label{s4}
\subsection{Universal differential operator}
The current algebra $\gl_{n}[t]=\gl_{n}\otimes\C[t]$ is the Lie algebra of $\gl_{n}$-valued polynomials with pointwise commutator.
We identify the Lie algebra $\gl_{n}$ with the subalgebra $\gl_{n}\otimes 1$ of
constant polynomials in $\gl_{n}[t]$.

\vsk.2>
For each $g\in\gl_{n}$,
let $\,g(x)=\sum_{s=0}^{\infty}(g\otimes t^{s})\,x^{-s-1}$.
\vvn.1>
It is a formal power series in $x^{-1}$ with coefficients in $\gl_{n}[t]$.

For an $n\times n$ matrix $A$ with possibly noncommuting entries $a_{ij}$, its row determinant is
$$\rdet A =\sum_{\sigma\in S_{n}}(-1)^{\sigma}a_{1\sigma (1)}a_{2\sigma (2)}\dots a_{n\sigma (n)}.$$

Let $e_{ij}$, $i,j=1\lc n$, be the standard generators of the Lie algebra
$\gl_{n}$ satysfying the relations
$\,[e_{ij},e_{kl}]=\delta_{jk}e_{il}-\delta_{il}e_{kj}$.
Denote by $\mathfrak{h}$ the Cartan subalgebra of $\gl_{n}$ spanned by
the generators $e_{11}\lc e_{nn}$.

Fix $\bar{\alpha} =(\alpha_{1}\lc\alpha_{n})$, a sequence of pairwise distinct
complex numbers. Define the \textit{universal differential operator}
$\Dc_{\bar{\alpha}}$ by the formula
$$
\Dc_{\bar{\alpha}}=\rdet\left(\left(\frac{d}{dx}-\alpha_{i}\right)\delta_{ij}-
e_{ji}(x)\right)_{i,j=1}^{n}.
$$
It is a differential operator in the variable $x$ whose coefficients are formal power series in $x^{-1}$ with coefficients in $U(\gl_{n}[t])$,
\begin{equation}\label{B}
\Dc_{\bar{\alpha}}=\left(\frac{d}{dx}\right)^{n}+\sum_{i=1}^{n}B_{i}(x)\left(\frac{d}{dx}\right)^{n-i},
\end{equation}
where
\begin{equation}\label{B_{ij}}
B_{i}(x)=\sum_{j=0}^{\infty}B_{ij}x^{-j}.
\end{equation}
and $B_{ij}\in U(\gl_{n}[t])$ for $i=1\lc n$, $j\geq 0$.
Notice that $\,\sum_{i=0}^nB_{i0}u^{n-i}=\prod_{j=1}^n(u-\alpha_j)\,$.

\begin{defn}
The subalgebra $\mathcal{B}_{\bar{\alpha}}$ of $\,U(\gl_{n}[t])$ generated by $B_{ij}$,
$\,i=1\lc n$,
$\,j\ge 1$,
is called the Bethe algebra.
\end{defn}

The proof of the following theorem can be found in \cite{MTV4}.
\begin{thm}
The algebra $\mathcal{B}_{\bar{\alpha}}$ is commutative. The algebra $\mathcal{B}_{\bar{\alpha}}$ commutes with the subalgebra $U(\mathfrak{h})\subset U(\gl_{n}[t])$.
\end{thm}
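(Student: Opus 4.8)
The two assertions are of rather different character; I would dispose of the commutation with $U(\mathfrak{h})$ by a weight argument and prove commutativity by recognizing $\Dc_{\bar\alpha}$ as a Talalaev-type ``quantum spectral curve'' built from a Manin matrix. For the first part: the Cartan subalgebra $\mathfrak{h}$ acts on $U(\gl_{n}[t])$ by the adjoint action, and $e_{ab}\ox t^{s}$ has weight $\epsilon_{a}-\epsilon_{b}$. Since the $(i,j)$ entry of the matrix defining $\Dc_{\bar\alpha}$ is $(d/dx-\alpha_{i})\delta_{ij}-e_{ji}(x)$ and $x,d/dx,\alpha_{i}$ are $\mathfrak{h}$-invariant, each monomial $\prod_{i=1}^{n}M_{i\si(i)}$ entering $\rdet$ has total weight $\sum_{i}(\epsilon_{\si(i)}-\epsilon_{i})=0$. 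Hence every $B_{ij}$ is a sum of weight-zero elements, so $\on{ad}h$ (a derivation) annihilates it for all $h\in\mathfrak{h}$; as $\mathfrak h$ generates $U(\mathfrak h)$ and the $B_{ij}$ generate $\mathcal B_{\bar\alpha}$, this gives $[U(\mathfrak h),\mathcal B_{\bar\alpha}]=0$.

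\textbf{Commutativity.} Consider the matrix $G(x)=\bigl((d/dx-\alpha_{i})\delta_{ij}-e_{ji}(x)\bigr)_{i,j=1}^{n}$ with entries in the algebra generated by $U(\gl_{n}[t])$ and $\Psi\mathfrak{D}$, working in the completion $U(\gl_{n}[t])[[x^{-1}]][d/dx]$ so that the series $B_{i}(x)$ make sense. The plan is to verify that $G(x)$ is a Manin matrix with respect to its rows, i.e. that within each row the entries satisfy the appropriate commutation relations and that the cross-commutator identities $[G_{ij},G_{kl}]=[G_{kj},G_{il}]$ hold. The key computation is that along row $i$ the commutator of the diagonal entry with an off-diagonal one vanishes: using $[e_{ii},e_{ji}]=-e_{ji}$ for $j\neq i$ and $[d/dx,e_{ji}(x)]=(d/dx)\bigl(e_{ji}(x)\bigr)$, the contribution of $d/dx$ exactly cancels the $\gl_{n}[t]$-commutator coefficient by coefficient in $x^{-1}$; the $\alpha_{i}$ are central and drop out. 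Once $G(x)$ is known to be a (Talalaev-type) Manin matrix, the commutativity of the coefficients of $\rdet G(x)$ — that is, of the $B_{ij}$, read off as coefficients of $x^{-j}(d/dx)^{n-i}$ — follows from the standard commutativity theorem for such matrices (Talalaev's quantum spectral curve / Chervov--Falqui). One then checks that each $B_{ij}$ is a genuine finite element of $U(\gl_{n}[t])$, so the conclusion descends from the completion to $U(\gl_{n}[t])$ itself.

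\textbf{Alternatives and the main obstacle.} Two equivalent routes are: (i) realize $\mathcal B_{\bar\alpha}$ as the image in $U(\gl_{n}[t])$ of the (commutative) Feigin--Frenkel center of $\widehat{\gl}_{n}$ at the critical level, twisted by the diagonal element $\diag(\alpha_{1}\lc\alpha_{n})$, so that commutativity is inherited from centrality; and (ii) obtain $\mathcal B_{\bar\alpha}$ as a suitable limit of the Bethe subalgebra of the Yangian $Y(\gl_{n})$ with diagonal twist, whose commutativity follows from the \textit{RTT} relation together with the centrality of the quantum determinant. In every approach the essential input is the same $r$-matrix-type identity. I expect the main obstacle to be the bookkeeping needed to make that identity rigorous in the presence of the operator $d/dx$ and the formal power series in $x^{-1}$: namely, verifying precisely the Manin-matrix relations for $G(x)$ and carefully extracting from $\rdet G(x)$ the statement that the individual coefficients $B_{ij}$, rather than merely the generating series, pairwise commute.
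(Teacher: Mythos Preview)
Your proposal is sound, but you should know that the paper does not actually prove this theorem: it simply states ``The proof of the following theorem can be found in \cite{MTV4}'' and moves on. So there is nothing to compare against in the present paper itself.

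That said, what you sketch is essentially the content of the cited reference and the surrounding literature (Talalaev, Chervov--Falqui, Chervov--Talalaev). Your weight argument for $[U(\mathfrak h),\mathcal B_{\bar\alpha}]=0$ is exactly right: the $(i,j)$ entry carries weight $\epsilon_j-\epsilon_i$, so every monomial in the row determinant has weight zero. For commutativity, the Manin-matrix route you describe is indeed the standard one; your sample computation $[G_{ii},G_{ij}]=0$ checks out (the derivative term $-e_{ji}'(x)$ cancels the current-algebra commutator $[e_{ii}(x),e_{ji}(x)]$ coefficient by coefficient). One small slip of phrasing: the Manin condition is about \emph{column} commutation, $[G_{ij},G_{kj}]=0$, together with the cross relation $[G_{ij},G_{kl}]=[G_{kj},G_{il}]$, not row commutation; you would need to verify the full set of these identities, not just the diagonal-versus-off-diagonal case you wrote down. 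The alternative routes you mention (image of the Feigin--Frenkel center at the critical level, or limit of a Yangian Bethe subalgebra) are also correct and well documented. None of this is a gap, just the expected bookkeeping you already flagged.
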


\subsection{Action of the Bethe algebra in a tensor product of evaluation modules.}
\label{s42}
For $a\in\C$, let $\rho_{a}$ be the automorphism of $\gl_{n}[t]$ such that
$\rho_{a}:g(x)\mapsto g(x-a)$. Given a $\gl_{n}[t]$-module $M$, we denote by $M(a)$ the pullback of $M$ through the automorphism $\rho_{a}$.

Let $\ev:\gl_{n}[t]\to\gl_{n}$ be the evaluation homomorphism,
$\ev:g(x)\mapsto gx^{-1}$. For any $\gl_{n}$-module $M$, we denote by the same letter the $\gl_{n}[t]$-module, obtained by pulling $M$ back through the evaluation homomorphism. For each $a\in\C$ and $\gl_{n}$-module $M$, the $\gl_{n}[t]$- module $M(a)$ is called an \textit{evaluation module}.

For each $\lambda=(\lambda_{1}\lc\lambda_{n})\in\C^{n}$
and an $\hg$-module $M$,
we denote by $(M)_{\lambda}$ the weight subspace of $M$ of weight $\lambda$. Note that any partition $\lambda$ with $\lambda_{n+1}=0$ can be considered as an element of $\C^{n}$.

Let $M$ be a $\gl_{n}[t]$-module. As a subalgebra of $U(\gl_{n}[t])$, the algebra $\mathcal{B}_{\bar{\alpha}}$ acts on $M$. Since $\mathcal{B}_{\bar{\alpha}}$ commutes with $U(\mathfrak{h})$, it preserves the weight subspaces $(M)_{\lambda}$.

Given a
$\mathcal B_{\bar\alpha}$-module $M$,
a subspace $H\subset M$ is called an eigenspace of $\mathcal{B}_{\bar{\alpha}}$-action on $M$ if there is a homomorphism $\xi:\mathcal{B}_{\bar{\alpha}}\to\C\,$ such that $H=\bigcap_{F\in\mathcal{B}_{\bar{\alpha}}}\ker\bigl(F-\xi(F)\bigr)$.

Denote by $L_{\lambda}$ the irreducible finite-dimensional $\gl_{n}$-module with highest weight $\lambda$. Fix $\,\mu=(\mu_{1}\lc\mu_{n})\in\Z_{\geq 0}^{n}$, $\,\bar{\alpha}=(\alpha_{1}\lc\alpha_{n})\in\C^{n}$ such that $\alpha_{i}\neq\alpha_{j}$ for $i\neq j$, $\,\bar{z}=(z_{1}\lc z_{k})\in\C^{k}$ such that $z_{a}\neq z_{b}$ for $a\neq b$, and a sequence of partitions $\,\bar{\lambda}=(\lambda^{(1)}\lc\lambda^{(k)})$. Define the sequence of partitions $\bar{\mu}=(\mu^{(1)}\lc\mu^{(n)})$ setting $\mu^{(i)}=(\mu_{i},0,0,\dots{})$.
The next theorem states the results from \cite{MTV2} needed for the present paper.
\begin{thm}\label{2}
Consider a tensor product \,$L_{\bar\lambda}(\bar z)=
L_{\lambda^{(1)}}(z_{1})\otimes\ldots\otimes L_{\lambda^{(k)}}(z_{k})$
of evaluation $\gl_n[t]$-modules. Then the following holds.
\begin{enumerate}
\item Each eigenspace of the action of $\;\mathcal{B}_{\bar{\alpha}}$ on $\bigl(L_{\bar\lambda}(\bar z)\bigr)_{\!\mu}$ is one-dimensional.
\item\label{i2}
For generic $\bar{\alpha}$ and $\bar{z}$, the action of $\;\mathcal{B}_{\bar{\alpha}}$ on $\bigl(L_{\bar\lambda}(\bar z)\bigr)_{\!\mu}$ is diagonalizable.
\item\label{i3}
Let $v\in\bigl(L_{\bar\lambda}(\bar z)\bigr)_{\!\mu}$ be an
eigenvector of the action of $\;\mathcal{B}_{\bar{\alpha}}$. Then there exist
rational functions $\,b_1(x)\lc b_n(x)$, such that $B_{i}(x)v=b_{i}(x)v$ for all $\,i=1\lc n$,
and the kernel
of the differential operator $\;D=(d/dx)^{n}+\sum_{i=1}^{n}b_{i}(x)(d/dx)^{n-i}$
is the augmentation of a space of quasi-expo\-nen\-tials with the data
$(\bar{\mu},\bar{\lambda};\bar{\alpha},\bar{z})$.
\item\label{i4}
The correspondence between eigenspaces of the action of
$\;\mathcal{B}_{\bar{\alpha}}$ on $\bigl(L_{\bar\lambda}(\bar z)\bigr)_{\!\mu}$
and spaces of quasi-expo\-nen\-tials with the data
$(\bar{\mu},\bar{\lambda};\bar{\alpha},\bar{z})$ given in part \eqref{i3} is
bijective.
\end{enumerate}
\end{thm}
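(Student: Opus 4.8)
\textbf{Proof proposal for Theorem \ref{2}.}

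The plan is to read off all four parts from the Bethe ansatz machinery of \cite{MTV2} applied to the tensor product of evaluation modules $L_{\bar\lambda}(\bar z)$, after matching notation. The essential input is that the Bethe algebra $\mathcal B_{\bar\alpha}$ acts on the weight space $\bigl(L_{\bar\lambda}(\bar z)\bigr)_{\!\mu}$ through the coefficients $B_{ij}$ of the universal differential operator $\mathcal D_{\bar\alpha}$, and that on a joint eigenvector these coefficients take scalar values $b_{ij}$, which assemble into a scalar monic differential operator $D=(d/dx)^n+\sum_i b_i(x)(d/dx)^{n-i}$. The content of \cite{MTV2} is that this operator is \emph{fuchsian} with prescribed singularities at $z_1,\dots,z_k$ and at $\infty$, with exponents controlled by $\bar\lambda$ and $\mu$ respectively, and that moreover it has trivial monodromy, so its kernel is a space of functions of the required quasi-exponential type after the augmentation by $\prod_{i:\mu^{(i)}=0}(d/dx-\alpha_i)$.

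Concretely I would proceed as follows. First, identify $\bigl(L_{\bar\lambda}(\bar z)\bigr)_{\!\mu}$ with the relevant weight space and recall from \cite{MTV2} the reproduction procedure / Bethe ansatz equations producing eigenvectors; the eigenvector-to-operator map sends a Bethe vector to the differential operator annihilating the associated Wronskian-type space. Part (3) is then the statement that the kernel of $D$ is $V^{\aug}$ for a space of quasi-exponentials $V$ with data $(\bar\mu,\bar\lambda;\bar\alpha,\bar z)$: the exponents of $D$ at the finite singular point $z_a$ are dictated by $\lambda^{(a)}$ (shifted by $0,1,\dots,L'-1$), the exponents at $\infty$ encode $\mu$, and the exponential factors $e^{\alpha_i x}$ together with the degrees of the polynomial parts come from the $\bar\alpha$-dependence of $\mathcal D_{\bar\alpha}$ and the choice of weight $\mu$ (the entries $\mu_i=0$ being exactly those for which an extra factor $(d/dx-\alpha_i)$ must be appended to pass from $D_V$ to $D_V^{\aug}$). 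Part (1), one-dimensionality of each eigenspace, follows because the eigenvalue (equivalently the operator $D$, equivalently the space $V$) determines the Bethe vector uniquely up to scalar — this is the simplicity statement of \cite{MTV2}, proved there via the completeness of the Bethe ansatz and a dimension count matching the number of spaces of quasi-exponentials with the given data to $\dim\bigl(L_{\bar\lambda}(\bar z)\bigr)_{\!\mu}$. Part (2), diagonalizability for generic $(\bar\alpha,\bar z)$, follows since for generic parameters the spaces of quasi-exponentials with data $(\bar\mu,\bar\lambda;\bar\alpha,\bar z)$ are all distinct and the Bethe ansatz is complete, so $\mathcal B_{\bar\alpha}$ has a full system of eigenvectors with distinct eigenvalues on that weight space. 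Part (4) packages parts (1) and (3) together with completeness: the map sending an eigenspace to its space of quasi-exponentials is injective by (1) (distinct eigenspaces give distinct scalar tuples $b_{ij}$, hence distinct operators $D$, hence distinct $V$) and surjective because \cite{MTV2} realizes every space of quasi-exponentials with the given data as arising from some Bethe vector.

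The main obstacle is not a new computation but the translation: one must verify that the data $(\bar\mu,\bar\lambda;\bar\alpha,\bar z)$ in the present normalization — with $\mu^{(i)}=(\mu_i,0,0,\dots)$ a one-row partition, and with the augmentation convention $D_V^{\aug}=D_V\prod_{i:\mu^{(i)}=0}(d/dx-\alpha_i)$ — matches exactly the combinatorial data (highest weights of the tensor factors, evaluation points, weight subspace, and the ``continuous'' parameters $\bar\alpha$) appearing in the statements of \cite{MTV2}, including the correct identification of exponents at $\infty$ in terms of the partition $\mu$ and of the polynomial degrees $\deg q_{ij}=(\mu^{(i)})'_1+\mu^{(i)}_j-j$. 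Once this dictionary is fixed, each of the four parts is a direct quotation of the corresponding result in \cite{MTV2}.
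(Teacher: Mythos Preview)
Your proposal is correct and matches the paper's treatment: the paper does not give an independent proof of this theorem at all, but simply states it as a summary of results from \cite{MTV2} (``The next theorem states the results from \cite{MTV2} needed for the present paper''). Your outline --- that each of parts (1)--(4) is read off from the Bethe ansatz machinery of \cite{MTV2} after fixing the dictionary between the data $(\bar\mu,\bar\lambda;\bar\alpha,\bar z)$, the augmentation convention, and the conventions of \cite{MTV2} --- is exactly the intended justification, and your identification of the translation of conventions as the only real content is accurate.
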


\subsection{Gaudin and Dynamical Hamiltonians}
\label{s43}
For $g\in\gl_{n}$, define
$g_{(a)}=1^{\otimes(a-1)}\!\otimes g\otimes 1^{\otimes(k-a)}\!
\in U(\gl_{n})^{\otimes k}$.
We will use the same notation for an element
of $U(\gl_{n})$ and its image under the
diagonal embedding
$\,g\mapsto\sum_{a=1}^{k}(g)_{(a)}\in U(\gl_{n})^{\otimes k}$. Let
$\Omega_{(ab)}=\sum_{i,j=1}^{n}(e_{ij})_{(a)}\,(e_{ji})_{(b)}\,$.

\smallskip
For sequences of pairwise distinct numbers $\bar{\alpha}=(\alpha_{1}\lc\alpha_{n})$ and $\bar{z}=(z_{1}\lc z_{k})$,
define the following elements of $U(\gl_{n})^{\otimes k}$:
$$
H_{a}(\bar{\alpha},\bar{z})=\sum_{i=1}^{n}\alpha_{i}(e_{ii})_{(a)}+
\sum_{\substack{b=1\\b\neq a}}^{k}\frac{\Omega_{(ab)}}{z_{a}-z_{b}}\;,\qquad
G_{i}(\bar{\alpha},\bar{z})=\sum_{a=1}^{k}z_{a}(e_{ii})_{(a)}+
\sum_{\substack{j=1\\j\neq i}}^{n}\frac{e_{ij}e_{ji}-e_{ii}}{\alpha_{i}-\alpha_{j}}\;.
$$
The elements $H_{1}(\bar{\alpha},\bar{z})\lc H_{k}(\bar{\alpha},\bar{z})$ are called the Gaudin Hamiltonians.
The elements $G_{1}(\bar{\alpha},\bar{z})\lc G_{n}(\bar{\alpha},\bar{z})$ are called the Dynamical Hamiltonians.

\smallskip
Consider
an algebra homomorphism $\ev_{\bar z}: U(\gl_{n}[t])\to U(\gl_{n})^{\otimes k}$, given by
$$\ev_{\bar z}: g\otimes t^{s}\,\mapsto\,\sum_{a=1}^{k}\,g_{(a)}z_{a}^{s}\,.$$
For each $i=1\lc n$, let $\Bh_{i}(x)$ be the image of the series $B_{i}(x)$,
see \eqref{B},
under the map $\ev_{\bar z}$.
\vv.1>
The series $\Bh_{i}(x)$ is a formal power series in $x^{-1}$ with coefficients in $U(\gl_{n})^{\otimes k}$. There
exists
a rational function of the form $\sum_{a=1}^{k}\sum_{j=0}^{i}\Bh_{ija}(x-z_{a})^{-j}$,
where
$\Bh_{ija}\in U(\gl_{n})^{\otimes k}$, such that $\Bh_{i}(x)$ is the Laurent series of this function as $x\to\infty$. We will identify the series $\Bh_{i}(x)$ and this rational function.

Let $\Ch_{j}(u)$, $\,j\in\Z_{\geq 0}$, be rational functions in $u$ defined by the formula
$$
\prod_{i=1}^{n}(u-\alpha_{i})\,\sum_{j=0}^{\infty}\Ch_{j}(u)x^{-j}\>=\,
u^{n}+\sum_{i=1}^{n}\Bh_{i}(x)u^{n-i}\>.
$$

\begin{lem}\label{KZinB}
The following holds:
$$
H_{a}(\bar{\alpha},\bar{z})\,=\,\Res_{x=z_{a}}
\Bigl(\frac{\Bh_{1}^{2}(x)}2-\Bh_{2}(x)\<\Bigr)\,,
\qquad
G_{i}(\bar{\alpha},\bar{z})=\Res_{u=\alpha_{i}}
\Bigl(\frac{\Ch_{1}^{2}(u)}2-\Ch_{2}(u)\<\Bigr)\,.
$$
\end{lem}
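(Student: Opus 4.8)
The plan is to prove the two identities by direct residue computations, using the explicit form of the universal differential operator $\Dc_{\bar\alpha}$ and the homomorphism $\ev_{\bar z}$. First I would work out $\Bh_1(x)$ and $\Bh_2(x)$ explicitly. From the row-determinant expansion of $\Dc_{\bar\alpha}=\rdet\bigl((d/dx-\alpha_i)\delta_{ij}-e_{ji}(x)\bigr)$ and $g(x)=\sum_{s\ge0}(g\ox t^s)x^{-s-1}$, one reads off that $B_1(x)=-\sum_{i=1}^n\alpha_i-\sum_{i=1}^n e_{ii}(x)$ and that $B_2(x)$ is the sum over $i<j$ of the $2\times2$ row-minors; applying $\ev_{\bar z}$ turns $e_{ii}(x)$ into $\sum_{a=1}^k (e_{ii})_{(a)}(x-z_a)^{-1}$, so $\Bh_1(x)$ has simple poles only at the $z_a$ with residue $-\sum_i(e_{ii})_{(a)}$, plus the constant term $-\sum_i\alpha_i$. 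Then $\Res_{x=z_a}\bigl(\Bh_1^2(x)/2\bigr)$ picks up both the cross term between the pole at $z_a$ and the pole at each $z_b$ ($b\ne a$), giving $\sum_{b\ne a}\bigl(\sum_i(e_{ii})_{(a)}\bigr)\bigl(\sum_j(e_{jj})_{(b)}\bigr)/(z_a-z_b)$, and the cross term between the pole at $z_a$ and the constant $-\sum_i\alpha_i$, giving $\sum_i\alpha_i\sum_j(e_{jj})_{(a)}$. One then checks that $-\Res_{x=z_a}\Bh_2(x)$ supplies exactly the off-diagonal part $\sum_{b\ne a}\sum_{i\ne j}(e_{ij})_{(a)}(e_{ji})_{(b)}/(z_a-z_b)$ needed to complete $\Omega_{(ab)}$, and also cancels the spurious $\sum_i\alpha_i\sum_{j\ne?}$ corrections so that the $e_{ii}$-term matches $\sum_i\alpha_i(e_{ii})_{(a)}$ in $H_a(\bar\alpha,\bar z)$. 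Care must be taken with the ordering of noncommuting factors and with the residue of $\Bh_1^2$ versus $(\ev_{\bar z}B_1)^2$, since $\Bh_1(x)$ and its value involve products of operators at possibly equal sites; the symmetrization by the factor $1/2$ is what makes this work out.

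The second identity, for $G_i(\bar\alpha,\bar z)$, is proved by the same mechanism but with the roles of the two pairs of parameters interchanged, applied to the functions $\Ch_j(u)$. By definition $u^n+\sum_i\Bh_i(x)u^{n-i}=\prod_i(u-\alpha_i)\sum_j\Ch_j(u)x^{-j}$; expanding in $x^{-1}$, the coefficient of $x^0$ is $\prod_i(u-\alpha_i)$ (consistent with $\Bh_{i0}$ giving $\prod_j(u-\alpha_j)$), and the coefficient of $x^{-1}$ gives $\prod_i(u-\alpha_i)\Ch_1(u)=\sum_i\Bh_{i1}u^{n-i}$, i.e. $\Ch_1(u)=\bigl(\sum_i\Bh_{i1}u^{n-i}\bigr)/\prod_i(u-\alpha_i)$; similarly $\Ch_2(u)$ is expressed through $\Bh_{i1},\Bh_{i2}$ and $\prod_i(u-\alpha_i)$. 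I would compute $\Bh_{i1}$ and $\Bh_{i2}$ from the row-determinant: $\Bh_{i1}(x)=\ev_{\bar z}(B_{i1})$ is a sum of $(e\cdots e)$-products together with the linear-in-$z$ contributions $\sum_a z_a(e_{\cdot\cdot})_{(a)}$ coming from the $x^{-2}$ terms in $g(x)$. Plugging these into $\Ch_1(u)$, one sees $\Ch_1(u)$ has a simple pole at $u=\alpha_i$ with residue $-\bigl(\sum_a(e_{ii})_{(a)}\bigr)$ and the coefficient linear in $z$ produces, after taking $\Res_{u=\alpha_i}\bigl(\Ch_1^2(u)/2-\Ch_2(u)\bigr)$, exactly $\sum_a z_a(e_{ii})_{(a)}+\sum_{j\ne i}(e_{ij}e_{ji}-e_{ii})/(\alpha_i-\alpha_j)$. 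The residue of $\Ch_2$ is again responsible for turning the naive product $e_{ii}e_{jj}$ (the cross term of poles at $\alpha_i,\alpha_j$ in $\Ch_1^2$) into $e_{ij}e_{ji}-e_{ii}$ via the $\gl_n$ relations.

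The main obstacle I anticipate is bookkeeping rather than conceptual: keeping track of noncommutativity in the products $\Bh_1(x)^2$ and $\Ch_1(u)^2$ (the operators live in $U(\gl_n)^{\ox k}$ and do not commute across sites when indices coincide), and correctly extracting the double poles versus simple poles when computing the residues. In particular, showing that the diagonal ($b=a$, resp. $j=i$) contributions combine correctly — the $-\Bh_2$ (resp.\ $-\Ch_2$) term must absorb the "self'' part of $\Bh_1^2/2$ and reorganize it into the Casimir-type combination $\Omega_{(ab)}$ (resp.\ into $e_{ij}e_{ji}-e_{ii}$) — requires using $[e_{ij},e_{ji}]=e_{ii}-e_{jj}$ and the commutativity of entries acting on different tensor factors. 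Once the explicit power-series coefficients $\Bh_{i}(x)$ and $\Ch_j(u)$ are in hand, both identities reduce to matching the pole data at $x=z_a$, resp.\ $u=\alpha_i$, which is a finite check.
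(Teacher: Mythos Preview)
Your plan is correct and is exactly what the paper does: its entire proof reads ``The proof is straightforward.'' Your outline of extracting $\Bh_1,\Bh_2$ from the row-determinant, applying $\ev_{\bar z}$ to get simple poles at the $z_a$, and then matching the residues of $\Bh_1^2/2-\Bh_2$ (and analogously $\Ch_1^2/2-\Ch_2$) against the defining formulas for $H_a$ and $G_i$ is precisely the intended direct verification; note only that $\Bh_1(x)$ involves just the commuting diagonal elements $(e_{ii})_{(a)}$, so the ordering issues you flag arise only in the $\Bh_2$ term, and operators at distinct tensor sites always commute in $U(\gl_n)^{\otimes k}$.
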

\begin{proof}
The proof is straightforward.
\end{proof}

\section{The $(\gl_{k},\gl_{n})$-duality}
\label{s5}
\subsection{The $(\gl_{k},\gl_{n})$-duality for Bethe algebras}
\label{s51}
Let $\mathfrak{X}_{n}$ be the
space of
all polynomials in anticommuting variables $\xi_{1}\lc\xi_{n}$.
Since $\xi_{i}\xi_{j}=-\xi_{j}\xi_{i}$ for any $\,i,j$, in particular,
$\xi_{i}^{2}=0$ for any $i$,
the monomials $\,\xi_{i_{1}}\dots\xi_{i_{l}}$,
$\,1\leq i_{1}<i_{2}<\ldots<i_{l}\leq n$, form
a basis of $\mathfrak{X}_{n}$.

The left derivations $\partial_1\lc\partial_n$ on $\mathfrak{X}_{n}$ are
linear maps such that
\begin{alignat}2
\label{leftderivation}
\partial_{i}\left(\xi_{j_{1}}\dots\xi_{j_{l}}\right) &{}=
(-1)^{s-1}\xi_{j_{1}}\dots\xi_{j_{s-1}}\,\xi_{j_{s+1}}\dots\xi_{j_{l}}\,,
\quad &&\text{if $\;i=j_s$ for some \,} s\,,
\\[2pt]
\notag
\partial_{i}\left(\xi_{j_{1}}\dots\xi_{j_{l}}\right) &{}=0\,,
&& \text{otherwise}\,.
\end{alignat}
It is easy to check that
$\partial_{i}\partial_{j}=-\partial_{j}\partial_{i}$ for any $\,i,j$,
in particular, $\partial_{i}^{2}=0$ for any $i$, and
$\partial_{i}\xi_{j}+\xi_{j}\partial_{i}=\delta_{ij}$ for any $\,i,j$.

\smallskip
Define a $\gl_{n}$-action on $\mathfrak{X}_{n}$ by
the rule
$e_{ij}\mapsto \xi_{i}\partial_{j}$. As a $\gl_{n}$-module, $\mathfrak{X}_{n}$ is isomorphic to $\bigoplus_{l=0}^{n}L_{\omega_{l}}$,
where
\begin{equation}
\label{omega}
\omega_{l}=(\underset{l}{\underbrace{ 1\lc 1}},0\lc 0)\,,
\end{equation}
and the component $\,L_{\omega_l}$ is spanned by the monomials of degree $\,l$.

Notice that the space $\mathfrak{X}_{n}$ coincides with the exterior algebra of $\C^{n}$.
The operators of left multiplication by $\xi_{1}\lc\xi_{n}$ and the left derivations $\partial_1\lc\partial_n$ give
on $\mathfrak{X}_{n}$
the irreducible representation of the Clifford algebra $\,\on{Cliff}_{n}$.

From now on, we will consider the Lie algebras $\gl_{n}$ and $\gl_{k}$ together.
We will write superscripts $\langle n\rangle$ and $\langle k\rangle$ to distinguish objects associated with algebras $\mathfrak{gl}_{n}$ and $\mathfrak{gl}_{k}$, respectively. For example, $e_{ij}^{\langle n\rangle}$, $\,i,j=1\lc n$,
are
the generators of $\mathfrak{gl}_{n}$, and $e_{ab}^{\langle k\rangle}$, $\,a,b=1\lc k$,
are
the generators of $\mathfrak{gl}_{k}$.

Let $\mathfrak{P}_{kn}$ be the vector space of polynomials in $kn$ pairwise anticommuting variables $\xi_{ai}$, $a=1\lc k$, $i=1\lc n$.
We have two vector space isomorphisms $\,\psi_{1}:(\mathfrak{X}_{n})^{\otimes k}\to \mathfrak{P}_{kn}$ and $\,\psi_{2} :(\mathfrak{X}_{k})^{\otimes n}\to \mathfrak{P}_{kn},$ given by:
\begin{align*}
\psi_{1} {}&{}: (p_{1}\otimes\ldots\otimes p_{k})\mapsto p_{1}(\xi_{11}\lc\xi_{1n})p_{2}(\xi_{21}\lc\xi_{2n})\dots p_{k}(\xi_{k1}\lc\xi_{kn})\,,
\\
\psi_{2} {}&{}: (p_{1}\otimes\ldots\otimes p_{n})\mapsto p_{1}(\xi_{11}\lc\xi_{k1})p_{2}(\xi_{12}\lc\xi_{k2})\dots p_{n}(\xi_{1n}\lc\xi_{kn})\,.
\end{align*}

Let $\,\partial_{ai}$, $\,a=1\lc k$, $\,i=1\lc n$, be the left derivations on $\mathfrak{P}_{kn}$ defined similarly to the left derivations
on $\mathfrak{X_{n}}$, see \eqref{leftderivation}.
Define actions of $\gl_{n}$ and $\gl_{k}$ on $\mathfrak{P}_{kn}$ by the formulas
$$e^{\langle n\rangle}_{ij}\mapsto \sum_{a=1}^{k}\xi_{ai}\partial_{aj}\,,\qquad e^{\langle k\rangle}_{ab}\mapsto \sum_{i=1}^{n}\xi_{ai}\partial_{bi}. $$
Then $\psi_{1}$ and $\psi_{2}$ are isomorphisms of $\gl_{n}$- and $\gl_{k}$-modules, respectively.

It is easy to check that $\gl_{n}$- and $\gl_{k}$-actions on $\,\mathfrak{P}_{kn}$ commute.
For the next
theorem, see for example \cite{CW}:
\begin{thm}
The $\gl_{n}\oplus \gl_{k}$-module $\mathfrak{P}_{kn}$ has the decomposition
$\;\mathfrak{P}_{kn}=\bigoplus_{\lambda}
V^{\langle n\rangle}_{\lambda}\otimes V^{\langle k\rangle}_{\lambda '}$,
where the sum runs over $\,\lambda=(\lambda_1\lc\lambda_n)$ such that
$\la_1\le k$.
\end{thm}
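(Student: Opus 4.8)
The plan is to prove the decomposition by a bigraded character computation combined with the dual (skew) Cauchy identity, which is the statement of skew $(\gl_{k},\gl_{n})$-duality. First I would record that $\mathfrak{P}_{kn}$ is completely reducible as a $\gl_{n}\oplus\gl_{k}$-module. Indeed, through $\psi_{1}$ it is isomorphic as a $\gl_{n}$-module to $(\mathfrak{X}_{n})^{\otimes k}$, hence to a direct sum of polynomial irreducible $\gl_{n}$-modules; likewise, through $\psi_{2}$, it is a direct sum of polynomial irreducible $\gl_{k}$-modules. Since the two actions commute, $\mathfrak{P}_{kn}$ is semisimple as a $\gl_{n}\oplus\gl_{k}$-module, so
\[
\mathfrak{P}_{kn}\,=\,\bigoplus_{\mu,\nu}\,m_{\mu\nu}\;V^{\langle n\rangle}_{\mu}\otimes V^{\langle k\rangle}_{\nu},
\]
with multiplicities $m_{\mu\nu}\in\Z_{\ge0}$, the sum over partitions $\mu$ with at most $n$ parts and $\nu$ with at most $k$ parts. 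It then remains to show that $m_{\mu\nu}=1$ when $\nu=\mu'$ and $\mu_{1}\le k$, and $m_{\mu\nu}=0$ otherwise.

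Next I would compute the joint character. A direct check of the $\gl_{n}$- and $\gl_{k}$-Cartan actions shows that the variable $\xi_{ai}$ has $\gl_{n}$-weight $\epsilon_{i}$ and $\gl_{k}$-weight $\epsilon_{a}$; since $\mathfrak{P}_{kn}$ has a basis of squarefree monomials in the $\xi_{ai}$, its character in formal variables $x_{1}\lc x_{k}$ (for $\gl_{k}$) and $y_{1}\lc y_{n}$ (for $\gl_{n}$) is $\prod_{a=1}^{k}\prod_{i=1}^{n}(1+x_{a}y_{i})$. By the dual Cauchy identity,
\[
\prod_{a=1}^{k}\prod_{i=1}^{n}(1+x_{a}y_{i})\,=\,\sum_{\mu}\,s_{\mu'}(x_{1}\lc x_{k})\,s_{\mu}(y_{1}\lc y_{n}),
\]
where $s$ denotes a Schur polynomial; the term indexed by $\mu$ is nonzero precisely when $\mu$ has at most $n$ parts and $\mu'$ has at most $k$ parts, i.e.\ when $\mu$ fits in the $k\times n$ rectangle, and for such $\mu$ one has $s_{\mu}(y_{1}\lc y_{n})=\on{ch}V^{\langle n\rangle}_{\mu}$ and $s_{\mu'}(x_{1}\lc x_{k})=\on{ch}V^{\langle k\rangle}_{\mu'}$.

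Finally, since the characters $\on{ch}(V^{\langle n\rangle}_{\mu})\cdot\on{ch}(V^{\langle k\rangle}_{\nu})$ of the pairwise nonisomorphic irreducibles $V^{\langle n\rangle}_{\mu}\otimes V^{\langle k\rangle}_{\nu}$ are linearly independent, comparing the two expressions for $\on{ch}\mathfrak{P}_{kn}$ forces $m_{\mu\nu}=\delta_{\nu,\mu'}$ on the stated range of $\mu$ and $m_{\mu\nu}=0$ otherwise, which is exactly the asserted decomposition. The step requiring the most care is matching the ranges: the ``Schur polynomial in $k$ (resp.\ $n$) variables is nonzero'' conditions must be identified with the partitions that genuinely label polynomial irreducibles of $\gl_{k}$ (resp.\ $\gl_{n}$), and this intersection is exactly $\mu\subseteq(k^{n})$, i.e.\ $\mu_{1}\le k$ with at most $n$ parts. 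Alternatively, one can avoid characters altogether: write down explicit joint highest-weight vectors (products of anticommuting monomials shaped by the Young diagram of $\mu$, in the row--column grid of the $\xi_{ai}$), check that they are annihilated by the raising operators of both $\gl_{n}$ and $\gl_{k}$ with the correct weights, and count them against the multiplicities above.
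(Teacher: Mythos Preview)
The paper does not supply its own proof of this theorem; it simply refers the reader to \cite{CW}. Your argument is correct and is essentially the standard proof of skew Howe duality one finds there: complete reducibility of $\mathfrak{P}_{kn}$ as a $\gl_{n}\oplus\gl_{k}$-module, followed by the joint character computation and the dual Cauchy identity $\prod_{a,i}(1+x_{a}y_{i})=\sum_{\mu}s_{\mu'}(x)\,s_{\mu}(y)$.

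One small point worth tightening: the sentence ``since the two actions commute, $\mathfrak{P}_{kn}$ is semisimple as a $\gl_{n}\oplus\gl_{k}$-module'' is true but not quite immediate from separate semisimplicity. The clean way to say it is that the $\gl_{n}$-isotypic components are $\gl_{k}$-stable, and on each the $\gl_{k}$-action factors through the (finite-dimensional, polynomial) multiplicity space $\Hom_{\gl_{n}}(V^{\langle n\rangle}_{\mu},\mathfrak{P}_{kn})$, which is therefore semisimple; alternatively, just observe that the joint action integrates to a polynomial $GL_{n}\times GL_{k}$-action on a finite-dimensional space. Either remark fills the only gap, and the rest of your proof goes through as written. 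The alternative highest-weight-vector approach you mention at the end is also standard and equally valid.
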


The $\gl_{n}$- and $\gl_{k}$-actions on $\mathfrak{P}_{kn}$ can be extended to the actions of corresponding current Lie algebras by the formulas
\begin{align}
\label{action1}
e^{\langle n\rangle}_{ij}\otimes t^{s} &{}\mapsto \sum_{a=1}^{k}\,z_{a}^{s}\,\xi_{ai}\partial_{aj},
\\[2pt]
\label{action2}
e^{\langle k\rangle}_{ab}\otimes t^{s} &{}\mapsto \sum_{i=1}^{n}\,(-\alpha_{i})^{s}\,\xi_{ai}\partial_{bi}.
\end{align}
Then $\psi_{1}$ and $\psi_{2}$ are
respective
isomorphisms of the following $\gl_{n}[t]$- and $\gl_{k}[t]$-modules:
\begin{gather}
\label{ps1z}
\psi_{1}:\mathfrak{X}_{n}(z_{1})\otimes\mathfrak{X}_{n}(z_{2})\otimes\ldots\otimes\mathfrak{X}_{n}(z_{k})\to\mathfrak{P}_{kn}\,,
\\[4pt]
\label{ps2a}
\psi_{2}:\mathfrak{X}_{k}(-\alpha_{1})\otimes\mathfrak{X}_{k}(-\alpha_{2})\otimes\ldots\otimes\mathfrak{X}_{k}(-\alpha_{n})\to\mathfrak{P}_{kn}\,.
\end{gather}

The actions \eqref{action1} and \eqref{action2} do not commute. Nevertheless, it turns out that the images of the subalgebras $B^{\langle n\rangle}_{\bar{\alpha}}$ and $B^{\langle k\rangle}_{\bar{z}}$ in $\End (\mathfrak{P}_{kn})$
given by these actions coincide. We will use Theorems \ref{1} and \ref{2} to show the following.

\begin{thm}\label{main}
Let
\vvn.1>
$\pi_{\bar{z}}^{\langle n\rangle}: U(\gl_{n}[t])\to \End (\mathfrak{P}_{kn})$ and $\pi_{-\bar{\alpha}}^{\langle k\rangle}: U(\gl_{k}[t])\to \End (\mathfrak{P}_{kn})$ be the homomorphisms defined by formulas \eqref{action1} and \eqref{action2}, respectively. Then
\begin{equation}
\pi_{\bar{z}}^{\langle n\rangle}(\mathcal{B}_{\bar{\alpha}}^{\langle n\rangle})=\pi_{-\bar{\alpha}}^{\langle k\rangle}(\mathcal{B}_{\bar{z}}^{\langle k\rangle}).
\end{equation}
\end{thm}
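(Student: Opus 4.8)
The plan is to deduce Theorem \ref{main} from the Bethe ansatz description in Theorem \ref{2} together with the explicit duality of differential operators in Theorem \ref{1}. First I would reduce the equality of images of the two Bethe algebras to a statement about their common eigenspaces. Since $\mathfrak{P}_{kn}$ decomposes under $\gl_n\oplus\gl_k$ and the Bethe algebras preserve the $\hg^{\langle n\rangle}$- and $\hg^{\langle k\rangle}$-weight subspaces, it suffices to work on each bi-weight component $\bigl(\mathfrak{P}_{kn}\bigr)_{\mu,\mu'}$ separately, which via $\psi_1$ is a weight subspace of a tensor product of evaluation modules $\mathfrak{X}_n(z_1)\otimes\dots\otimes\mathfrak{X}_n(z_k)$ and via $\psi_2$ a weight subspace of $\mathfrak{X}_k(-\alpha_1)\otimes\dots\otimes\mathfrak{X}_k(-\alpha_n)$. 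By Theorem \ref{2}(1) each eigenspace of $\mathcal B_{\bar\alpha}^{\langle n\rangle}$ on such a component is one-dimensional, and likewise for $\mathcal B_{\bar z}^{\langle k\rangle}$; moreover for generic parameters (Theorem \ref{2}(2)) both actions are diagonalizable. Because the algebra generated by the union of the two images is finite-dimensional and its subalgebras are semisimple for generic parameters, it is enough to show that the \emph{joint eigenvectors} coincide: if every common eigenline of $\mathcal B_{\bar\alpha}^{\langle n\rangle}$ is also an eigenline of $\mathcal B_{\bar z}^{\langle k\rangle}$ and conversely, then the two commutative subalgebras have the same eigenspace decomposition and hence, being diagonalizable with simple joint spectrum on each component, the same image. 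A density/specialization argument then removes the genericity assumption, since the matrix entries of generators of both images depend polynomially (indeed algebraically) on $\bar\alpha,\bar z$ and the inclusion of one image in the other is a Zariski-closed condition.

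The heart of the argument is therefore the following eigenvector-matching statement, which is exactly Theorem \ref{main2} referenced in the introduction: if $v\in\mathfrak{P}_{kn}$ is an eigenvector of $\pi_{\bar z}^{\langle n\rangle}(\mathcal B_{\bar\alpha}^{\langle n\rangle})$ with $B_i^{\langle n\rangle}(x)v=b_i(x)v$, and $D=(d/dx)^n+\sum b_i(x)(d/dx)^{n-i}$ is the associated differential operator, then by Theorem \ref{2}(3) $D=D_V^{\aug}$ for a space of quasi-exponentials $V$ with data $(\bar\mu,\bar\lambda;\bar\alpha,\bar z)$ (where $\mu^{(i)}=(\mu_i,0,\dots)$ records the $\gl_n$-weight and $\lambda^{(a)}$ the $\gl_k$-weights of the tensor factors). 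I would then apply Theorem \ref{1}: $\Dti_V$ is a monic differential operator of order $L=\sum\lambda_1^{(a)}$ whose kernel is a space of quasi-exponentials with data $(\bar\lambda',\bar\mu';\bar z,-\bar\alpha)$. The claim is that the coefficients $\tilde b_{st}$ of $\Dti_V^{\aug}$ are precisely the eigenvalues of the generators $\Bh_i^{\langle k\rangle}$ of $\mathcal B_{\bar z}^{\langle k\rangle}$ (under the action $\pi_{-\bar\alpha}^{\langle k\rangle}$, i.e. in the tensor product $\mathfrak{X}_k(-\alpha_1)\otimes\dots$) on the \emph{same} vector $v$. Granting this, Theorem \ref{2}(4) applied on the $\gl_k$-side identifies $v$ as the unique eigenvector of $\mathcal B_{\bar z}^{\langle k\rangle}$ attached to $\ker\Dti_V$, so $v$ is a common eigenvector; running the construction in reverse (using that $(\cdot)^{\#}$ has order $4$ and that the data transformation $(\bar\mu,\bar\lambda;\bar\alpha,\bar z)\mapsto(\bar\lambda',\bar\mu';\bar z,-\bar\alpha)$ is an involution up to signs) shows the correspondence is bijective, giving the reverse inclusion as well.

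To prove that the $\tilde b_{st}$ are exactly the $\mathcal B_{\bar z}^{\langle k\rangle}$-eigenvalues on $v$, I would combine two ingredients. The first is Lemma \ref{KZinB}: the Gaudin Hamiltonians $H_a$ and Dynamical Hamiltonians $G_i$ are extracted as residues of quadratic expressions in the generating series $\Bh_i(x)$ and $\Ch_j(u)$, and symmetrically on the $\gl_k$-side. The recently established $(\gl_k,\gl_n)$-duality of Gaudin and Dynamical Hamiltonians on $\mathfrak{P}_{kn}$ (cited as \cite{TU}) says that under $\psi_1,\psi_2$ the operator $\pi_{\bar z}^{\langle n\rangle}(H_a^{\langle n\rangle})$ is identified with $\pi_{-\bar\alpha}^{\langle k\rangle}(G_a^{\langle k\rangle})$ (and $G_i^{\langle n\rangle}$ with $H_i^{\langle k\rangle}$) up to the appropriate normalization; this matches the leading residue data of $D$ at $z_a$ with that of $\Dti_V$ at $-\alpha$ and vice versa. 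The second ingredient is the explicit pseudodifferential description of $D\mapsto\Dti_V$ in formula \eqref{tilde D}: conjugating $D_V^{-1}$ by $(\cdot)^{\#}=(\cdot)^{\dagger}{}^{\ddagger}$ swaps the roles of $x$ and $d/dx$, so the Laurent coefficients $b_{ij}$ of $D_V^{\aug}$ at infinity get transformed into the $\tilde b_{st}$ by the universal polynomials $P_{st}$ of Theorem \ref{1}(3). The key point is that these \emph{same} universal polynomials describe, on the representation-theoretic side, how the generating series $\Bh_i^{\langle n\rangle}(x)$ of $\mathcal B_{\bar\alpha}^{\langle n\rangle}$ and $\Bh_s^{\langle k\rangle}(x)$ of $\mathcal B_{\bar z}^{\langle k\rangle}$ are related as operators on $\mathfrak{P}_{kn}$ --- this is checked by verifying the identity on a spanning set of eigenvectors, where both sides reduce to the already-matched quasi-exponential data. \textbf{The main obstacle} is precisely this last verification: establishing that the purely combinatorial transformation of differential-operator coefficients in Theorem \ref{1}(3) coincides, as an identity in $\End(\mathfrak{P}_{kn})$, with the change of generators between the two Bethe-algebra images. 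Handling it requires care with the augmentation/reduction procedure (the factors $\prod(d/dx-\alpha_i)$ and $\prod(d/dx-z_a)$ attached to zero partitions), with the signs coming from $(-1)^{M'}$ and the order-$4$ automorphism $(\cdot)^{\#}$, and with passing from the dense set of eigenvectors at generic parameters to all of $\mathfrak{P}_{kn}$; this is where the bulk of Sections \ref{s5} and \ref{s6} is spent.
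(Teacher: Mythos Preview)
Your overall strategy is the paper's: restrict to the weight components $\mathfrak P_{kn}[\boldsymbol l,\boldsymbol m]$, use Theorem~\ref{2} on both sides, match eigenvectors via Theorem~\ref{main2}, then lift the eigenvalue identity $\tilde b_{st}=P_{st}(\{b_{ij}\})$ from Theorem~\ref{1}(3) to the operator identity \eqref{relation for generators} on a diagonalizing basis, and finally extend to all $\bar\alpha,\bar z$ by polynomiality. However, your account of how Theorem~\ref{main2} is actually proved has a gap, and your identification of the ``main obstacle'' is off.

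The gap is this: knowing the duality $\rho^{\langle n,k\rangle}(H_a)=-\rho^{\langle k,n\rangle}(G_a)$ from \cite{TU} only tells you that $v$ is an eigenvector of the Dynamical Hamiltonians $G_a^{\langle k,n\rangle}$, which is a tiny piece of $\mathcal B_{\bar z}^{\langle k\rangle}$; it does not by itself make $v$ an eigenvector of the full Bethe algebra, so your plan to ``verify the operator identity on a spanning set of eigenvectors'' is circular at that point. The paper closes this as follows. First, by Theorem~\ref{1}(2) and Theorem~\ref{2}(4) on the $\gl_k$ side there \emph{exists} an eigenvector $\tilde v\in\mathfrak P_{kn}[\boldsymbol l,\boldsymbol m]$ of $\mathcal B_{\bar z}^{\langle k\rangle}$ attached to $\tilde V=\ker\Dti_V$. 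Second, a direct residue computation from \eqref{tilde D} gives $\tilde g_a=-h_a$ (formula \eqref{eqeigenvalues}), so via \eqref{Duality for KZ} both $v$ and $\tilde v$ have the same eigenvalues for the Gaudin Hamiltonians $H_a^{\langle n,k\rangle}$. Third---and this is the step you omit---Lemma~\ref{simple spectrum} says that for generic $\bar\alpha,\bar z$ the Gaudin Hamiltonians \emph{alone} have simple joint spectrum on $\mathfrak P_{kn}[\boldsymbol l,\boldsymbol m]$, which forces $\tilde v\propto v$. Only then is $v$ known to be a $\mathcal B_{\bar z}^{\langle k\rangle}$-eigenvector, after which the universal-polynomial step is automatic: both sides of \eqref{relation for generators} are diagonal in the common eigenbasis with matching eigenvalues. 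So the real work sits in \eqref{eqeigenvalues} and Lemma~\ref{simple spectrum} (plus the proof of Theorem~\ref{1} in Section~\ref{s6}), not in lifting $P_{st}$ to an operator identity.
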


Theorem \ref{main}
is
proved in Section \ref{s55}.

\begin{rem}
Let $B^{\langle n\rangle}_{ij,\bar{\alpha}}$ be the generators of the algebra $\mathcal{B}_{\bar{\alpha}}^{\langle n\rangle}$, cf \eqref{B_{ij}}. Here we indicated the dependence on $\bar{\alpha}$, explicitly. Then we have $\pi^{\langle n\rangle}_{-\bar{z}}(B^{\langle n\rangle}_{ij,-\bar{\alpha}})=(-1)^{n-i-j}\pi^{\langle n\rangle}_{\bar{z}}(B^{\langle n\rangle}_{ij,\bar{\alpha}})$. Therefore $\pi^{\langle n\rangle}_{-\bar{z}}(\mathcal B^{\langle n\rangle}_{-\bar{\alpha}})=\pi^{\langle n\rangle}_{\bar{z}}(\mathcal B^{\langle n\rangle}_{\bar{\alpha}})$.
\end{rem}

\subsection{The $(\gl_{k},\gl_{n})$-duality for Gaudin and Dynamical Hamiltonians}
Define $U(\gl_{n})^{\otimes k}$ and $U(\gl_{k})^{\otimes n}$-actions on $\mathfrak{P}_{kn}$ by
\begin{equation}\label{action3}
(e^{\langle n\rangle}_{ij})_{(a)}\mapsto \xi_{ai}\partial_{aj},
\end{equation}
\begin{equation}\label{action4}
(e^{\langle k\rangle}_{ab})_{(i)}\mapsto \xi_{ai}\partial_{bi}.
\end{equation}
Then $\psi_{1}$ and $\psi_{2}$ are isomorphisms of $U(\gl_{n})^{\otimes k}$- and $U(\gl_{k})^{\otimes n}$-modules, respectively.

\smallskip
In Section \ref{s43}, we introduced elements $H_{a}(\bar{\alpha},\bar{z})$ and $G_{i}(\bar{\alpha},\bar{z})$ of $U(\gl_{n})^{\otimes k}$. We will write them now as $H_{a}^{\langle n,k\rangle}(\bar{\alpha},\bar{z})$, $G_{i}^{\langle n,k\rangle}(\bar{\alpha},\bar{z})$. We will also consider
analogous
elements $H_{i}^{\langle k,n\rangle}(\bar{z},\bar{\alpha})$, $G_{a}^{\langle k,n\rangle}(\bar{z},\bar{\alpha})$ of $U(\gl_{k})^{\otimes n}$. The following result can be found in \cite{TU}:

\begin{lem}
Let $\rho^{\langle n,k\rangle}: U(\gl_{n})^{\otimes k}\to \End (\mathfrak{P}_{kn})$ and $\rho^{\langle k,n\rangle}: U(\gl_{k})^{\otimes n}\to \End (\mathfrak{P}_{kn})$ be the homomorphisms defined by \eqref{action3} and \eqref{action4} respectively. Then for any $i=1\lc n$, and $a=1\lc k$ we have:
\begin{align}
\label{Duality for KZ}
\rho^{\langle n,k\rangle}\bigl(H_{a}^{\langle n,k\rangle}(\bar{\alpha},\bar{z})\bigr)&{}=-\rho^{\langle k,n\rangle}\bigl(G_{a}^{\langle k,n\rangle}(\bar{z},-\bar{\alpha})\bigr)\,,
\\[4pt]
\label{Duality for KZ 2}
\rho^{\langle n,k\rangle}\bigl(G_{i}^{\langle n,k\rangle}(\bar{\alpha},\bar{z})\bigr)&{}=\rho^{\langle k,n\rangle}\bigl(H_{i}^{\langle k,n\rangle}(\bar{z},-\bar{\alpha})\bigr)\,.
\end{align}
\end{lem}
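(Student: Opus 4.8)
The plan is to verify the two identities \eqref{Duality for KZ} and \eqref{Duality for KZ 2} by direct computation of both sides as explicit differential/difference operators on $\mathfrak{P}_{kn}$, using the fact that $\rho^{\langle n,k\rangle}$ and $\rho^{\langle k,n\rangle}$ act through the \emph{same} collection of operators $\xi_{ai}\partial_{bi}$, $\xi_{ai}\partial_{aj}$ on $\mathfrak{P}_{kn}$. The key reduction is that $\rho^{\langle n,k\rangle}((e^{\langle n\rangle}_{ij})_{(a)}) = \xi_{ai}\partial_{aj} = \rho^{\langle k,n\rangle}((e^{\langle k\rangle}_{ab})_{(i)})$ up to relabeling of indices, so the Casimir-type element $\Omega_{(ab)}^{\langle n,k\rangle}=\sum_{i,j}(e^{\langle n\rangle}_{ij})_{(a)}(e^{\langle n\rangle}_{ji})_{(b)}$ and its $\gl_k$-counterpart both map to expressions in the $\xi_{ai}\partial_{bj}$.

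First I would record the images under $\rho^{\langle n,k\rangle}$ of the three building blocks: $(e^{\langle n\rangle}_{ii})_{(a)}\mapsto\xi_{ai}\partial_{ai}$; the off-diagonal Casimir contributions $\sum_{i,j}\xi_{ai}\partial_{aj}\,\xi_{bj}\partial_{bi}$ for $a\ne b$; and the "diagonal" combination $\sum_{j\ne i}(e^{\langle n\rangle}_{ij}e^{\langle n\rangle}_{ji}-e^{\langle n\rangle}_{ii})$ that appears inside $G_i$. Then I would do the same for $\rho^{\langle k,n\rangle}$ applied to $H_i^{\langle k,n\rangle}(\bar z,-\bar\alpha)$ and $G_a^{\langle k,n\rangle}(\bar z,-\bar\alpha)$. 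The point is that in $H_a^{\langle n,k\rangle}$ the term $\sum_i\alpha_i(e^{\langle n\rangle}_{ii})_{(a)}$ maps to $\sum_i\alpha_i\,\xi_{ai}\partial_{ai}$, while in $G_a^{\langle k,n\rangle}(\bar z,-\bar\alpha)$ the term $\sum_i(-\alpha_i)(e^{\langle k\rangle}_{\cdot\cdot})_{(i)}$ picks up the opposite sign of $\alpha_i$, accounting for the minus sign in \eqref{Duality for KZ}; and one has to check that the remaining Gaudin-type sum over $b\ne a$ on the left matches, term by term in the partial-fraction variables, the dynamical sum over $i\ne \cdot$ on the right after applying the Clifford relations $\partial_{ai}\xi_{bj}+\xi_{bj}\partial_{ai}=\delta_{ab}\delta_{ij}$ to normal-order the quartic expressions.

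The main obstacle is the careful bookkeeping of the normal-ordering corrections: when one reorders $\xi_{ai}\partial_{aj}\,\xi_{bj}\partial_{bi}$ the Clifford relation produces lower-order terms, and similarly $e^{\langle n\rangle}_{ij}e^{\langle n\rangle}_{ji}=\xi_{ai}\partial_{aj}\xi_{aj}\partial_{ai}$ contributes a correction that must be matched against the $-e^{\langle n\rangle}_{ii}$ term sitting in the definition of $G_i$. I expect that after this normal-ordering the quartic pieces on the two sides of each identity literally coincide (the operators $\xi_{ai}\partial_{bj}$ being shared), and the quadratic remainders match because the combination $e_{ij}e_{ji}-e_{ii}$ was precisely designed to cancel the Clifford anomaly; this is exactly the content that \cite{TU} established, so rather than reprove it I would simply cite \cite{TU} for the statement and, if desired, sketch this normal-ordering verification as the conceptual reason it holds. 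No convergence or regularity issue arises since everything is a finite sum of explicit operators on the finite-dimensional space $\mathfrak{P}_{kn}$.
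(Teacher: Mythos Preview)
Your proposal is correct and matches the paper's approach: the paper simply says ``The proof is straightforward'' after noting the result is from \cite{TU}, which is precisely the direct normal-ordering computation you outline. One minor slip: under the diagonal embedding $e^{\langle n\rangle}_{ij}\mapsto\sum_{a}\xi_{ai}\partial_{aj}$, the product $e^{\langle n\rangle}_{ij}e^{\langle n\rangle}_{ji}$ maps to $\sum_{a,b}\xi_{ai}\partial_{aj}\xi_{bj}\partial_{bi}$ rather than a single-$a$ term, but this does not affect the argument.
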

\begin{proof}
The proof is straightforward.
\end{proof}

\subsection{Restriction to the subspaces $\mathfrak{P}_{kn}[\boldsymbol{l},\boldsymbol{m}]$.}
Let $\,\mathcal{Z}_{kn}$ be the subset of all pairs
$(\boldsymbol{l},\boldsymbol{m})\in\Z_{\geq 0}^{k}\times\Z_{\geq 0}^{n}\,$,
\,$\,\boldsymbol{l}=(l_{1}\lc l_{k})$, $\,\boldsymbol{m}=(m_{1}\lc m_{n})$,
such that $l_{a}\leq n$ for all $a$, $\,m_{i}\leq k$ for all $i$, and
$\sum_{a=1}^{k}l_{a}=\sum_{i=1}^{n}m_{i}$.
For each $(\boldsymbol{l}, \boldsymbol{m})\in\mathcal{Z}_{kn}$,
\vv.16>
denote by $\mathfrak{P}_{kn}[\boldsymbol{l},\boldsymbol{m}]\subset\mathfrak{P}_{kn}$ the span of all monomials $\xi_{11}^{d_{11}}\!\dots\xi_{k1}^{d_{k1}}\dots\,\xi_{1n}^{d_{1n}}\!\dots\xi_{kn}^{d_{kn}}$ such that $\sum_{a=1}^{k}d_{ai}=m_{i}$ and $\sum_{i=1}^{n}d_{ai}=l_{a}$.
\vv.1>
Note that $d_{ai}\in \{0,1\}$ for all $a,i$. Clearly, we have a vector space decomposition:
$$\mathfrak{P}_{kn}=\bigoplus_{(\boldsymbol{l}, \boldsymbol{m})\in \mathcal{Z}_{kn}}\mathfrak{P}_{kn}[\boldsymbol{l},\boldsymbol{m}]\,.$$

\begin{lem}
For any $(\boldsymbol{l}, \boldsymbol{m})\in \mathcal{Z}_{kn}$, the subspace $\mathfrak{P}_{kn}[\boldsymbol{l},\boldsymbol{m}]$ is invariant under the actions of the algebras $\;\mathcal{B}^{\langle n\rangle}_{\bar{\alpha}}$ and $\;\mathcal{B}^{\langle k\rangle}_{\bar{z}}$.
\end{lem}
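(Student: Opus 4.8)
The plan is to realize $\mathfrak{P}_{kn}[\boldsymbol{l},\boldsymbol{m}]$ as an intersection of weight subspaces for the two Cartan subalgebras, and then to invoke the fact that each Bethe algebra commutes with the corresponding Cartan. First I would note that for a monomial $\xi=\prod_{a,i}\xi_{ai}^{d_{ai}}$ the operator $\pi_{\bar z}^{\langle n\rangle}(e^{\langle n\rangle}_{ii})=\sum_{a=1}^{k}\xi_{ai}\partial_{ai}$ acts by the scalar $\sum_{a=1}^{k}d_{ai}$, i.e.\ by the number of variables in column $i$, while $\pi_{-\bar\alpha}^{\langle k\rangle}(e^{\langle k\rangle}_{aa})=\sum_{i=1}^{n}\xi_{ai}\partial_{ai}$ acts by $\sum_{i=1}^{n}d_{ai}$, the number of variables in row $a$. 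Hence, writing $(\mathfrak{P}_{kn})^{\langle n\rangle}_{\boldsymbol{m}}$ for the $\mathfrak{h}^{\langle n\rangle}$-weight subspace of weight $\boldsymbol{m}$ and $(\mathfrak{P}_{kn})^{\langle k\rangle}_{\boldsymbol{l}}$ for the $\mathfrak{h}^{\langle k\rangle}$-weight subspace of weight $\boldsymbol{l}$, we have
$$
\mathfrak{P}_{kn}[\boldsymbol{l},\boldsymbol{m}]=(\mathfrak{P}_{kn})^{\langle n\rangle}_{\boldsymbol{m}}\cap(\mathfrak{P}_{kn})^{\langle k\rangle}_{\boldsymbol{l}}\,.
$$

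Next, by the theorem of \cite{MTV4} recalled in Section \ref{s4}, $\mathcal{B}_{\bar{\alpha}}^{\langle n\rangle}$ commutes with $U(\mathfrak{h}^{\langle n\rangle})$, so $\pi_{\bar z}^{\langle n\rangle}(\mathcal{B}^{\langle n\rangle}_{\bar\alpha})$ preserves $(\mathfrak{P}_{kn})^{\langle n\rangle}_{\boldsymbol{m}}$. It remains to check that it also preserves $(\mathfrak{P}_{kn})^{\langle k\rangle}_{\boldsymbol{l}}$, and for this it is enough to show that the entire image $\pi_{\bar z}^{\langle n\rangle}(U(\gl_{n}[t]))$ commutes with $\pi(\mathfrak{h}^{\langle k\rangle})$. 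This reduces to the generators: $\pi_{\bar z}^{\langle n\rangle}(e^{\langle n\rangle}_{ij}\otimes t^{s})=\sum_{a=1}^{k}z_{a}^{s}\,\xi_{ai}\partial_{aj}$, and each summand $\xi_{ai}\partial_{aj}$ deletes a variable from row $a$ and inserts one into row $a$, hence preserves every row-degree and commutes with each $\pi(e^{\langle k\rangle}_{bb})=\sum_{i}\xi_{bi}\partial_{bi}$; equivalently, a short computation with the relations $\partial_{ai}\xi_{bj}+\xi_{bj}\partial_{ai}=\delta_{ab}\delta_{ij}$ gives $[\xi_{ai}\partial_{aj},\,\sum_{l}\xi_{bl}\partial_{bl}]=0$. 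Therefore $\pi_{\bar z}^{\langle n\rangle}(\mathcal{B}^{\langle n\rangle}_{\bar\alpha})$ commutes with $\pi(\mathfrak{h}^{\langle k\rangle})$ and preserves $(\mathfrak{P}_{kn})^{\langle k\rangle}_{\boldsymbol{l}}$; combined with the previous paragraph it preserves the intersection $\mathfrak{P}_{kn}[\boldsymbol{l},\boldsymbol{m}]$.

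Finally, the claim for $\mathcal{B}_{\bar{z}}^{\langle k\rangle}$ follows by the symmetry exchanging the roles of $(\gl_{n},\bar{\alpha},\text{columns})$ and $(\gl_{k},\bar{z},\text{rows})$: $\mathcal{B}_{\bar{z}}^{\langle k\rangle}$ commutes with $U(\mathfrak{h}^{\langle k\rangle})$, so $\pi_{-\bar\alpha}^{\langle k\rangle}(\mathcal{B}^{\langle k\rangle}_{\bar z})$ preserves $(\mathfrak{P}_{kn})^{\langle k\rangle}_{\boldsymbol{l}}$, while $\pi_{-\bar\alpha}^{\langle k\rangle}(e^{\langle k\rangle}_{ab}\otimes t^{s})=\sum_{i=1}^{n}(-\alpha_{i})^{s}\,\xi_{ai}\partial_{bi}$ preserves every column-degree and hence commutes with $\pi(\mathfrak{h}^{\langle n\rangle})$, so it preserves $(\mathfrak{P}_{kn})^{\langle n\rangle}_{\boldsymbol{m}}$ as well. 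The only mildly delicate point in the whole argument is the sign bookkeeping in the fermionic commutator, which the ``row/column degree preservation'' reformulation avoids entirely; the rest is routine.
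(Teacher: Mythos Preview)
Your proof is correct and follows essentially the same approach as the paper: both rely on the Bethe algebra commuting with the Cartan subalgebra, together with the fact that the $\gl_n[t]$-action preserves the row-degree grading (the paper encodes this via the isomorphism $\psi_1$, identifying $\mathfrak{P}_{kn}[\boldsymbol{l},\boldsymbol{m}]$ with the weight-$\boldsymbol{m}$ subspace of the $\gl_n[t]$-submodule $L_{\omega_{l_1}}^{\langle n\rangle}(z_1)\otimes\cdots\otimes L_{\omega_{l_k}}^{\langle n\rangle}(z_k)$, while you verify directly that $\pi_{\bar z}^{\langle n\rangle}(\gl_n[t])$ commutes with $\mathfrak{h}^{\langle k\rangle}$). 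The two arguments are equivalent; the paper's packaging via formulas \eqref{isom1} and \eqref{isom2} has the incidental benefit of setting up the identifications needed later to apply Theorem~\ref{2}.
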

\begin{proof}
Recall $\,\mathfrak{X}_{n}=\bigoplus_{l=0}^{n}L_{\omega_{l}}$ as
a $\gl_{n}$-module. Then by the isomorphism $\psi_1$, see \eqref{ps1z},
the $\mathfrak{gl}_n[t]$-module $\,\mathfrak{P}_{kn}$
is the direct sum of tensor products
$L_{\omega_{l_1}}^{\langle n\rangle}(z_{1})\otimes\ldots\otimes L_{\omega_{l_k}}^{\langle n\rangle}(z_{k})$, and
\begin{equation}
\label{isom1}
\mathfrak{P}_{kn}[\boldsymbol{l},\boldsymbol{m}]=\psi_1\bigl(
\bigl(L_{\omega_{l_1}}^{\langle n\rangle}(z_{1})\otimes\ldots\otimes L_{\omega_{l_k}}^{\langle n\rangle}(z_{k})\bigr)_{\boldsymbol{m}}\bigr)\,.
\end{equation}
Hence, $\,\mathfrak{P}_{kn}[\boldsymbol{l},\boldsymbol{m}]$ is invariant under
the action of $\,\mathcal{B}^{\langle n\rangle}_{\bar{\alpha}}$, see Section \ref{s42}.

Similarly, $\,\mathfrak{X}_{k}=\bigoplus_{m=0}^{k}L_{\omega_m}$ as
a $\gl_{k}$-module. Then by the isomorphism $\psi_2$, see \eqref{ps2a},
the $\mathfrak{gl}_k[t]$-module $\,\mathfrak{P}_{kn}$
is the direct sum of tensor products
$L_{\omega_{m_1}}^{\langle k\rangle}(-\alpha_{1})\otimes\ldots\otimes L_{\omega_{m_n}}^{\langle k\rangle}(-\alpha_n)$, and
\begin{equation}
\label{isom2}
\mathfrak{P}_{kn}[\boldsymbol{l},\boldsymbol{m}]=\psi_2\bigl(
\bigl(L_{\omega_{m_1}}^{\langle k\rangle}(-\alpha_{1})\otimes\ldots\otimes L_{\omega_{m_n}}^{\langle k\rangle}(-\alpha_n)\bigr)_{\boldsymbol{l}}\bigr)\,.
\end{equation}
Thus $\,\mathfrak{P}_{kn}[\boldsymbol{l},\boldsymbol{m}]$ is invariant under
the action of $\,\mathcal{B}^{\langle k\rangle}_{\bar z}$.
\end{proof}
We will prove Theorem \ref{main} by showing that the restrictions of $\pi_{\bar{z}}^{\langle n\rangle}(\mathcal{B}_{\bar{\alpha}}^{\langle n\rangle})$ and $\pi_{-\bar{\alpha}}^{\langle k\rangle}(\mathcal{B}_{\bar{z}}^{\langle k\rangle})$ to each subspace $\mathfrak{P}_{kn}[\boldsymbol{l},\boldsymbol{m}]$ coincide.
We will also need the following lemma.
\begin{lem}\label{simple spectrum}
Fix $(\boldsymbol{l},\boldsymbol{m})\in\mathcal{Z}_{kn}$.
For generic $\,\bar{\alpha},\bar{z}$, the common eigenspaces
of the operators $\,\rho^{\langle n,k\rangle}(H_{a}^{\langle
n,k\rangle}(\bar{\alpha},\bar{z}))$, $\;a=1\lc k$,
restricted to
$\,\mathfrak{P}_{kn}[\boldsymbol{l},\boldsymbol{m}]$ are one-dimensional.
Similarly, for generic $\,\bar{\alpha},\bar{z}$, the common eigenspaces
of the operators
$\rho^{\langle
k,n\rangle}(H_{i}^{\langle k,n\rangle}(\bar{z},-\bar{\alpha}))$, $\;i=1\lc n$,
restricted to $\,\mathfrak{P}_{kn}[\boldsymbol{l},\boldsymbol{m}]$
are one-dimensional.
\end{lem}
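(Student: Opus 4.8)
The plan is to reduce the statement to the simplicity of the spectrum of one generic linear combination of the Gaudin Hamiltonians, and then to verify that simplicity by degenerating the evaluation parameters so that the Gaudin operators become diagonal in the monomial basis.

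Fix $(\boldsymbol l,\boldsymbol m)\in\mathcal Z_{kn}$ and write $W=\mathfrak P_{kn}[\boldsymbol l,\boldsymbol m]$. Its monomial basis is indexed by the $k$-tuples $(S_1\lc S_k)$ of subsets of $\{1\lc n\}$ with $|S_a|=l_a$ and $\#\{a\mid i\in S_a\}=m_i$, the monomial attached to $(S_1\lc S_k)$ involving $\xi_{ai}$ precisely when $i\in S_a$. The operators $A_a=\rho^{\langle n,k\rangle}\bigl(H^{\langle n,k\rangle}_a(\bar\alpha,\bar z)\bigr)$, $a=1\lc k$, come from the commutative Bethe algebra $\mathcal B^{\langle n\rangle}_{\bar\alpha}$ (cf. Lemma \ref{KZinB}), so they commute pairwise and preserve $W$. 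I would first record the elementary observation that if, for some $c=(c_1\lc c_k)$, the operator $A_c=\sum_a c_aA_a|_W$ has simple spectrum (that is, $\dim W$ distinct eigenvalues), then every common eigenspace of $A_1\lc A_k$ in $W$ is one-dimensional: indeed each $A_a$ commutes with $A_c$, hence is diagonal in the eigenbasis of $A_c$, and a common eigenspace of $A_1\lc A_k$ lies inside a single one-dimensional eigenspace of $A_c$. Hence it suffices to find, for generic $\bar\alpha,\bar z$, a value of $c$ making $A_c$ simple. Simplicity of $A_c$ is the nonvanishing of the discriminant of its characteristic polynomial; the matrix of $A_c$ in the monomial basis is polynomial in $\bar\alpha$ and in $1/(z_a-z_b)$, so this discriminant is a rational function of $(\bar\alpha,\bar z)$, regular off the diagonals, and it is enough to exhibit one parameter value at which it does not vanish.

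To produce such a value I would put $z_a=w_a/t$, with $\bar w$ fixed and pairwise distinct, and treat $t$ as a new variable. Since $1/(z_a-z_b)=t/(w_a-w_b)$, each $H^{\langle n,k\rangle}_a(\bar\alpha,\bar w/t)=\sum_i\alpha_i(e^{\langle n\rangle}_{ii})_{(a)}+t\sum_{b\ne a}\Omega_{(ab)}/(w_a-w_b)$ is polynomial in $t$, hence so is the discriminant of $A_c|_W$. At $t=0$ one has $A_a=\rho^{\langle n,k\rangle}\bigl(\sum_i\alpha_i(e^{\langle n\rangle}_{ii})_{(a)}\bigr)=\sum_i\alpha_i\,\xi_{ai}\partial_{ai}$, the operator multiplying the monomial indexed by $(S_1\lc S_k)$ by $\sum_{i\in S_a}\alpha_i$; thus $A_c|_{t=0}$ is diagonal with eigenvalue $\sum_a c_a\sum_{i\in S_a}\alpha_i$ on that monomial. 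For generic $\bar\alpha$ distinct subsets of $\{1\lc n\}$ have distinct $\alpha$-sums, so distinct monomials carry distinct $k$-tuples $\bigl(\sum_{i\in S_a}\alpha_i\bigr)_{a=1}^k$; these finitely many $k$-tuples are separated by $c\mapsto\sum_a c_a(\cdot)_a$ for generic $c$. Hence $A_c|_{t=0}$ has simple spectrum and its discriminant is nonzero at $t=0$; being a polynomial in $t$, it is then nonzero for all but finitely many $t$, and for such $t\ne 0$ the numbers $w_a/t$ are still pairwise distinct. This furnishes the required parameter value and, by Zariski density, proves the first assertion.

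For the second assertion I would run the mirror argument through the isomorphism $\psi_2$ of \eqref{ps2a}, viewing $W$ inside $\mathfrak X_k(-\alpha_1)\otimes\dots\otimes\mathfrak X_k(-\alpha_n)$, on which $\rho^{\langle k,n\rangle}\bigl(H^{\langle k,n\rangle}_i(\bar z,-\bar\alpha)\bigr)=\sum_a z_a(e^{\langle k\rangle}_{aa})_{(i)}+\sum_{j\ne i}\Omega^{\langle k\rangle}_{(ij)}/(\alpha_j-\alpha_i)$; substituting $\alpha_i=\beta_i/t$ makes these operators polynomial in $t$ and equal at $t=0$ to $\sum_a z_a\,\xi_{ai}\partial_{ai}$, diagonal with eigenvalue $\sum_{a:\,i\in S_a}z_a$ on the monomial indexed by $(S_1\lc S_k)$, so that distinct monomials carry distinct $n$-tuples for generic $\bar z$; one then concludes as before. (Alternatively, by \eqref{Duality for KZ 2} this second assertion is equivalent to the analogous statement for the Dynamical Hamiltonians $G^{\langle n,k\rangle}_i(\bar\alpha,\bar z)$, proved by the same degeneration $\alpha_i=\beta_i/t$.) I expect the only real point of care to be this passage from the $t=0$ degeneration back to generic parameters: it works precisely because the substitutions $z_a=w_a/t$ and $\alpha_i=\beta_i/t$ convert the rational dependence on the evaluation parameters into polynomial dependence on $t$, so that nonvanishing of the discriminant at $t=0$ propagates to generic values.
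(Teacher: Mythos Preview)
Your argument is correct and rests on the same idea as the paper's: kill the interaction term $\sum_{b\ne a}\Omega_{(ab)}/(z_a-z_b)$ so that the Hamiltonians degenerate to the diagonal operators $K_a=\sum_i\alpha_i(e^{\langle n\rangle}_{ii})_{(a)}$, which separate the monomials for generic $\bar\alpha$. The execution differs in packaging. The paper works directly with the common eigenspaces of the whole family: it notes that the $K_a$ already have one\nobreakdash-dimensional common eigenspaces on $\mathfrak P_{kn}[\boldsymbol l,\boldsymbol m]$ once the $\alpha_i$ are $\Z$-linearly independent, and then invokes a perturbation argument (take $|z_a-z_b|$ large) to conclude the same for the full $H_a$. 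You instead reduce to a single operator by forming a generic linear combination $A_c$, rescale $z_a=w_a/t$ to make the dependence on $t$ polynomial, and track the discriminant of $A_c$; nonvanishing at $t=0$ then propagates to generic $t$ and, by irreducibility of the parameter space, to generic $(\bar\alpha,\bar z)$. Your route is slightly longer but more explicitly algebraic, avoiding any appeal to analytic perturbation of commuting families; the paper's route is terser but leaves that perturbation step implicit. Either way the second claim follows by the mirror argument, exactly as you indicate.
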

\begin{proof}
For every monomial $p\in\mathfrak{P}_{kn}$, we have
$(e^{\langle n\rangle}_{ii})_{(a)}p=m^{a}_{i}(p)p$ and $m^{a}_{i}(p)\in\Z$.
Moreover, if $p\neq
p'$, there exist $i,a$ such that $m^{a}_{i}(p)\neq m^{a}_{i}(p')$. Take
$\bar{\alpha}$ such that $\alpha_{1}\lc\alpha_{n}$ are linearly independent
over $\Z$. Then
for the operators $K_{a}=\rho^{\langle
n,k\rangle}(\sum_{i=1}^{n}\alpha_{i}(e^{\langle n\rangle}_{ii})_{(a)})$,
$a=1\lc k$, the common eigenspaces are one-dimensional.
Therefore, the common eigenspaces of the
operators $\rho^{\langle n,k\rangle}(H_{a}^{\langle
n,k\rangle}(\bar{\alpha},\bar{z}))=K_{a}+\sum_{b\neq
a}\Omega_{(ab)}(z_{a}-z_{b})^{-1}$, $a=1\lc k$,
restricted to a
finite-dimen\-sional submodule of $\mathfrak{P}_{kn}$ are one-dimensional
provided all the differences
$|z_{a}-z_{b}|$
are sufficiently large.
Hence,
for generic $\bar{\alpha}$ and $\bar{z}$,
the common eigenspaces of the operators
$\rho^{\langle n,k\rangle}(H_{a}^{\langle n,k\rangle}(\bar{\alpha},\bar{z}))$,
$a=1\lc k$,
restricted to a
$\mathfrak{P}_{kn}[\boldsymbol{l},\boldsymbol{m}]$ are one-dimensional.

The proof of the second claim is similar.
\end{proof}

\subsection{Spaces of quasi-exponentials and the $(\gl_{k},\gl_{n})$-duality}
Fix $(\boldsymbol{l},\boldsymbol{m})\in\mathcal{Z}_{kn}$,
and define
$\bar{\mu}=(\mu^{(1)}\lc\mu^{(n)}),\,
\bar{\lambda}=(\lambda^{(1)}\lc\lambda^{(k)})$ as follows.
If $\boldsymbol{l}=(l_{1}\lc l_{k})$ and $\boldsymbol{m}=(m_{1}\lc m_{n})$,
then $\mu^{(i)}=(m_{i},0,\dots{})$, $\,i=1\lc n$, and
$\lambda^{(a)}=\omega_{l_{a}}$, $\,a=1\lc k$, see \eqref{omega}.

By
Theorem \ref{2}
and formulas \eqref{isom1}, \eqref{isom2}, a space of quasi-exponentials with the data $(\bar{\mu},\bar{\lambda};\bar{\alpha},\bar{z})$
defined
above gives
rise to
an eigenvector of the action $\pi_{\bar{z}}^{\langle n\rangle}$ of $B_{\bar{\alpha}}^{\langle n\rangle}$ on $\mathfrak{P}_{kn}[\boldsymbol{l},\boldsymbol{m}]$. Similarly, a space of quasi-exponentials with the data $(\bar{\lambda}',\bar{\mu}';\bar{z},-\bar{\alpha})$ gives
rise to
an eigenvector of the action $\pi_{-\bar{\alpha}}^{\langle k\rangle}$ of $B_{\bar{z}}^{\langle k\rangle}$ on $\mathfrak{P}_{kn}[\boldsymbol{l},\boldsymbol{m}]$. We have the following theorem.

\begin{thm}\label{main2}
Let $V$ be a space of quasi-exponentials with the data $(\bar{\mu},\bar{\lambda};\bar{\alpha},\bar{z})$, and $v\in\mathfrak{P}_{kn}[\boldsymbol{l},\boldsymbol{m}]$ be an eigenvector of the action $\pi_{\bar{z}}^{\langle n\rangle}$ of $B_{\bar{\alpha}}^{\langle n\rangle}$ corresponding to $V$.
For the fundamental operator $\,D_{V}$ of the space $V$, define
the operator
$\,\Dti_{V}$ by formula \eqref{tilde D}, and set $\,\tilde{V}=\ker (\Dti_{V})$.
Then, for generic $\bar{\alpha},
\bar{z}$, the vector $v$ is an eigenvector of the action $\pi_{-\bar{\alpha}}^{\langle k\rangle}$ of $B_{\bar{z}}^{\langle k\rangle}$ corresponding to $\tilde{V}$.
\end{thm}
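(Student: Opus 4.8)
I would fix the pair $(\boldsymbol l,\boldsymbol m)$ and work for generic $\bar\alpha,\bar z$, using the translation of data preceding the theorem: $\mu^{(i)}=(m_i,0,\dots)$, $\lambda^{(a)}=\omega_{l_a}$, so $(\mu^{(i)})'=\omega_{m_i}$ and $(\lambda^{(a)})'=(l_a,0,\dots)$. Via \eqref{isom1}, \eqref{isom2}, Theorem \ref{2} applies to the action $\pi^{\langle n\rangle}_{\bar z}$ of $\mathcal B^{\langle n\rangle}_{\bar\alpha}$ and to the action $\pi^{\langle k\rangle}_{-\bar\alpha}$ of $\mathcal B^{\langle k\rangle}_{\bar z}$ on $\mathfrak P_{kn}[\boldsymbol l,\boldsymbol m]$; moreover $\pi^{\langle n\rangle}_{\bar z}=\rho^{\langle n,k\rangle}\circ\ev_{\bar z}$ and $\pi^{\langle k\rangle}_{-\bar\alpha}=\rho^{\langle k,n\rangle}\circ\ev_{-\bar\alpha}$. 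By Theorem \ref{2}\,\eqref{i3}, the hypothesis that $v$ corresponds to $V$ means $\pi^{\langle n\rangle}_{\bar z}(B_i(x))\,v=b_i(x)\,v$ with $D_V^{\aug}=(d/dx)^n+\sum_{i=1}^n b_i(x)(d/dx)^{n-i}$. The plan is then: (A) show $v$ is an eigenvector of $\pi^{\langle k\rangle}_{-\bar\alpha}(\mathcal B^{\langle k\rangle}_{\bar z})$; (B) identify the space of quasi-exponentials it corresponds to (via Theorem \ref{2}\,\eqref{i3}--\eqref{i4} for $\gl_k$) with $\Tilde V$.

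For (A): by Lemma \ref{KZinB} the Dynamical Hamiltonians $G^{\langle n,k\rangle}_i(\bar\alpha,\bar z)$ lie in the subalgebra generated by the coefficients of $\ev_{\bar z}(B_i(x))$, hence in $\ev_{\bar z}(\mathcal B^{\langle n\rangle}_{\bar\alpha})$, so $\rho^{\langle n,k\rangle}(G^{\langle n,k\rangle}_i(\bar\alpha,\bar z))\in\pi^{\langle n\rangle}_{\bar z}(\mathcal B^{\langle n\rangle}_{\bar\alpha})$ and $v$ is a common eigenvector of these operators, $i=1\lc n$. By the duality \eqref{Duality for KZ 2} they coincide with the Gaudin operators $\rho^{\langle k,n\rangle}(H^{\langle k,n\rangle}_i(\bar z,-\bar\alpha))$. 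For generic $\bar\alpha,\bar z$, Lemma \ref{simple spectrum} makes the common eigenspaces of the latter in $\mathfrak P_{kn}[\boldsymbol l,\boldsymbol m]$ one-dimensional, so $\C v$ is a full common eigenspace; since $\rho^{\langle k,n\rangle}(H^{\langle k,n\rangle}_i(\bar z,-\bar\alpha))\in\pi^{\langle k\rangle}_{-\bar\alpha}(\mathcal B^{\langle k\rangle}_{\bar z})$ (the $\gl_k$-analog of Lemma \ref{KZinB}), which is commutative, every element of $\pi^{\langle k\rangle}_{-\bar\alpha}(\mathcal B^{\langle k\rangle}_{\bar z})$ preserves $\C v$, and $v$ is its eigenvector. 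By Theorem \ref{2}\,\eqref{i3} for $\gl_k$, there is a space of quasi-exponentials $W$ with data $(\bar\lambda',\bar\mu';\bar z,-\bar\alpha)$ such that $\pi^{\langle k\rangle}_{-\bar\alpha}(\Tilde B_s(x))\,v=w_s(x)\,v$ with $D_W^{\aug}=(d/dx)^k+\sum_{s=1}^k w_s(x)(d/dx)^{k-s}$.

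For (B): by Theorem \ref{1}\,\eqref{1i} and \eqref{2i}, $\Tilde V=\ker\Dti_V$ is a space of quasi-exponentials with data $(\bar\lambda',\bar\mu';\bar z,-\bar\alpha)$ and $\Dti_V^{\aug}$ is monic of order $k$; by Theorem \ref{2}\,\eqref{i4} for $\gl_k$ it suffices to prove $D_W^{\aug}=\Dti_V^{\aug}$. Computing $\rho^{\langle k,n\rangle}(H^{\langle k,n\rangle}_i(\bar z,-\bar\alpha))v$ via $W$ and the $\gl_k$-analog of Lemma \ref{KZinB}, and also via \eqref{Duality for KZ 2} and Lemma \ref{KZinB}, yields for $i=1\lc n$
$$
\Res_{x=-\alpha_i}\Bigl(\frac{w_1(x)^2}2-w_2(x)\Bigr)=\Res_{u=\alpha_i}\Bigl(\frac{c_1(u)^2}2-c_2(u)\Bigr),
$$
where $c_j(u)$ is defined by $\prod_{l=1}^n(u-\alpha_l)\sum_{j\ge0}c_j(u)x^{-j}=u^n+\sum_{i=1}^n b_i(x)u^{n-i}$. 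For generic $\bar\alpha,\bar z$, Lemma \ref{simple spectrum} together with Theorem \ref{2}\,\eqref{i4} and Lemma \ref{KZinB} shows that a space of quasi-exponentials with data $(\bar\lambda',\bar\mu';\bar z,-\bar\alpha)$ is determined by the $n$-tuple of residues $\bigl(\Res_{x=-\alpha_i}(\tfrac12 w_1^2-w_2)\bigr)_{i=1}^n$ of the coefficients of its augmented fundamental operator. Hence $D_W^{\aug}=\Dti_V^{\aug}$ will follow once the same $n$-tuple is shown to arise from $\Dti_V^{\aug}$, i.e., once one proves the identity $\Res_{x=-\alpha_i}(\tfrac12\Tilde b_1^2-\Tilde b_2)=\Res_{u=\alpha_i}(\tfrac12 c_1^2-c_2)$, where $\Dti_V^{\aug}=(d/dx)^k+\Tilde b_1(x)(d/dx)^{k-1}+\Tilde b_2(x)(d/dx)^{k-2}+\dots$.

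This last identity is what I expect to be the main obstacle. It compares $D_V^{\aug}$ and $\Dti_V^{\aug}$, which by \eqref{tilde D} — after absorbing the augmentation factors, using $(d/dx-\alpha_l)^{\#}=-(x+\alpha_l)$ and $M'+\#\{l: m_l=0\}=n$ — are linked by $\Dti_V^{\aug}=(-1)^n\prod_{l=1}^n(x+\alpha_l)\bigl((D_V^{\aug})^{-1}\bigr)^{\#}\prod_{a=1}^k(d/dx-z_a)$. I would establish it by a direct computation in $\Psi\mathfrak D$ tracking the two subleading coefficients: the expression $\tfrac12 f_1^2-f_2$ formed from them is, modulo the derivative of a rational function (which has no residue), unchanged by the formal conjugate $(\cdot)^{\dagger}$, while the bispectral dual $(\cdot)^{\ddagger}$ interchanges the left symbols in $x$ and in $d/dx$ and therefore turns residues at poles in $x$ (``Gaudin'') into residues at poles in the spectral variable (``Dynamical''); the remaining left multiplication by $\prod(x+\alpha_l)$ and right multiplication by $\prod(d/dx-z_a)$ affect only the leading coefficient and are carried through two orders, and $(d/dx)^{\#}=-x$, $x^{\#}=d/dx$ send the pole at $u=\alpha_i$ to the pole at $x=-\alpha_i$. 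In effect this identity is the differential-operator shadow of the $(\gl_k,\gl_n)$-duality for the Gaudin and Dynamical Hamiltonians, now pinned to the explicit transformation $D_V\mapsto\Dti_V$; the companion identity $\Res_{u=z_a}(\tfrac12\Tilde c_1^2-\Tilde c_2)=-\Res_{x=z_a}(\tfrac12 b_1^2-b_2)$, which arises in the same way from \eqref{Duality for KZ}, can serve as a cross-check.
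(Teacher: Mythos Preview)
Your approach is correct and essentially the same as the paper's. Both hinge on Lemma~\ref{simple spectrum} to pin the eigenvector via the Gaudin Hamiltonians, and both reduce the identification to a residue identity comparing the first two subleading coefficients of $D_V^{\aug}$ and $\Dti_V^{\aug}$ --- exactly what you call the ``differential-operator shadow'' of the Gaudin/Dynamical duality. The paper dismisses this identity as ``straightforward, though lengthy''; your sketch of it is reasonable.

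The differences are only organizational. First, you use duality \eqref{Duality for KZ 2} and the residues at $u=\alpha_i$ (the identity $\Res_{x=-\alpha_i}(\tfrac12\tilde b_1^2-\tilde b_2)=\Res_{u=\alpha_i}(\tfrac12 c_1^2-c_2)$), whereas the paper uses \eqref{Duality for KZ} and the residues at $x=z_a$ --- precisely your ``companion identity'' $\tilde g_a=-h_a$. These are symmetric and equally good. Second, you split the argument into (A) showing $v$ is a $\mathcal B^{\langle k\rangle}_{\bar z}$-eigenvector and (B) identifying its associated space $W$ with $\tilde V$; the paper instead starts from the eigenvector $\tilde v$ corresponding to $\tilde V$ (which exists by Theorem~\ref{1} and Theorem~\ref{2}\,\eqref{i4}) and shows directly that $\tilde v$ and $v$ share the same $H_a^{\langle n,k\rangle}$-eigenvalues, hence are proportional by Lemma~\ref{simple spectrum}. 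This bypasses your step (A) entirely and is a bit shorter, but your route has the merit of making explicit why $v$ is an eigenvector for the second Bethe algebra before any comparison is made.
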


\begin{proof}
%%<<
Let \,$b_{i}(x)$ and \,$\tilde{b}_{i}(x)$ be the coefficients of
\,$D_V^{\aug}=D_V\,\prod_{i=1,\,\mu^{(i)}=0}^n(d/dx-\alpha_i)$
\,and \,$\Dti_V^{\aug}=\Dti_V\,\prod_{a=1,\,\lambda^{(a)}=0}^k(d/dx-z_a)$\,:
$$
D_{V}^{\aug}=\left(\frac{d}{dx}\right)^{n}+
\sum_{i=1}^{n}b_{i}(x)\left(\frac{d}{dx}\right)^{n-i},
\qquad
\Dti_{V}^{\aug}=\left(\frac{d}{dx}\right)^{k}+
\sum_{a=1}^{k}\tilde{b}_{a}(x)\left(\frac{d}{dx}\right)^{k-a}\>.
$$
By Lemma \ref{coefficients},
%the coefficients
$b_{i}(x)$ and $\tilde{b}_{i}(x)$ are rational functions of $x$. Define
functions $\,\tilde{c}_i(u)$, $\,i\in\Z_{\geq 0}$, by the rule:
$$\prod_{a=1}^{k}(u-z_{a})\,\sum_{i=0}^{\infty}\,\tilde{c}_{i}(u)x^{-i}=
u^{k}+\sum_{a=1}^{k}\,\tilde{b}_{a}(x)u^{k-a}.$$
%Also,
Set
$$
h_{a}=\Res_{x=z_{a}}
\Bigl(\frac{b^{2}_{1}(x)}2-b_{2}(x)\<\Bigr),\qquad
\tilde{g}_{a}=\Res_{u=z_{a}}
\Bigl(\frac{\tilde{c}^{2}_{1}(u)}2-\tilde{c}_{2}(u)\<\Bigr).
$$
By a straightforward, though lengthy, calculation, one can show that
\begin{equation}\label{eqeigenvalues}
\tilde{g}_{a}=-h_{a}.
\end{equation}
%%>>

By Lemma \ref{KZinB} and Theorem \ref{2},
for each $\,a=1\lc k$,
the vector $v$ is an eigenvector of $\rho^{\langle n,k\rangle}(H_{a}^{\langle n,k\rangle}(\bar{\alpha},\bar{z}))$ with eigenvalue $h_{a}$.
Similarly,
an eigenvector
$\tilde{v}\in\mathfrak{P}_{kn}[\boldsymbol{l},\boldsymbol{m}]$
of the action $\pi_{-\bar{\alpha}}^{\langle k\rangle}$ of $B_{\bar{z}}^{\langle k\rangle}$ corresponding to $\tilde{V}$
is an eigenvector of $\rho^{\langle k,n\rangle}(G_{a}^{\langle k,n\rangle}(\bar{z},-\bar{\alpha}))$ with eigenvalue $\tilde{g}_{a}$
for each $\,a=1\lc k$.
Therefore, by formulas \eqref{Duality for KZ} and \eqref{eqeigenvalues}
for each $\,a=1\lc k$, the vector
$\tilde{v}$ is
an eigenvector of $\rho^{\langle n,k\rangle}(H_{a}^{\langle n,k\rangle}(\bar{\alpha},\bar{z}))$ with eigenvalue $h_{a}$, the same as for $v$.
Hence, by Lemma \ref{simple spectrum},
the vector
$\tilde{v}$ is proportional to $v$.
\end{proof}

\subsection{Proof of Theorem \ref{main}}
\label{s55}
Let $B^{\langle n\rangle}_{ij,\bar{\alpha}}$, $i=1\lc n$, $j\in\Z_{\geq 0}$, and $B^{\langle k\rangle}_{st,\bar{z}}$, $s=1\lc k$, $t\in\Z_{\geq 0}$, be the generators of the algebras $\mathcal{B}_{\bar{\alpha}}^{\langle n\rangle}$ and $\mathcal{B}_{\bar{z}}^{\langle k\rangle}$, respectively, see
\eqref{B_{ij}}.

%%<<
Assume first that $\bar{\alpha}$ and $\bar{z}$ are generic. Take a common eigenvector $v$ of $ \pi_{\bar{z}}^{\langle n\rangle}(B^{\langle n\rangle}_{ij,\bar{\alpha}})$, $\,i=1\lc n$, $\,j\in\Z_{\geq 0}$, corresponding to a space $V$ of quasi-exponentials with the data $(\bar{\mu},\bar{\lambda};\bar{\alpha},\bar{z})$ as in Theorem \ref{main2}. The eigenvalue of $ \pi_{\bar{z}}^{\langle n\rangle}(B^{\langle n\rangle}_{ij,\bar{\alpha}})$ associated to $v$ is the coefficient $b_{ij}$ in the expansion $D_{V}^{\aug}=\sum_{i=0}^{M'}\sum_{j=0}^{\infty}b_{ij}x^{-j}(d/dx)^{M'-i}$.
By Theorem \ref{main2}, $v$ is
also
a common eigenvector of $\pi_{-\bar{\alpha}}^{\langle k\rangle}(B^{\langle k\rangle}_{st,\bar{z}})$, $\,s=1\lc k$, $\,t\in\Z_{\geq 0}$, and the corresponding eigenvalue of $ \pi_{-\bar{\alpha}}^{\langle k\rangle}(B^{\langle k\rangle}_{st,\bar{z}})$ is the coefficient $\tilde{b}_{st}$ in the expansion $\Dti_{V}^{\aug}=\sum_{s=0}^{L}\sum_{t=0}^{\infty}\tilde{b}_{st}x^{-t}(d/dx)^{L-s}$, where $\Dti_{V}$ is given by the formula \eqref{tilde D}.
\vvn.16>
Due to Theorem \ref{1}, part \eqref{3i},
there exist polynomials $P_{st}$ in variables $r_{ij}$, $i=0\lc M'$, $j\geq 0$, independent of the eigenvector $v$, such that $\tilde{b}_{st}$
are obtained by
the substitution $r_{ij}=b_{ij}$ for all $i,j$, into the polynomial $P_{st}$,
\begin{equation}\label{relationfor coefficients}
\tilde{b}_{st}=P_{st}(\{b_{ij}\}).
\end{equation}
%%>>

By Theorem \ref{2}, part \eqref{i4}, the subspace $\mathfrak{P}_{kn}[\boldsymbol{l},\boldsymbol{m}]$ has a basis consisting of common eigenvectors of the operators $\pi_{\bar{z}}^{\langle n\rangle}(B^{\langle n\rangle}_{ij,\bar{\alpha}})$, $\,i=1\lc n$, $\,j\in\Z_{\geq 0}\,$.
\vvn.06>
Since the operator $ \pi_{-\bar{\alpha}}^{\langle k\rangle}(B^{\langle k\rangle}_{st,\bar{z}})$ is diagonal in such a basis, relation \eqref{relationfor coefficients} for eigenvalues implies the analogous relation for the operators:
\begin{equation}\label{relation for generators}
\pi_{-\bar{\alpha}}^{\langle k\rangle}(B^{\langle k\rangle}_{st,\bar{z}})=P_{st}(\{\pi_{\bar{z}}^{\langle n\rangle}(B^{\langle n\rangle}_{ij,\bar{\alpha}})\}).
\end{equation}

Since
the operators $\pi_{-\bar{\alpha}}^{\langle k\rangle}(B^{\langle k\rangle}_{ab,\bar{z}})$, $\,\pi_{\bar{z}}^{\langle n\rangle}(B^{\langle n\rangle}_{ij,\bar{\alpha}})$, and the coefficients of $P_{st}$ depend polynomially on $\bar{\alpha}$ and $\bar{z}$,
relation
\eqref{relation for generators} holds for any $\bar{\alpha}$ and $\bar{z}$, and $\pi_{-\bar{\alpha}}^{\langle k\rangle}(\mathcal{B}_{\bar{z}}^{\langle k\rangle})\subset\pi_{\bar{z}}^{\langle n\rangle}(\mathcal{B}_{\bar{\alpha}}^{\langle n\rangle})$.

\smallskip
Exchanging the roles of $\gl_{k}$ and $\gl_{n}$, we obtain that $\pi_{-\bar{z}}^{\langle n\rangle}(\mathcal{B}_{-\bar{\alpha}}^{\langle n\rangle})\subset\pi_{-\bar{\alpha}}^{\langle k\rangle}(\mathcal{B}_{\bar{z}}^{\langle k\rangle})$ as well. Since $\pi_{-\bar{z}}^{\langle n\rangle}(\mathcal{B}_{-\bar{\alpha}}^{\langle n\rangle})=\pi_{\bar{z}}^{\langle n\rangle}(\mathcal{B}_{\bar{\alpha}}^{\langle n\rangle})$, see the remark at the end of Section \ref{s51}, Theorem \ref{main} is proved.

\section{Quotient differential operator}
\label{s6}
\subsection{Factorization of a differential operator}
For any functions $g_{1}\lc g_{n}$, let $$W(g_{1}\lc g_{n})=\det((g_{i}^{(j-1)})_{i,j=1}^{n})$$ be their Wronski determinant.
Let $W_{i}(g_{1}\lc g_{n})$ be the determinant of the $n\times n$ matrix whose $j$-th row is $g_{j}, g'_{j}\lc g_{j}^{(n-i-1)},$ $g_{j}^{(n-i+1)}\lc g_{j}^{(n)}$.

Consider a monic differential operator $D$ of order $n$ with coefficients $a_{i}(x)$, $i=1\lc n$:
\begin{equation}\label{D1}
D= \left(\frac{d}{dx}\right)^{n}+\sum_{i=1}^{n}a_{i}(x)\left(\frac{d}{dx}\right)^{n-i},
\end{equation}
and let $f_{1}, f_{2}\lc f_{n}$ be linearly independent solutions of the differential equation $Df=0$.

\begin{lem}\label{wronskian formula for differential op}
The coefficients $\,a_{1}(x)\lc a_{n}(x)$ of the differential operator $\,D$ are given by the formulas
\begin{equation}\label{a_{i}(x)}
a_{i}(x)=(-1)^{i}\,\frac{W_{i}(f_{1}\lc f_{n})}{W(f_{1}\lc f_{n})}\;, \qquad i=1\lc n\,,\kern-2em
\end{equation}
Moreover, for any function $g$,
we have
\begin{equation}\label{D}
Dg=\frac{W(f_{1}\lc f_{n}, g)}{W(f_{1}\lc f_{n})}\;.
\end{equation}
\end{lem}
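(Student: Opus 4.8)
The plan is to prove Lemma \ref{wronskian formula for differential op} by the classical argument that a monic differential operator is determined by its solution space, and then read off the coefficients from a determinantal expansion.

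First I would establish \eqref{D}. The key observation is that the expression $g\mapsto W(f_1\lc f_n,g)/W(f_1\lc f_n)$ is, by expanding the $(n+1)\times(n+1)$ Wronskian along its last column, a monic differential operator of order $n$ in $g$: the coefficient of $g^{(n)}$ is $W(f_1\lc f_n)/W(f_1\lc f_n)=1$, and the coefficients of the lower-order derivatives are ratios of $n\times n$ determinants. Call this operator $\tilde D$. Since $f_1\lc f_n$ are linearly independent solutions of $Df=0$ and $D$ is monic of order $n$, they span $\ker D$; but also $\tilde D f_j=W(f_1\lc f_n,f_j)/W(f_1\lc f_n)=0$ for each $j$, because the numerator has a repeated row, so $f_1\lc f_n\in\ker\tilde D$ as well. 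Hence $D-\tilde D$ is a differential operator of order at most $n-1$ annihilating the $n$-dimensional space $\langle f_1\lc f_n\rangle$, which forces $D-\tilde D=0$. This proves \eqref{D}, and along the way it identifies $a_i(x)$ as the ratio of the appropriate cofactor to $W(f_1\lc f_n)$.

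It remains to match signs in \eqref{a_{i}(x)}. Expanding $W(f_1\lc f_n,g)$ along the last column, the coefficient of $g^{(n-i)}$ is $(-1)^{i}$ times the minor obtained by deleting the last column and the row indexed by the $(n-i)$-th derivative; that minor is precisely $W_i(f_1\lc f_n)$ by its definition (the rows are $f_j,f'_j\lc f_j^{(n-i-1)},f_j^{(n-i+1)}\lc f_j^{(n)}$), once one is careful about the placement of the omitted derivative order relative to the final row $f_j^{(n)}$. So $a_i(x)=(-1)^i W_i(f_1\lc f_n)/W(f_1\lc f_n)$, as claimed.

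The only real point requiring care — the \emph{main obstacle}, such as it is — is bookkeeping the sign and the index shift in the cofactor expansion: making sure that deleting the row corresponding to $g^{(n-i)}$ in the $(n+1)\times(n+1)$ determinant yields exactly $W_i(f_1\lc f_n)$ with the stated sign $(-1)^i$, rather than an off-by-one or an overall sign error. Everything else is the standard "solution space determines the operator" argument, so I expect the proof to be short once the determinant expansion is written out explicitly.
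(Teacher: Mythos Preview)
Your argument is correct and close in spirit to the paper's, but the logical order is reversed. The paper first derives \eqref{a_{i}(x)} directly: the equations $Df_1=0,\ldots,Df_n=0$ form a linear system in the unknowns $a_1(x),\ldots,a_n(x)$, which is solved by Cramer's rule; formula \eqref{D} then follows from the Laplace expansion of $W(f_1,\ldots,f_n,g)$ along the last row. You instead establish \eqref{D} first by the uniqueness principle (a monic operator of order $n$ is determined by an $n$-dimensional solution space), and only afterwards read off \eqref{a_{i}(x)} from the cofactor expansion. Both routes rest on the same determinantal identity; yours makes the uniqueness principle explicit, while the paper's leaves it implicit inside Cramer's rule and avoids having to argue that an order-$(n-1)$ operator cannot kill $n$ independent functions. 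One small convention point to align with the paper: since $W(g_1,\ldots,g_n)=\det\bigl(g_i^{(j-1)}\bigr)$ here, the function $g$ sits in the last \emph{row} of $W(f_1,\ldots,f_n,g)$, so your expansion should be along the last row rather than the last column.
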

\begin{proof}
The equations $Df_{1}=0\lc Df_{n}=0$ give a linear system of equations for the coefficients $a_{1}(x)\lc a_{n}(x)$. Solving this system by Cramer's rule yields formula \eqref{a_{i}(x)}. Formula \eqref{D} follows
from the last row expansion of the determinant in the numerator.
\end{proof}

\begin{prop}\label{fact}
The differential operator $\,D$ can be written in the following form:
\begin{equation}\label{D2}
D=\left(\frac{d}{dx} - \frac{g'_{1}}{g_{1}}\right)\left(\frac{d}{dx} - \frac{g'_{2}}{g_{2}}\right)\,\dots\,\left(\frac{d}{dx} - \frac{g'_{n}}{g_{n}}\right),
\end{equation}
where $g_{n}=f_{n}$, and
\begin{equation}
g_{i}=\frac{W(f_{n},f_{n-1}\lc f_{i} )}{W(f_{n}, f_{n-1}\lc f_{i+1})}\;,\qquad
i=1\lc n-1\,.\kern-.5em
\end{equation}
\end{prop}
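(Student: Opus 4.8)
The plan is to prove Proposition \ref{fact} by induction on the order $n$ of the operator $D$, peeling off one first-order factor from the left at each step. The base case $n=1$ is immediate: a monic first-order operator annihilating $f_1$ is $d/dx - f_1'/f_1$, which matches the formula with $g_1 = f_1$.

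For the inductive step, first I would observe that the leftmost factor we want to extract is $d/dx - g_1'/g_1$ with $g_1 = W(f_n, f_{n-1}\lc f_1)/W(f_n,\dots,f_2) = W(f_1\lc f_n)/W(f_2\lc f_n)$ (up to a sign from reordering rows, which I would track carefully). The key computation is to show that the operator $D \cdot (d/dx - g_1'/g_1)^{-1}$ — or rather, that $D$ factors as $\widehat D \cdot (d/dx - g_1'/g_1)$ where $\widehat D$ is a monic operator of order $n-1$. To identify $\widehat D$, note that for any function $h$, applying $d/dx - g_1'/g_1$ to $h$ gives $g_1 \cdot (h/g_1)'$; more usefully, I would use the classical fact that if $h_j = (d/dx - g_1'/g_1)f_j$ for $j = 2\lc n$, then by the Wronskian identity
$$
W(h_2\lc h_n) = W\!\left(g_1 (f_2/g_1)'\lc g_1(f_n/g_1)'\right) = g_1^{\,n-1}\,W\!\left((f_2/g_1)'\lc (f_n/g_1)'\right),
$$
and the latter relates to $W(f_1\lc f_n)$ via the standard determinant manipulation $W(f_1, f_2\lc f_n) = f_1^n\, W((f_2/f_1)'\lc(f_n/f_1)')$ — this is essentially formula \eqref{D} of Lemma \ref{wronskian formula for differential op} applied to the operator $d/dx - f_1'/f_1$. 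So with $g_1$ chosen as above (a suitable ratio making things work out), the functions $h_2\lc h_n$ are linearly independent, and I would define $\widehat D$ to be the monic order-$(n-1)$ operator annihilating them, given by Lemma \ref{wronskian formula for differential op}.

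Then I must check two things: that $\widehat D \cdot (d/dx - g_1'/g_1) = D$, and that the inductive hypothesis applied to $\widehat D$ with solutions $h_2\lc h_n$ produces exactly the remaining factors $(d/dx - g_2'/g_2)\cdots(d/dx - g_n'/g_n)$ with the $g_i$ as stated. For the first point: both $\widehat D \cdot (d/dx - g_1'/g_1)$ and $D$ are monic of order $n$ and annihilate $f_1\lc f_n$ (the operator kills $f_1$ because $(d/dx - g_1'/g_1)f_1 = 0$ when $g_1$ is proportional to $f_1$ — here I would pick the normalization $g_1 = f_1$ for the very first factor to keep this transparent, reconciling with the $g_i$ formula afterward, or alternatively verify directly that $(d/dx-g_1'/g_1)f_1=0$ for the stated $g_1$), and kills $f_j$ for $j\ge 2$ because it sends $f_j \mapsto h_j \mapsto \widehat D h_j = 0$; since an operator is determined by its kernel together with being monic of the given order, they coincide. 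For the second point, I would compute that the Wronskian ratios defining the factors of $\widehat D$ in terms of $h_2\lc h_n$ telescope correctly: $W(h_n,\dots,h_i)/W(h_n,\dots,h_{i+1})$ should equal $g_i$, using $h_j = g_1(f_j/g_1)'$ and the cofactor/Jacobi identities for Wronskians (Sylvester's identity).

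The main obstacle is bookkeeping: getting the signs and the precise normalization of $g_1$ right so that $(d/dx - g_1'/g_1)$ genuinely annihilates $f_n$ (note the stated formula has $g_n = f_n$ and builds the $g_i$ from $f_n$ downward, so the induction should strip factors so that the \emph{last} factor is $d/dx - f_n'/f_n$; one cleanly does this by extracting from the \emph{right} instead, or equivalently by reversing the roles — I would set up the induction to remove the rightmost factor $d/dx - f_n'/f_n$ first, replacing $D$ by an operator of order $n-1$ acting on the quotient space). Concretely, the cleanest route: show $D = \widehat D\,(d/dx - f_n'/f_n)$ where $\widehat D$ annihilates $W(f_n, f_j)/f_n$ for $j = 1\lc n-1$, then apply Sylvester's determinant identity to see that these functions, call them $\hat f_j$, satisfy $W(\hat f_n,\dots,\hat f_{i+1})$-ratios reproducing the $g_i$; the Sylvester identity $W(f_n, f_{n-1}\lc f_i)\cdot W(f_n) $-type relations are exactly what make the telescoping work, and verifying this identity in the form needed is the one genuinely computational step.
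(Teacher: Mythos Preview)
Your inductive strategy is the right one and matches the paper's, but the execution is tangled in a way that misses the simple route. Two concrete issues: first, you write ``$D$ factors as $\widehat D\cdot(d/dx-g_1'/g_1)$'' with the $g_1$-factor on the right, but in the claimed factorization \eqref{D2} the $g_1$-factor is the \emph{leftmost} one. Second, your hope that $(d/dx-g_1'/g_1)f_1=0$ is simply false for the stated $g_1$: here $g_1=W(f_n,\ldots,f_1)/W(f_n,\ldots,f_2)$, which is not proportional to $f_1$ (it is $g_n=f_n$ that equals a single $f_j$). Sensing this, you retreat to peeling the \emph{rightmost} factor $(d/dx-f_n'/f_n)$, which does kill $f_n$, but then you must pass to transformed functions $\hat f_j=W(f_n,f_j)/f_n$ and invoke Sylvester-type Wronskian identities to recover the $g_i$. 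That route can be made to work, but it is exactly the ``genuinely computational step'' you flag, and it is unnecessary.

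The paper's argument peels from the left and avoids all of this. The key observation you are missing is that for $i\ge 2$ the functions $g_i$ are defined purely in terms of $f_2,\ldots,f_n$; they do not involve $f_1$ at all. Hence, by the induction hypothesis applied directly to the solutions $f_2,\ldots,f_n$ (no transformation needed), the product $D_2=\prod_{i=2}^n(d/dx-g_i'/g_i)$ is precisely the monic order-$(n-1)$ operator annihilating $f_2,\ldots,f_n$. Setting $D_1=(d/dx-g_1'/g_1)\,D_2$, we get $D_1f_i=0$ for $i\ge 2$ immediately. For $i=1$, formula \eqref{D} of Lemma~\ref{wronskian formula for differential op} gives $D_2 f_1=W(f_n,\ldots,f_2,f_1)/W(f_n,\ldots,f_2)=g_1$, so $(d/dx-g_1'/g_1)\,D_2 f_1=(d/dx-g_1'/g_1)\,g_1=0$. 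Since $D_1$ and $D$ are both monic of order $n$ with kernel spanned by $f_1,\ldots,f_n$, they coincide. No Sylvester identity, no transformed basis, no sign bookkeeping.
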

\begin{proof}
Denote by $D_{1}$ the differential operator in the right hand side of \eqref{fact}. By Lemma \ref{wronskian formula for differential op} a monic differential operator is uniquely determined by its kernel. Therefore it is sufficient to prove that $D_{1}f_{i}=0$ for all $i=1\lc n$. We will prove it by induction on $n$.

If $n=1$, then $\,g_{1}=f_{1}$ and $\,D_{1}f_{1}=\left(d/dx-f'_{1}/f_{1}\right)f_{1}=0$.

Let $D_{2}$ be the monic differential operator whose kernel is spanned by $f_{2}\lc f_{n}$. By induction assumption, $$D_{2}=\left(\frac{d}{dx} - \frac{g'_{2}}{g_{2}}\right)\left(\frac{d}{dx} - \frac{g'_{3}}{g_{3}}\right)\,\dots\,\left(\frac{d}{dx} - \frac{g'_{n}}{g_{n}}\right).$$

Since $D_{1}=\left(d/dx-g'_{1}/g_{1}\right)D_{2}$, we have $D_{1}f_{i}=0$ for $i=2\lc n$.
\vvn.1>
Formula \eqref{D} yields $D_{2}f_{1}=g_{1}$, thus $D_{1}f_{1}=0$ as well.
\end{proof}

\subsection{Formal conjugate differential operator}
\label{s62}
Given a differential operator \eqref{D1}, define its formal conjugate by the formula: $$D^{\dagger}h(x)=\left(-\frac{d}{dx}\right)^{n}h(x)+\sum_{i=1}^{n}\left(-\frac{d}{dx}\right)^{n-i}\!\bigl(a_{i}(x)h(x)\bigr)\,.$$

Clearly, the formal conjugation is an antihomomorphism of the algebra of differential operators. In particular, if $\,D$ is given by formula \eqref{fact}, then
\begin{equation}\label{ddagger}
D^{\dagger}=(-1)^{n}\left(\frac{d}{dx}+\frac{g'_{n}}{g_{n}}\right)\left(\frac{d}{dx}+\frac{g'_{n-1}}{g_{n-1}}\right)\,\dots\,\left(\frac{d}{dx}+\frac{g'_{1}}{g_{1}}\right).
\end{equation}

\begin{prop}\label{Kernel for conjugate}
Let $$h_{i}\,=\,\frac{W(f_{1}\lc
f_{i-1},f_{i+1}
\lc f_{n})}{W(f_{1}\lc f_{n})}\;,$$
Then
the functions $\,h_{1}\lc h_{n}$ are linearly independent, and
$\,D^{\dagger}h_{i}=0$ for all $\,i=1\lc n$.
\end{prop}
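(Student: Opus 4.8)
The plan is to exploit the factorization of $D$ obtained in Proposition \ref{fact} together with the explicit formula \eqref{ddagger} for $D^\dagger$. Since $D^\dagger$ is monic of order $n$ (up to the sign $(-1)^n$), by Lemma \ref{wronskian formula for differential op} it is determined by its kernel, so it suffices to show that each $h_i$ lies in $\ker D^\dagger$ and that the $h_i$ are linearly independent.

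First I would treat linear independence. The natural approach is to observe that $h_i = (-1)^{n-i} W_{n-i+1}(f_1\lc f_n)/W(f_1\lc f_n)$ up to relabeling, i.e. the $h_i$ are (signed) ratios of the ``deleted-column'' Wronskians to the full Wronskian; alternatively, one can argue directly. Concretely, write $\hat f_i = (f_1\lc f_{i-1},f_{i+1}\lc f_n)$ for the tuple with $f_i$ omitted, so $h_i = W(\hat f_i)/W(f_1\lc f_n)$. Suppose $\sum_i c_i h_i = 0$; multiplying by $W(f_1\lc f_n)$ gives $\sum_i c_i W(\hat f_i) = 0$. Expanding each $W(\hat f_i)$ along appropriate rows, or better, recognizing $\sum_i c_i W(\hat f_i)$ (with suitable signs absorbed) as a Wronskian-type determinant of the $f_j$ together with the auxiliary function $\sum_i \pm c_i$-weighted data, one sees the relation forces all $c_i=0$ because the $f_j$ themselves are linearly independent. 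I expect the clean way is: $(-1)^{n-i}h_i$ is, up to the common denominator, the cofactor expansion coefficient, and the $h_i$ being dependent would contradict the nonvanishing of $W(f_1\lc f_n)$.

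Next, the main step: $D^\dagger h_i = 0$. Here I would use the classical fact that $D^\dagger$ annihilates $h_i$ iff the ``variation of parameters / adjoint'' relation holds, which is most transparent through the factorization. From \eqref{ddagger}, $D^\dagger = (-1)^n(d/dx + g_n'/g_n)\cdots(d/dx + g_1'/g_1)$. One checks that $(d/dx + p'/p)$ applied to $1/p$ gives $0$, and more generally the adjoint factors annihilate reciprocals of the $g_j$ built from partial Wronskians. Rather than pushing the reciprocal through all $n$ factors, the cleaner route is to use formula \eqref{D}: for any $g$, $Dg = W(f_1\lc f_n, g)/W(f_1\lc f_n)$, and there is a dual identity expressing $D^\dagger$ acting on $h = W(f_1\lc f_{i-1},f_{i+1}\lc f_n,\ast)$-type quantities. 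Specifically, I would invoke (or prove via the Jacobi/Desnanot identity for determinants) the bilinear identity
\begin{equation*}
\sum_{i=1}^n f_i\, h_i = 0,\qquad \sum_{i=1}^n f_i^{(n-1)} h_i = W(f_1\lc f_n)\cdot(\text{const}),
\end{equation*}
i.e. the vectors $(f_i, f_i', \ldots, f_i^{(n-1)})$ and $(h_i, h_i', \ldots)$ are ``biorthogonal'' in the sense needed for the adjoint equation. Concretely, $D^\dagger h_i = 0$ is equivalent to the statement that the Wronskian $W(h_1\lc h_n)$ is (a nonzero multiple of) $W(f_1\lc f_n)^{-(n-1)}$ and that the span of the $h_i$ is exactly the kernel of the operator whose factorization is the reverse-order adjoint of that of $D$; this follows from Abel's identity and the Jacobi determinant identity relating minors of a matrix to minors of its inverse (adjugate).

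\textbf{Main obstacle.} The routine-but-delicate point is the determinant bookkeeping: establishing that $h_i = W(\hat f_i)/W(f_1\lc f_n)$ really does satisfy $D^\dagger h_i = 0$ requires the Jacobi (Desnanot–Jacobi / Sylvester) identity applied to the Wronskian matrix and its derivatives, keeping track of signs $(-1)^{n-i}$ and the order-reversal in \eqref{ddagger}. I expect the author handles this either by the direct computation $D^\dagger h_i = W(h_1\lc \widehat{h_i}\lc h_n, h_i)/W(h_1\lc h_n)$-style reasoning applied to a second factorization, or by the slick observation that $h_i$ are, up to the scalar $W(f_1\lc f_n)^{-1}$ times signs, the entries of the inverse Wronskian matrix's first column, whence the adjoint equation is automatic from the relation $\big(W^{-1}\big)' = -W^{-1}W'W^{-1}$. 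I would write the proof along the latter line: exhibit the $h_i$ as (signed) entries of the top row of the adjugate of the Wronskian matrix divided by $W(f_1\lc f_n)$, then verify the defining relations of $\ker D^\dagger$ by differentiating this identity and using Cramer's rule, exactly paralleling the proof of Lemma \ref{wronskian formula for differential op}.
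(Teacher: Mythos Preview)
Your proposal sketches a plausible route but misses the paper's much simpler argument, and in one place misreads the situation.

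For $D^\dagger h_i = 0$, you correctly note that $(d/dx + p'/p)(1/p) = 0$, but then abandon the factorization approach because you think one would have to ``push the reciprocal through all $n$ factors.'' In fact no pushing is needed: from the definition of $g_1$ in Proposition~\ref{fact} one has $h_1 = (-1)^{n-1}/g_1$, so the \emph{rightmost} factor $(d/dx + g_1'/g_1)$ in \eqref{ddagger} already annihilates $h_1$, giving $D^\dagger h_1 = 0$ immediately. The paper then observes that the factorization in Proposition~\ref{fact} depends on the chosen ordering of the basis $f_1,\ldots,f_n$, while $D$ (and hence $D^\dagger$) does not; re-running the argument with a permutation $\sigma$ satisfying $\sigma(1)=i$ yields $D^\dagger h_i = 0$ for every $i$. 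This permutation trick is the key idea you did not find.

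For linear independence, the paper does essentially what you suggest at the end: it writes $p_i = W(\hat f_i)$ as the $(i,n)$-minor $b_{in}$ of the Wronskian matrix $A$, shows by induction (using $f_i^{(n)} = -\sum_l a_l f_i^{(n-l)}$) that $p_i^{(j)}$ equals $b_{i,n-j}$ plus a combination of lower minors independent of $i$, and deduces $W(h_1,\ldots,h_n) = (-1)^{n(n-1)/2}/W(f_1,\ldots,f_n)$ from the adjugate identity. So on this half your instinct matches the paper.

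The Jacobi/Desnanot and biorthogonality ideas you propose for the annihilation part could in principle be made to work, but they are considerably more labor than the two-line permutation argument, and your proposal does not actually carry them out: the ``bilinear identity'' you display is not quite correct as stated, and ``this follows from Abel's identity and the Jacobi determinant identity'' is an assertion rather than a proof.
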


\begin{proof}
Since $h_{1}=(-1)^{n-1}/g_{1}$, we have $D^{\dagger}h_{1}=0$ by formula \eqref{ddagger}.

Let $\sigma$ be a permutation of $\{1\lc n\}$. Take a new sequence $f_{\sigma (1)}\lc f_{\sigma (n)}$ of $n$ linearly independent solutions of the equation $Df=0$. Then similarly to the consideration above, we get
$$D^{\dagger}=(-1)^{n}\left(\frac{d}{dx}+\frac{g'_{n,\sigma}}{g_{n,\sigma}}\right)\left(\frac{d}{dx}+\frac{g'_{n-1,\sigma}}{g_{n-1,\sigma}}\right)\,\dots\,\left(\frac{d}{dx}+\frac{g'_{1,\sigma}}{g_{1,\sigma}}\right),$$
cf \eqref{ddagger}, where $g_{n,\sigma}=f_{\sigma (n)}$ and
$$g_{i, \sigma}=\frac{W(f_{\sigma (n)},f_{\sigma (n-1)}\lc f_{\sigma (i)})}{W(f_{\sigma (n)},f_{\sigma (n-1)}\lc f_{\sigma (i+1)})},\quad\quad i=1\lc n-1.$$
Taking $\sigma$ such that $\sigma (1) = i$, we get $D^{\dagger}h_{i}=0$.

To prove the linear independence of the functions $h_{1}\lc h_{n}$,
we will show that:
\begin{equation}
W(h_{1}\lc h_{n})\,=\,\frac{(-1)^{n(n-1)/2}}{W(f_{1}\lc f_{n})}\;.
\end{equation}

Let $\,p_{i}=W(f_{1}\lc
f_{i-1},f_{i+1}
\lc f_{n})$.
Denote by
$b_{ij}$ the $ij$-minor of the matrix $A=(f_{i}^{(j-1)})_{i.j=1}^{n}$.
Then we have $\,p_{i}=b_{in}$ and $\,p'_{i}=b_{i,n-1}$.

Since
$Df_i=0$ for any $\,i=1\lc n$,
we have $f_{i}^{(n)}=\sum_{l=1}^{n}a_{l}f_{i}^{(n-l)}$, where the functions
$a_1\lc a_n$ do not depend on $i$. Using this observation, one can check that
\begin{equation}
b'_{i,n-j}=b_{i,n-j-1}+(-1)^{j-1}a_{j+1}b_{in} + a_{1} b_{i,n-j}.
\end{equation}
Therefore,
by induction on $j$,
we have
\begin{equation}\label{p}
p_{i}^{(j)}=b_{i,n-j} + \sum_{k=0}^{j-1}C_{jk}b_{i,n-k}\,,\qquad i=1\lc n\,,
\kern-1em
\end{equation}
for certain functions $C_{jk}$, that do not depend on $i$.
Hence,
\begin{equation}
W(p_{1}\lc p_{n})=
\det(p_{i}^{(j)})_{\substack{\;\!i=1\lc n\hfill\\j=0\lc n-1}}=
\det(b_{i,n-j})_{\substack{\;\!i=1\lc n\hfill\\j=0\lc n-1}}
\end{equation}
and
\begin{align*}
W(h_{1}\lc h_{n})&{}=W\Bigl(\frac{p_{1}}{W(f_{1}\lc f_{n})}\lc\frac{p_{n}}{W(f_{1}\lc f_{n})}\Bigr)=
\frac{W(p_{1}\lc p_{n})}{(W(f_{1}\lc f_{n}))^{n}}
\\[6pt]
&{}=\frac{\det(b_{i,n-j})}{(W(f_{1}\lc f_{n}))^{n}}=
(-1)^{n(n-1)/2}\,\frac{\det((-1)^{i+j}\,b_{i,j})}{(W(f_{1}\lc f_{n}))^{n}}
\\[5pt]
&{}=(-1)^{n(n-1)/2}\,\frac{\det(A^{-1}\det A)}{(\det A)^{n}} = \frac{(-1)^{n(n-1)/2}}{W(f_{1}\lc f_{n})}\;.
\\[-32pt]
\end{align*}
\end{proof}
\subsection{Quotient differential operator}
\label{s63}
Let $D$ and $\Dh$ be differential operators
such that $\ker D\subset\ker \Dh $.
Then there is
a differential operator $\check{D}$, such that $\Dh=\check{D}D$. For instance, it can be seen from the factorization formula \eqref{D2}. We will call $\check{D}$ the \textit{quotient differential operator}.

Let $f_{1},f_{2}\lc f_{n}$ be a basis of $\,\ker D$ and
$f_{1},f_{2}\lc f_{n}, h_1\lc h_k$ be a basis of $\,\ker\hat D$.
Define functions $\phi_1\lc\phi_k$ by the formula
$$\phi_{a}=\frac{W(f_{1}\lc f_{n},h_{1}\lc
h_{a-1},h_{a+1}
\lc h_{k})}{W(f_{1}\lc f_{n},h_{1}\lc h_{k})}\;.
$$

\begin{prop}\label{kernel for quotient conjugate}
The functions $\phi_1\lc\phi_k$ are linearly independent, and
$\,\check{D}^{\dagger}\phi_{a}=0$ for all $a=1\lc k$.
\end{prop}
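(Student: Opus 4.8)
The plan is to mimic the proof of Proposition~\ref{Kernel for conjugate}, replacing the Wronskian $W(f_1\lc f_n)$ there by $W(f_1\lc f_n,h_1\lc h_k)$ and exploiting the factorization $\Dh=\check D\,D$. First I would recall that by Proposition~\ref{fact} applied to the basis $f_1\lc f_n,h_1\lc h_k$ of $\ker\Dh$ (listed in the order $h_k,h_{k-1}\lc h_1,f_n\lc f_1$, so that the $h$'s are factored off last), the operator $\Dh$ factors as a product of $n+k$ first-order operators whose first $k$ factors assemble into a monic operator of order $k$ annihilating exactly the images $\check D h_1\lc\check D h_k$; by uniqueness of monic operators with a given kernel (Lemma~\ref{wronskian formula for differential op}) this order-$k$ operator is $\check D$. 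In particular $\check D$ is monic of order $k$, and formula \eqref{D} gives $\check D h_a=W(f_1\lc f_n,h_a)/W(f_1\lc f_n)$ up to the relevant leading Wronskian — more precisely the explicit factorization shows $\check D h_a$ is expressible through the $g_{i,\sigma}$-type ratios.

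Next, for the linear independence and the annihilation statement, I would run the permutation trick from the proof of Proposition~\ref{Kernel for conjugate} \emph{inside} $\check D$: for a permutation $\sigma$ of $\{1\lc k\}$, reordering the $h$'s as $h_{\sigma(1)}\lc h_{\sigma(k)}$ (keeping $f_1\lc f_n$ fixed at the far end) gives another factorization $\check D=(-1)^{k}\bigl(\tfrac d{dx}+\cdots\bigr)\dots$ by \eqref{ddagger}, from which taking $\sigma(1)=a$ yields $\check D^{\dagger}\phi_a=0$; the base case $\phi_1=(-1)^{?}\big/g$ for the appropriate first factor $g$ follows as before. For linear independence I expect the clean route is to prove the analog of the Wronskian identity in Proposition~\ref{Kernel for conjugate}: writing $p_a=W(f_1\lc f_n,h_1\lc h_{a-1},h_{a+1}\lc h_k)$, one has
\begin{equation*}
W(\phi_1\lc\phi_k)=\frac{W(p_1\lc p_k)}{\bigl(W(f_1\lc f_n,h_1\lc h_k)\bigr)^{k}},
\end{equation*}
and $W(p_1\lc p_k)$ is, up to sign, a maximal minor of the matrix $(f_i^{(j-1)};h_a^{(j-1)})$ of size $(n+k)$ restricted to the last $k$ columns. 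The key algebraic input — exactly as in the proof of Proposition~\ref{Kernel for conjugate}, using that $\Dh f_i=0$ and $\Dh h_a=0$ so each $(n+k)$-th derivative is a fixed linear combination of lower ones — is that differentiating $p_a$ shifts the relevant minor-column by one modulo lower-order columns, so $W(p_1\lc p_k)=\pm\det$ of the bottom-right $k\times k$ block of the cofactor matrix, which equals $\pm\det A_{n+k}^{-1}\det A_{n+k}=\pm(W(f_1\lc f_n,h_1\lc h_k))^{k-1}$, giving $W(\phi_1\lc\phi_k)=\pm1/W(f_1\lc f_n,h_1\lc h_k)\neq0$.

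The main obstacle I anticipate is bookkeeping, not conceptual: one must (i) pin down the precise constant and sign relating $\check D h_a$ and $W(f_1\lc f_n,h_a)/W(f_1\lc f_n)$ coming from the ordering of factors in Proposition~\ref{fact}, and (ii) carry the induction on derivatives of $p_a$ when the ``extra'' rows are the $f_i$'s and the ``moving'' rows are built from the $h_a$'s — here the cofactor/minor identity is the same as in Proposition~\ref{Kernel for conjugate} but indexed by an $(n+k)$-row matrix with only the last $k$ columns varying, so one should check the index shifts carefully. Once the Wronskian identity is in hand, nonvanishing of $W(f_1\lc f_n,h_1\lc h_k)$ (which holds since $f_1\lc f_n,h_1\lc h_k$ is a basis of $\ker\Dh$) immediately gives both the linear independence of $\phi_1\lc\phi_k$ and, via the permuted factorizations, $\check D^{\dagger}\phi_a=0$ for all $a$.
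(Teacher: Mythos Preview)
Your plan is workable but takes a longer route than the paper, and it contains one concrete error. The paper does not rerun the factorization/permutation argument at all: it observes that the functions $\tilde h_a:=D h_a$, $a=1\lc k$, are linearly independent and span $\ker\check D$ (since $\check D\,\tilde h_a=\Dh h_a=0$ and the order of $\check D$ is $k$), and then applies Proposition~\ref{Kernel for conjugate} \emph{directly} to $\check D$ with this basis. The only remaining step is the identity
\[
\frac{W(\tilde h_1\lc\widehat{\tilde h_a}\lc\tilde h_k)}{W(\tilde h_1\lc\tilde h_k)}\,=\,\phi_a\,,
\]
which follows from formula~\eqref{D} (giving $\tilde h_a=W(f_1\lc f_n,h_a)/W(f_1\lc f_n)$) together with the standard Wronskian identity $W\bigl(W(f,g_1)/W(f)\lc W(f,g_m)/W(f)\bigr)=W(f,g_1\lc g_m)/W(f)$, applied once with the $g$'s equal to $h_1\lc\widehat{h_a}\lc h_k$ and once with all the $h$'s, then taking the ratio. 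This bypasses all of the bookkeeping you flag in (i) and (ii).

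Your direct approach --- rerunning the proof of Proposition~\ref{Kernel for conjugate} inside the $(n+k)$-row Wronskian matrix --- does go through for the annihilation $\check D^\dagger\phi_a=0$, but your Wronskian computation for linear independence has an error. The determinant of the bottom-right $k\times k$ block of the cofactor matrix of an $(n+k)\times(n+k)$ matrix $A$ is \emph{not} $(\det A)^{k-1}$; by Jacobi's complementary-minor identity it equals $\pm(\det A)^{k-1}$ times the top-left $n\times n$ minor of $A$, which here is $W(f_1\lc f_n)$. Hence the correct formula is
\[
W(\phi_1\lc\phi_k)\,=\,\pm\,\frac{W(f_1\lc f_n)}{W(f_1\lc f_n,h_1\lc h_k)}\,,
\]
not $\pm 1/W(f_1\lc f_n,h_1\lc h_k)$. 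Since both Wronskians are nonzero, linear independence still follows once this is fixed; but the paper's reduction to Proposition~\ref{Kernel for conjugate} gets there with much less work.
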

\begin{proof}
Set
$\,\tilde{h}_{a}=Dh_{a}$, $a=1\lc k$.
The functions $\tilde{h}_{1}\lc\tilde{h}_{k}$ are linearly independent. Indeed, if
there are
numbers $c_{1}\lc c_{k}$,
not all equal to zero, such that
$c_1\tilde h_1\lsym+c_k\tilde h_k=0$, then $D(c_1h_1\lsym+c_kh_k)=0$.
This means that $c_1h_1\lsym+c_kh_k$ belongs to the span of $f_{1}\lc f_{n}$
contrary to the linear independence of the functions
$f_{1}\lc f_{n},h_{1}\lc h_{k}$.

Formula \eqref{D} yields $\,\tilde{h}_{i}=W(f_{1}\lc f_{n},h_{i})/W(f_{1}\lc f_{n})$. Using identities for Wronski\-ans, see \cite{MV}, one can check that
$$\frac{W(\tilde{h}_{1}\lc
\tilde h_{a-1},\tilde h_{a+1}
\lc\tilde{h}_{k})}{W(\tilde{h}_{1}\lc\tilde{h}_{k})}=
\frac{W(f_{1}\lc f_{n},h_{1}\lc
h_{a-1},h_{a+1}
\lc h_{k})}{W(f_{1}\lc f_{n},h_{1}\lc h_{k})}=\phi_{a}\,.$$

Since $\check{D}\tilde{h}_{a}=\hat Dh_a=0$ for all $a=1\lc k$, the functions
$\tilde{h}_{1}\lc\tilde{h}_{k}$ form a basis of $\,\ker\check D$, because
the order of $\check{D}$ equals $k$. Since
$$\phi_{a}=\frac{W(\tilde{h}_{1}\lc
\tilde h_{a-1},\tilde h_{a+1}
\lc\tilde{h}_{k})}{W(\tilde{h}_{1}\lc\tilde{h}_{k})}\;,$$
Proposition \ref{kernel for quotient conjugate} follows from Proposition \ref{Kernel for conjugate} applied to $\check{D}$.
\end{proof}
\subsection{Quotient differential operator and spaces of quasi-exponentials}
\label{this}
Let $V$ be a space of quasi-exponentials with the data $(\bar{\mu},\bar{\lambda};\bar{\alpha},\bar{z})$.
For Section \ref{this} we will assume that
the data $(\bar{\mu},\bar{\lambda};\bar{\alpha},\bar{z})$ are reduced, that is,
the sequences $\bar\mu$ and $\bar\lambda$ do not contain zero partitions.
For each $i=1\lc n$,
denote $n_{i}=(\mu^{(i)})'_{1}$ and $p_{i}=\mu_{1}^{(i)}+n_{i}$.

Introduce also a larger space
$\Vh$ spanned by the functions $\,x^{p}e^{\alpha _{i}x}$ for all
$\,i=1\lc n$, and $p=0\lc p_{i}-1$. Denote
\begin{gather*}
W({\Vh})=W(e^{\alpha_{1}x},xe^{\alpha_{1}x}\lc x^{p_{1}-1}e^{\alpha_{1}x},
\,\ldots\,,\,
e^{\alpha_{n}x}, xe^{\alpha_{n}x}\lc x^{p_{n}-1}e^{\alpha_{n}x})\,,
\\[2pt]
W_{ij}(\Vh)=W({}\dots , \widehat{x^{j}e^{\alpha_{i}x}},\dots{})\,.
\end{gather*}
The functions in the second line are the same except
the function $x^{j}e^{\alpha_{i}x}$ is omitted.
\begin{lem}\label{denomnom} The following holds:
\begin{equation}\label{denom}
W(\Vh)=e^{\sum_{i=1}^{n}p_{i}\alpha_{i}x}
\,\prod_{i=1}^n\,\prod_{s=1}^{p_i-1}\,
s!\,\prod_{1\leq i<j\leq n}(\alpha_{j}-\alpha_{i})^{p_{i}p_{j}}\,,
\end{equation}
\begin{equation}\label{num}
W_{ij}(\Vh)=e^{\sum_{l=1}^{n}(p_{l}-\delta_{il})\alpha_{l}x}\,r_{ij}(x)
\,\prod_{l=1}^n\prod_{\substack{s=1\\(l,s)\ne(i,j)}}^{p_l-1}\!\!
s!\,\prod_{1\leq l<l'\leq n}(\alpha_{l'}-\alpha_{l})^{(p_{l}-\delta_{li})(p_{l'}-\delta_{l'i})}\,,
\end{equation}
where $r_{ij}(x)$ is a monic polynomial in $\,x$ and $\;\deg r_{ij}=p_{i}-j-1$.
\end{lem}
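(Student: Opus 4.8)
The plan is to compute both Wronskians directly, exploiting the fact that $\Vh$ has a basis of the special form $x^{p}e^{\alpha_i x}$, which makes all derivatives explicit. First I would observe that $\frac{d^{q}}{dx^{q}}\bigl(x^{p}e^{\alpha x}\bigr)=e^{\alpha x}\sum_{r}\binom{q}{r}(p)_{r}\,\alpha^{q-r}x^{p-r}$, so that every entry of the Wronski matrix is $e^{\alpha_i x}$ times a polynomial in $x$. Factoring $e^{\alpha_i x}$ out of the block of rows (or columns, depending on the convention) corresponding to the exponent $\alpha_i$ produces the prefactor $e^{\sum_i p_i\alpha_i x}$ in \eqref{denom}, and analogously $e^{\sum_l (p_l-\delta_{il})\alpha_l x}$ in \eqref{num} once the row $x^{j}e^{\alpha_i x}$ is deleted. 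What remains in each case is the determinant of a matrix whose entries are polynomials in $x$.

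Next I would identify that leftover determinant as a confluent (generalized) Vandermonde determinant. The standard evaluation gives, for the full space,
$$
\det\Bigl(\tfrac{d^{q}}{dx^{q}}\bigl(x^{p}e^{\alpha_i x}\bigr)\,e^{-\alpha_i x}\Bigr)
=\pm\prod_{i=1}^{n}\prod_{s=0}^{p_i-1}s!\;\prod_{1\le i<j\le n}(\alpha_j-\alpha_i)^{p_ip_j},
$$
which is the $x$-independent constant appearing in \eqref{denom}; here one uses that the Wronskian of functions $\{x^{p}e^{\alpha_i x}\}$ is constant (its logarithmic derivative computes the coefficient of the differential operator annihilating $\Vh$, which has constant coefficients $\prod_i(d/dx-\alpha_i)^{p_i}$). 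For $W_{ij}(\Vh)$ the same scheme applies to the truncated list; the only subtlety is that the exponent $\alpha_i$ now carries multiplicity $p_i-1$ with one interior power $x^{j}$ missing, so the resulting determinant is no longer constant — it is $r_{ij}(x)$ times the confluent Vandermonde factor with exponents $p_l-\delta_{li}$. The assertion that $r_{ij}(x)$ is monic of degree $p_i-j-1$ I would extract by a degree count: deleting the row with $x^{j}e^{\alpha_i x}$ raises the top-degree behavior in exactly the way that accounts for the missing power, and tracking the highest-order term through cofactor expansion (or through the relation $W_{ij}(\Vh)/W(\Vh)=\pm$ a coefficient of $D_{\Vh}^{-1}$, cf.\ Lemma~\ref{wronskian formula for differential op} and formula \eqref{D}) pins down both the degree and the leading coefficient.

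The main obstacle is the bookkeeping of signs and of the precise shape of the confluent Vandermonde factor in \eqref{num}: one must verify that deleting the single function $x^{j}e^{\alpha_i x}$ changes each multiplicity $p_l$ to $p_l-\delta_{li}$ uniformly, that the factorials reorganize into $\prod_{l}\prod_{s\ne j\text{ (when }l=i)}s!$ as written, and that all sign factors $(-1)^{\ast}$ combine into the stated monic normalization of $r_{ij}$. I would handle this by first doing the case $n=1$ (a pure confluent Vandermonde, where $r_{1j}(x)$ is computed explicitly as a monic polynomial of degree $p_1-j-1$ up to the known factorial constant), then reducing the general case to it by the multiplicativity of the Vandermonde structure across distinct exponents $\alpha_i$, i.e.\ by a block/Laplace expansion along the rows grouped by exponent. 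Everything else is a routine, if lengthy, determinant computation.
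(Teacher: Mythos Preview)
Your route differs from the paper's, and for \eqref{denom} it is fine: once you factor out the exponentials the remaining determinant is the standard confluent Vandermonde, and any of the usual evaluations gives the constant on the right. (One slip: the Wronskian of the $x^{p}e^{\alpha_i x}$ is not constant; its logarithmic derivative equals $-a_1=\sum_i p_i\alpha_i$, so it is that exponential times a constant, which is exactly what you want.) The paper instead argues by induction on $\sum_i(p_i-1)$: it inserts an extra function $e^{\beta x}$ into the list, applies \eqref{denom} for the enlarged system (which has one more exponent of multiplicity~$1$), and then differentiates $p_l$ times in $\beta$ at $\beta=\alpha_l$; this raises the multiplicity of $\alpha_l$ by one. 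The same trick, now inducting on $p_i-j-1$, is what the paper uses for \eqref{num}.

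For \eqref{num} your proposal has a real gap. The ``block/Laplace expansion along the rows grouped by exponent'' does not reduce the computation to the $n=1$ case, because the Wronski matrix is not block-triangular in those row groups: every row has nontrivial entries in every column, so the Laplace expansion produces a large sum of cross terms rather than a product. More to the point, once you delete $x^{j}e^{\alpha_i x}$ with $j<p_i-1$, the remaining list for the exponent $\alpha_i$ has a gap in the powers of $x$, so it is \emph{not} a confluent Vandermonde system in the usual sense and the ``multiplicativity across distinct exponents'' you invoke does not apply directly. Consequently, neither the degree $p_i-j-1$ nor the monicness of $r_{ij}$ follows from what you wrote; a naive top-degree count fails because the leading-term matrix is highly degenerate (rows with the same $\alpha_l$ have identical leading coefficients), and the relation to $D_{\Vh}^{-1}$ you allude to is not supplied by Lemma~\ref{wronskian formula for differential op} or formula~\eqref{D}. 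The paper's insertion-and-differentiation device avoids all of this: differentiating the already-known $W_{(\beta,i,j)}$ in $\beta$ at $\alpha_i$ increases $p_i$ by one while keeping $j$ fixed, and one reads off directly that the polynomial factor picks up exactly one extra degree and stays monic.
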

\begin{proof}
We will prove \eqref{denom} by induction on $\sum_{i=1}^{n}(p_{i}-1)=P$.
For $P=0$, equality \eqref{denom} becomes
$$W(e^{\alpha_{1}x},e^{\alpha_{2}x}\lc e^{\alpha_{n}x})=e^{\sum_{i=1}^{n}\alpha_{i}x}\prod_{1\leq i<j\leq n}(\alpha_{j}-\alpha_{i}),$$
which is equivalent to the Vandermonde determinant formula.

Fix $P_{0}\in\Z_{\geq 0}$. Suppose that \eqref{denom} is true for all $n$ and all $p_{1}\lc p_{n}$ such that $\sum_{i=1}^{n}(p_{i}-1)=P_{0}$. We will indicate the dependence of the space $\Vh$ on $p_{1}\lc p_{n}$ as follows: $\Vh^{p_{1}\lc p_{n}}$.

Fix $p_{1}\lc p_{n}$ such that $\sum_{i=1}^{n}(p_{i}-1)=P_{0}$. For each $l=1\lc n$, let $W_{(\beta,l)}$ be the Wronski determinant obtained from $W(\Vh^{p_{1}\lc p_{n}})$ by inserting
the exponential
$e^{\beta x}$ after the function $x^{p_{l}-1}e^{\alpha_{l} x}$.
Notice that
$(\partial/\partial\beta)^{p_{l}}|_{\beta=\alpha_{l}} W_{(\beta,l)}=W(\Vh^{p'_{1}\lc p'_{n}})$,
where $p'_{i}=p_{i}$ if $\,i\neq l$ and $p'_{l}=p_{l}+1$.

By the induction assumption, we have
$$
W_{(\beta,l)}\,=\,e^{\sum_{i=1}^{n}(p_{i}\alpha_{i}+\beta)x}
\,\prod_{i=1}^n\,\prod_{s=1}^{p_i-1}\,
s!\,\prod_{1\leq i<j \leq n}(\alpha_{j}-\alpha_{i})^{p_{i}p_{j}}\prod_{i=1}^{l}(\beta -\alpha_{i})^{p_{i}}\prod_{i=l+1}^{n}(\alpha_{i}-\beta)^{p_{i}},
$$
which gives
$$
\left.\left(\frac{\partial}{\partial\beta}\right)^{\!p_{l}}\right\rvert_{\beta=\alpha_{l}}\!\!W_{(\beta,l)}\,=\,
e^{\sum_{i=1}^{n}p'_{i}\alpha_{i}x}
\,\prod_{i=1}^n\,\prod_{s=1}^{p'_i-1}\,
s!\,\prod_{1\leq i<j\leq n}(\alpha_{j}-\alpha_{i})^{p'_{i}p'_{j}}\,.
$$
This proves the induction step
for formula \eqref{denom}.

To prove formula \eqref{num}, we fix $i$ and use induction on $s=p_{i}-j-1$.
The base of induction at $s=0$ is given by formula \eqref{denom}.

Fix $s_{0}\in\Z_{\geq 0}$. Suppose that \eqref{num} is true for all $n$, all $p_{1}\lc p_{n}$ and $j$ such that $s=s_{0}$. Fix $p_{1}\lc p_{n}$, and $j$ such that $p_{i}-j-1=s_{0}$. Let $W_{(\beta,i,j)}$ be the Wronski determinant obtained from $W_{ij}(\Vh^{p_{1}\lc p_{n}})$ by inserting
the exponential
$e^{\beta x}$ after the function $x^{p_{i}-1}e^{\alpha_{i} x}$ if $j\leq p_{i}-1$ or after the function $x^{p_{i}-2}e^{\alpha_{i} x}$ if $j=p_{i}-1$.
Notice that
$$(\partial/\partial\beta)^{p_{i}}|_{\beta=\alpha_{i}} W_{(\beta,i,j)}=W_{ij}(\Vh^{p'_{1}\lc p'_{n}})\,,$$
where $p'_{l}=p_{l}$ for $\,l\neq i$, $\,p'_{i}=p_{i}+1$, and $\,s'=p'_{i}-1-j=s_{0}+1$.

\smallskip
By the induction assumption, we have
\begin{align*}\label{ind2}
W_{(\beta,i,j)}\,=\,e^{\sum_{l=1}^{n}(p_{l}-\delta_{il})\alpha_{l}x+\beta x}
\,r_{ij}(x)
\, & \prod_{l=1}^n\prod_{\substack{s=1\\(l,s)\ne(i,j)}}^{p_l-1}\!\!
s!\,\prod_{1\leq l<l'\leq n}(\alpha_{l'}-\alpha_{l})^{(p_{l}-\delta_{li})(p_{l'}-\delta_{l' i})}
\\
{}\times{} &
\prod_{l=1}^{i}\,(\beta-\alpha_{l})^{p_{l}-\delta_{il}}
\prod_{l=i+1}^{n}(\alpha_{l}-\beta)^{p_{l}-\delta_{il}},
\end{align*}
where $r_{ij}(x)$ is a monic poynomial and $\,\deg r_{ij}(x) =p_{i}-j-1$.
The last formula gives
$$
\left. \displaystyle\left(\frac{\partial}{\partial\beta}\right)^{\!p_{i}}\right\rvert_{\beta=\alpha_{i}}\!\!W_{(\beta,i,j)}
\,=\,
e^{\sum_{l=1}^{n}(p'_{l}-\delta_{il})\alpha_{l}x}\,A(x)
\,\prod_{l=1}^n\prod_{\substack{s=1\\(l,s)\ne(i,j)}}^{p'_l-1}\!\!
s!\,\prod_{1\leq l<l'\leq n}(\alpha_{l'}-\alpha_{l})^{(p'_{l}-\delta_{li})(p'_{l'}-\delta_{l' i})} ,
$$
where $A(x)$ is a monic polynomial and $\,\deg A(x)=\deg r_{ij}(x)+1$.
This completes the induction step for formula \eqref{num}.
\end{proof}

For each $\,i=1\lc n$, set
\begin{equation}
\label{ddi}
\boldsymbol{d}_{i}=\{n_{i}+\mu^{(i)}_j-j\ |\ j=1\lc n_{i}\}\,,\qquad
\boldsymbol{d}_{i}^{c}=\{0,1,2\lc p_{i}-1\}\setminus\boldsymbol{d}_{i}\,.
\end{equation}

\begin{lem}\label{complementary space}
We have $\;\boldsymbol{d}_{i}^{c}=\{n_{i}-(\mu^{(i)})'_j+j-1\ |\ j=1\lc\mu_{1}^{(i)}\}\,$.
\end{lem}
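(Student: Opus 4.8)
The plan is to compare the two sets as subsets of $\{0,1,\dots,p_i-1\}$ by a direct counting/bijection argument, using the standard conjugate-partition identity $(\mu^{(i)})'_1 = n_i$ and $\mu^{(i)}_j \ge r \iff (\mu^{(i)})'_r \ge j$. Write $\mu=\mu^{(i)}$, $\nu=(\mu^{(i)})'$, $n_i=\nu_1$, $p_i=\mu_1+n_i$ for brevity. First I would record the two candidate sets:
$$
\boldsymbol d_i=\{\,n_i+\mu_j-j\ |\ j=1\lc n_i\,\},\qquad
E:=\{\,n_i-\nu_j+j-1\ |\ j=1\lc\mu_1\,\}.
$$
Both are subsets of $\{0,1\lc p_i-1\}$: for $\boldsymbol d_i$ this holds because $0\le\mu_j\le\mu_1$ and $1\le j\le n_i$ give $n_i+\mu_j-j$ between $n_i-n_i\ge 0$ and $n_i+\mu_1-1=p_i-1$ (and one checks distinctness since $j\mapsto\mu_j-j$ is strictly decreasing); for $E$ it holds because $1\le\nu_j\le\nu_1=n_i$ and $1\le j\le\mu_1$ give $n_i-\nu_j+j-1$ between $n_i-n_i+1-1=0$ and $n_i-1+\mu_1-1=p_i-1$ (and $j\mapsto j-\nu_j$ is strictly increasing, so the entries are distinct). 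Hence $|\boldsymbol d_i|=n_i$ and $|E|=\mu_1$, and $|\boldsymbol d_i|+|E|=n_i+\mu_1=p_i=|\{0\lc p_i-1\}|$. So it suffices to prove $\boldsymbol d_i\cap E=\emptyset$, which by the cardinality count forces $E=\boldsymbol d_i^c$.

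For the disjointness, I would argue by contradiction: suppose $n_i+\mu_a-a=n_i-\nu_b+b-1$ for some $a\in\{1\lc n_i\}$, $b\in\{1\lc\mu_1\}$, i.e. $\mu_a-a=-\nu_b+b-1$, i.e. $\mu_a+\nu_b=a+b-1$. The key point is the complementary behaviour of a Young diagram along its "hook" directions. Consider the cell position $(a,b)$: if $b\le\mu_a$, then column $b$ has at least $a$ rows of length $\ge b$, so $\nu_b\ge a$, giving $\mu_a+\nu_b\ge b+a> a+b-1$, a contradiction; if $b>\mu_a$, then row $a$ has length $<b$, so among the first $a$ rows none reaches length $b$, hence $\nu_b\le a-1$... here I need the sharper statement $\mu_a\ge \nu_b \iff b\le\mu_a$ paired with $\mu_a<b\iff \nu_b<a$ — more precisely, the standard fact that $\mu_a+\nu_b\ge a+b$ when $(a,b)$ lies in the diagram and $\mu_a+\nu_b\le a+b-2$ when it lies strictly outside, so the value $a+b-1$ is never attained. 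The cleanest route is: for fixed $a$, the function $b\mapsto\mu_a+\nu_b-(a+b)$ jumps from $\ge 0$ (for $b\le\mu_a$) to $\le -2$ (for $b\ge\mu_a+1$) precisely because $\nu_{\mu_a}\ge a$ while $\nu_{\mu_a+1}\le a-1$; hence it is never $-1$, which is exactly the relation $\mu_a+\nu_b=a+b-1$. This establishes $\boldsymbol d_i\cap E=\emptyset$ and completes the proof.

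The only genuine obstacle is pinning down the right off-by-one in the conjugate-partition inequality (the transition $\nu_{\mu_a}\ge a$, $\nu_{\mu_a+1}\le a-1$ and hence the forbidden value $a+b-1$); everything else is bookkeeping with arithmetic progressions and a cardinality count. An alternative, slicker phrasing that avoids the contradiction entirely is to exhibit the explicit bijection between $\{1\lc\mu_1\}$ and $\boldsymbol d_i^c$ by "reading the boundary path" of the diagram of $\mu$ inside the $n_i\times\mu_1$ rectangle: the multiset $\{n_i+\mu_j-j\}_{j=1}^{n_i}\cup\{j-1+n_i-\nu_j\}_{j=1}^{\mu_1}$ records the positions of the $n_i$ vertical steps and the $\mu_1$ horizontal steps of the lattice path from the top-right to the bottom-left corner of the rectangle, and these together account for every integer in $\{0\lc p_i-1\}$ exactly once — which is precisely the claim. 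I would present whichever version is shorter in the final text; both reduce to the same elementary fact about conjugate partitions.
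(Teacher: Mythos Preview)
Your primary argument is correct: you establish that both $\boldsymbol d_i$ and the candidate set $E$ lie in $\{0,\ldots,p_i-1\}$, have complementary cardinalities $n_i$ and $\mu_1$, and are disjoint via the dichotomy $\mu_a+\nu_b\ge a+b$ when $b\le\mu_a$ versus $\mu_a+\nu_b\le a+b-2$ when $b>\mu_a$. (There is a harmless arithmetic slip in your upper bound for $E$: $n_i-1+\mu_1-1=p_i-2$, not $p_i-1$, but $p_i-2\le p_i-1$ so the containment still holds.)

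The paper takes a different, purely pictorial route --- in fact precisely the ``boundary path'' alternative you sketch at the end. It draws the Young diagram of $\mu^{(i)}$, numbers the $p_i$ unit segments along the staircase boundary from $0$ to $p_i-1$, and observes that the vertical segments (right edges of rows) receive exactly the labels $n_i+\mu^{(i)}_j-j$, while the horizontal segments (bottom edges of columns) receive exactly the labels $n_i-(\mu^{(i)})'_j+j-1$; since every boundary segment is one or the other, the two sets partition $\{0,\ldots,p_i-1\}$. Your counting-plus-disjointness proof is more self-contained algebraically and avoids any appeal to a picture, at the cost of the case analysis on $b\le\mu_a$ versus $b>\mu_a$; the paper's approach is shorter to state once the picture is drawn and makes the bijection visually transparent, but leaves the verification that the labels match the claimed formulas to the reader. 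Either is perfectly adequate for this lemma.
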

\begin{proof}
Consider the Young diagram corresponding to the partition $\mu^{(i)}$.
Enumerate, starting from $0$, the sides of boxes in this diagram that form the bottom-right boundary, see the picture.

\medskip
\hbox to\hsize{\hss\hsize130.2pt\parindent0pt
\vtop{\vskip-\baselineskip\vtop to 0pt{\offinterlineskip
\vrule height.4pt depth 0pt width 130.2pt\par
\vrule height18pt \kern18pt
\vrule height18pt \kern18pt
\vrule height18pt \kern18pt
\vrule height18pt \kern18pt
\vrule height18pt \kern18pt
\vrule height18pt \kern18pt
\vrule height18pt \kern17.8pt
\vrule height18pt width1.6pt\par
\vrule height.4pt depth0pt width73.8pt
\vrule height.4pt depth1.2pt width56.3pt\vskip-1.2pt
\vrule height18pt \kern18pt
\vrule height18pt \kern18pt
\vrule height18pt \kern18pt
\vrule height18pt \kern18pt
\vrule height18pt width1.6pt%\kern54.8pt
\par
\vrule height.4pt depth0pt width36.8pt
\vrule height.4pt depth1.2pt width38.4pt%\kern54.8pt
\vskip-1.2pt
\vrule height18pt \kern18pt
\vrule height18pt \kern18pt
\vrule height18pt width1.6pt%\kern91.6pt
\par
\vrule height1.6pt width38.4pt%\kern91.6pt
\vss}}\kern-130.2pt
\vtop{\vtop to 0pt{\strut\kern137pt\raise2.8pt\rlap{\footnotesize$p_i-1$}\kern-7pt\vskip-4pt
\strut\kern78.6pt\rlap{.}\kern9.2pt\rlap{.}\kern9.2pt\rlap{.}\kern9.2pt\rlap{.}\kern9pt\rlap{.}\kern9pt\rlap{.}\vskip-6pt
\strut\kern78.6pt\rlap{.}\vskip-5.6pt
\strut\kern49.4pt\rlap{.}\kern9.2pt\rlap{.}\kern9.2pt\rlap{.}\vskip-6.4pt
\strut\kern42.3pt\rlap{\footnotesize2}\vskip-.1pt
\strut\kern7.8pt\rlap{\footnotesize0}\kern18.4pt\rlap{\footnotesize1}\vss}
\vskip64pt}\hss}

\bigskip
\noindent
Then by \eqref{ddi}, the set $\,\boldsymbol d_i$
corresponds to the
right-most sides of the rows, which are
the vertical sides of the boundary.
Thus the complementary set $\,\boldsymbol d_i^c$
corresponds to the horizontal sides of the boundary,
which are the bottom sides of the columns.
The last observation proves the lemma.
\end{proof}

Let $D_{V}$ be the fundamental differential operator of $V$. Define $\Dh=\prod_{i=1}^{n}\left(d/dx-\alpha_{i}\right)^{p_{i}}$. Then $\ker\Dh=\Vh$. Therefore, $\,\ker D_{V}\subset\ker\Dh$, and
there exists a differential operator $\check{D}_{V}$, such that $\Dh=\check{D}_{V}D_{V}$,
see Section \ref{s63}. Let $\check{V}^{\dagger}=\ker\check{D}^{\dagger}$.

\begin{thm}\label{3}
The space
$\,\check{V}^{\dagger}$ is a space of quasi-exponentials with the data $(\bar{\mu}',\bar{\lambda}';-\bar{\alpha},\bar{z})$.
\end{thm}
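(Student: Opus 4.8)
The plan is to describe $\ker\check{D}_V^{\dagger}$ explicitly via the Wronskian formula of Proposition~\ref{kernel for quotient conjugate}, and then to verify separately the two requirements in the definition of a space of quasi-exponentials: that $\check{V}^{\dagger}$ has a basis of the shape prescribed by the data $(\bar{\mu}',\bar{\lambda}';-\bar{\alpha},\bar{z})$, and that its singular points and exponents are as required. Throughout set $n_i=(\mu^{(i)})'_1$, $p_i=\mu^{(i)}_1+n_i$, $M'=\dim V=\sum_i n_i$, $L'=\sum_i\mu^{(i)}_1$, $P=\sum_i p_i=M'+L'$, and normalize the basis $f^{(i)}_j=q_{ij}(x)e^{\alpha_ix}$ of $V$ from Section~\ref{s3} so that $q_{ij}$ is monic of degree $\boldsymbol{d}_i[j]:=n_i+\mu^{(i)}_j-j$, the $j$-th largest element of the set $\boldsymbol{d}_i$ of~\eqref{ddi}. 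Since $V=\bigoplus_iV_i$ with $V_i\subset\Vh_i:=\mathrm{span}(x^de^{\alpha_ix}:0\le d<p_i)$ and $V_i$ is spanned by $n_i$ functions whose polynomial parts have pairwise distinct degrees $\boldsymbol{d}_i$, the $p_i$ functions $\{f^{(i)}_j\}_j\cup\{x^de^{\alpha_ix}:d\in\boldsymbol{d}_i^c\}$ are linearly independent (echelon in degree), hence a basis of $\Vh_i$; taking the union over $i$ gives a basis of $\Vh=\ker\Dh$ extending the basis of $V$. Applying Proposition~\ref{kernel for quotient conjugate} with $D=D_V$, $\Dh=\check{D}_VD_V$, and this basis yields a basis $\{\phi^{(i)}_d:d\in\boldsymbol{d}_i^c,\ i=1\lc n\}$ of $\ker\check{D}_V^{\dagger}$, where $\phi^{(i)}_d$ is the Wronskian of this list with $x^de^{\alpha_ix}$ removed, divided by the Wronskian of the full list.

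\emph{Shape of the basis.} By~\eqref{denom} the denominator equals $c_0K\,e^{(\sum_i p_i\alpha_i)x}$ with $c_0,K\neq0$. For the numerator I would expand the Wronskian multilinearly in the columns coming from the $f^{(i')}_j$, writing $q_{ij}=\sum_dc^{(ij)}_dx^d$, obtaining a sum of Wronskians of monomials times exponentials. A summand survives only if all chosen monomials are distinct; since in block $i'$ the complementary monomials already occupy the degrees $\boldsymbol{d}_{i'}^c$ (minus $(i,d)$ when $i'=i$), the $q$-choices must fill exactly $\boldsymbol{d}_{i'}$ in block $i'\neq i$ and $(\boldsymbol{d}_i\cup\{d\})\setminus\{e\}$ in block $i$. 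Because $\deg q_{ij}=\boldsymbol{d}_i[j]$ and $\boldsymbol{d}_i$ is decreasing, a rearrangement argument shows the only nonzero choice in a block $i'\neq i$ is $q_{i'j}\mapsto x^{\boldsymbol{d}_{i'}[j]}$, and in block $i$ either $e=d$ (coefficient $1$) or $e=\boldsymbol{d}_i[j_0]$ with $\boldsymbol{d}_i[j_0]>d$. Hence the leading term of the numerator is $\pm W_{i,d}(\Vh)$, and each remaining surviving term is a scalar multiple of some $W_{i,\boldsymbol{d}_i[j_0]}(\Vh)$ with $\boldsymbol{d}_i[j_0]>d$; by~\eqref{num} these equal $e^{(\sum_l p_l\alpha_l-\alpha_i)x}$ times polynomials of degrees $p_i-d-1$ and $p_i-\boldsymbol{d}_i[j_0]-1<p_i-d-1$. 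Dividing, $\phi^{(i)}_d$ is a polynomial of degree exactly $p_i-d-1$ times $e^{-\alpha_ix}$. For fixed $i$ there are $|\boldsymbol{d}_i^c|=\mu^{(i)}_1=((\mu^{(i)})')'_1$ such functions, and by Lemma~\ref{complementary space} their degrees form exactly the set $\{((\mu^{(i)})')'_1+(\mu^{(i)})'_j-j:j=1\lc\mu^{(i)}_1\}$ --- the basis shape prescribed by the data $(\bar{\mu}',\bar{\lambda}';-\bar{\alpha},\bar{z})$.

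\emph{Singular points and exponents.} Here I would use the factorization $\Dh=\check{D}_VD_V$. At any $z\notin\{z_1\lc z_k\}$ the operator $D_V$ is regular, and since $\Dh$ has constant coefficients (hence no singular point), $\check{D}_V$ must be regular at $z$ as well; so the singular points of $\check{D}_V$, and therefore of $\check{D}_V^{\dagger}$, lie in $\{z_1\lc z_k\}$. At $z_a$ (local coordinate $t=x-z_a$), $\Dh$ has exponents $\{0\lc P-1\}$, while $D_V$ has exponents $\boldsymbol{e}^{(a)}=\{M'+\lambda^{(a)}_l-l:l=1\lc M'\}$, a subset of $\{0\lc P-1\}$ because $V\subset\Vh$. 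Taking a local basis of $\ker\Dh$ that extends an echelon basis of $\ker D_V$ and applying $D_V$ --- which, being Fuchsian with indicial polynomial $\prod_l(v-\boldsymbol{e}^{(a)}_l)$, lowers the valuation of a germ by exactly $M'$ unless that valuation is an exponent --- shows $\check{D}_V$ has exponents $\{d-M':d\in\{0\lc P-1\}\setminus\boldsymbol{e}^{(a)}\}$ at $z_a$. Formal conjugation reflects exponents by $\eta\mapsto(\mathrm{ord}-1)-\eta$ (read off from the Wronskian formula in Proposition~\ref{Kernel for conjugate}), so $\check{D}_V^{\dagger}$ has exponents $\{0\lc P-1\}\setminus\{(P-1)-\boldsymbol{e}^{(a)}_l\}$ at $z_a$; since $\lambda^{(a)}\neq0$, this differs from $\{0\lc L'-1\}$, so every $z_a$ is genuinely singular. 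Finally $\lambda^{(a)}$ fits in an $M'\times L'$ box (at most $M'$ parts, each at most $L'$ since $\boldsymbol{e}^{(a)}_1\le P-1$), so the complementation identity for the $\beta$-sets of conjugate partitions, $\{0\lc P-1\}\setminus\{(P-1)-(M'+\lambda^{(a)}_l-l):l=1\lc M'\}=\{L'+(\lambda^{(a)})'_s-s:s=1\lc L'\}$, identifies the exponent partition of $\check{V}^{\dagger}$ at $z_a$ with $(\lambda^{(a)})'$. As $\bar{\mu}'$ and $\bar{\lambda}'$ contain no zero partitions, this shows $\check{V}^{\dagger}$ is a space of quasi-exponentials with data $(\bar{\mu}',\bar{\lambda}';-\bar{\alpha},\bar{z})$.

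\emph{Main obstacle.} The delicate step is the multilinear expansion above together with the claim that every non-leading surviving summand of the numerator has degree strictly below that of $W_{i,d}(\Vh)$: this is precisely what forces $\phi^{(i)}_d$ to be a genuine quasi-exponential --- one exponential $e^{-\alpha_ix}$ of the exact degree --- and it is where the interplay of $\boldsymbol{d}_i$ with $\boldsymbol{d}_i^c$ (Lemma~\ref{complementary space}) enters in an essential way. The exponent computation is routine given standard facts about Fuchsian indicial equations, but must be written carefully, since the exponents here are resonant (consecutive integers) even though, the singularities being apparent, no logarithmic solutions arise.
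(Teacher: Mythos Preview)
Your proposal is correct. The first half (shape of the basis) is essentially the paper's argument, though you supply the row-reduction/multilinear-expansion details that the paper compresses into the single displayed formula \eqref{basVcd}; in particular, your observation that the surviving correction terms are exactly the $W_{i,e}(\Vh)$ with $e\in\boldsymbol{d}_i$, $e>d$, is a sharper version of the paper's sum $\sum_{l>j}C_{ijl}W_{il}(\Vh)$.

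The second half (singular points and exponents) takes a genuinely different route. The paper proceeds by choosing, at each point $z$, a local echelon basis $\{\psi_1,\ldots,\psi_{M'},\chi_1,\ldots,\chi_M\}$ of $\Vh$ extending an echelon basis of $V$, and then applies Proposition~\ref{kernel for quotient conjugate} a second time to read off the exponents of $\check{V}^{\dagger}$ directly from the valuations of the Wronskian ratios. You instead argue through the factorization $\Dh=\check{D}_VD_V$: compute the exponents of $\ker\check{D}_V$ by applying the Fuchsian operator $D_V$ to the local basis of $\Vh$ (using that $D_V$ drops the valuation by exactly $M'$ off its indicial roots), and then invoke the reflection $\eta\mapsto(\mathrm{ord}-1)-\eta$ under formal conjugation. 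Both computations give the same set $\{0,\ldots,P-1\}\setminus\{P-1-e^{(a)}_l\}$ and hence the partition $(\lambda^{(a)})'$. Your route is more conceptual and avoids the second Wronskian calculation; the paper's route stays entirely within the explicit Wronskian formalism already set up, so it does not need to verify separately that $D_V$ and $\check{D}_V$ are Fuchsian at $z_a$ or to handle the negative intermediate ``exponents'' of $\check{D}_V$ (which, as you note in your closing remarks, requires the observation that the singularities are apparent).
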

\begin{proof}
The space $V$ has a basis of the form
$\,\{q_{ij}(x)e^{\alpha_{i}x}\ |\ i=1\lc n$, $\,j=1\lc n_{i}\}$, where $q_{ij}(x)$ are polynomials and $\,\deg q_{ij} = n_{i}+\mu^{(i)}_{j}-j\,$.
Then the functions $x^{l}e^{\alpha_{i}x}$, $\,i=1\lc n$, $\,l\in \boldsymbol{d}_{i}^{c}$, complement
this basis of $\,V$ to a basis of $\,\hat V$.

\smallskip
By Proposition \ref{kernel for quotient conjugate},
the space $\,\check{V}^{\dagger}$ has the following basis
\begin{equation}
\label{basVcd}
\frac{W_{ij}(\Vh)}{W(\Vh)}\,+\sum_{l=j+1}^{p_i-1}\,C_{ijl}\,\frac{W_{il}(\Vh)}{W(\Vh)}\;,
\qquad i=1\lc n\,,\quad j\in\boldsymbol{d}_{i}^{c}\,,\kern-3em
\end{equation}
where $\,C_{ijl}$ are complex numbers. Then by Lemma \ref{denomnom}, for
each $i,j$, the corresponding element of this basis has the form $\,\tilde r_{ij}(x)e^{-\alpha_{i}x}$,
where $\,\tilde r_{ij}(x)$ is a polynomial of degree $\,p_i-j-1$.

By Lemma \ref{complementary space}, $\,j\in\boldsymbol{d}_{i}^{c}$ if and only if
$\,j=n_{i}-(\mu^{(i)})'_{l}+l-1$ for some $\,l\in\{1\lc\mu_{1}^{(i)}\}\,$.
Set $\,\check q_{il}(x)=\tilde r_{ij}(x)$.
Then $\,\check V^\dagger$ has a basis of the form
$\,\{\check q_{il}(x)e^{-\alpha_{i}x}\ |\ i=1\lc n$, $\,l=1\lc\mu_1^{(i)}\}\,$
and
\begin{equation}
\label{degqc}
\deg\check q_{il}\,=\,\deg\tilde r_{ij}\,=\,
\mu_{1}^{(i)}+n_{i}-(n_{i}-(\mu^{(i)})'_{l}+l-1)-1=\mu_{1}^{(i)}+(\mu^{(i)})'_{l}-l\,.
\end{equation}

\smallskip
Recall $\,M'=\dim V=\sum_{i=1}^n(\mu^{(i)})'_1\,$.
Set $\,M=\dim\check V^\dagger=\sum_{i=1}^n\mu^{(i)}_1$.
We also have $\,\dim\hat V=M'+M$.

\smallskip
Fix a point $z\in \mathbb{C}$,
and
let $\boldsymbol{e}=\{e_{1}>\ldots>e_{M'}\}$ be the set of exponents of $V$ at $z$.
Then
there is
a basis $\{\psi_{1}\lc\psi_{M'}\}$ of $V$ such that
\begin{equation}
\label{psii}
\psi_{i}\,=\,(x-z)^{e_i}\bigl(1+o(1)\bigr)\,,\qquad x\to z\,,\kern-2em
\end{equation}
for any $i=1\lc M'$.

Set $\,\hat{\boldsymbol e}\,=\,\{\,\hat e_{1}<\hat e_{2}<\ldots<\hat e_{M}\}=
\{0,1,2\lc M'+M-1\}\setminus\boldsymbol e\,$.
By formula \eqref{denom}, the Wronskian $W(\Vh)$ has no zeros,
thus
$z$ is not a singular point of $\Vh$.
Therefore, there is
a basis $\{\psi_{1}\lc\psi_{M'},\chi_{1}\lc\chi_{M}\}$ of $\Vh$ such that
\begin{equation}
\label{chii}
\chi_{i}(x)\,=\,(x-z)^{\hat e_{i}}\bigl(1+o(1)\bigr)\,,\qquad
x\to z\,,\kern-2em
\end{equation}
for any $i=1\lc M$.

By Proposition \ref{kernel for quotient conjugate},
the set
\begin{equation}\label{basis}
\left\{\left.\frac{W(\psi_{1}\lc\psi_{M'},\chi_{1}\lc
\chi_{i-1},\chi_{i+1}
\lc\chi_{M})}{W(\psi_{1}\lc\psi_{M'},\chi_{1}\lc\chi_{M})}\ \;\right|\,\ i=1\lc M\right\}
\end{equation}
is a basis of $\check{V}^{\dagger}$.
Formulas \eqref{denom}, \eqref{psii}, \eqref{chii} show that
for any $\,i=1\lc M$,
$$
\frac{W(\psi_{1}\lc\psi_{M'},\chi_{1}\lc\chi_{i-1},\chi_{i+1}\lc
\chi_{M})}{W(\psi_{1}\lc\psi_{M'},\chi_{1}\lc\chi_{M})}\,=\,
C_i\,(x-z)^{M'+M-\hat e_i-1}\bigl(1+o(1)\bigr)
$$
as $x\to z$, where $\,C_i$ is a nonzero complex number.
Therefore, the set of exponents of $\check{V}^{\dagger}$ at the point $z$ is
$\,\check{\boldsymbol e}^\dagger=\{M'+M-\hat e_1-1\lsym>M'+M-\hat e_M-1\}$.
In particular, $z$ is a singular point of $\check{V}^{\dagger}$ if and only if
$z$ is a singular point of $V$.

\smallskip
If a partition $\lambda=(\lambda_{1},\lambda_{2},\dots{})$ corresponds
to the set $\boldsymbol e$, that is, $\lambda_{i}=e_i+i-M'$ for $i=1\lc M'$,
and $\lambda_{i}=0$ for $i>M'$, then similarly to
Lemma \ref{complementary space}, $\hat e_i=M'-\lambda_i'+i-1$, and
$\check e_i^\dagger=M'+M-\hat e_{i}-1=\lambda_{i}'+M-i$. Thus the set
$\check{\boldsymbol e}^\dagger$ of exponents of $\check{V}^{\dagger}$ at $z$
corresponds to a partition $\lambda'$.

Recall that the data $(\bar{\mu},\bar{\lambda};\bar{\alpha},\bar{z})$ are
reduced, in particular, $\bar z$ is the set of singular points of $V$.
To summarize, the consideration above shows that $\bar z$ is the set of
singular points of $\check{V}^{\dagger}$ as well, and $\check{V}^{\dagger}$
is the space of quasi-exponentials with the data
$(\bar{\mu}',\bar{\lambda}';-\bar{\alpha},\bar{z})$.
Theorem \ref{3} is proved.
\end{proof}

\subsection{Proof of Theorem \ref{1}}
It is sufficient to prove Theorem \ref{1}, parts \eqref{1i} and \eqref{2i} for the case of reduced data
$(\bar{\mu},\bar{\lambda};\bar{\alpha},\bar{z})$. This is immediate
for part \eqref{1i}, since $M',L,D_V$ and $\Dti_V$ depend only on
$(\bar{\mu},\bar{\lambda};\bar{\alpha},\bar{z})^{\red}$.
And for part \eqref{2i}, the following observation does the job:
if $(\bar{\mu},\bar{\lambda};\alb\bar{\alpha},\alb\bar{z})^{\red}=
(\bar{\mu}^{\red},\bar{\lambda}^{\red};\bar{\alpha}^{\red},\bar{z}^{\red})$,
then $(\bar{\lambda}',\bar{\mu}';\bar{z},-\bar{\alpha})^{\red}=
\bigl((\bar{\mu}^{\red})',\alb(\bar{\lambda}^{\red})';\bar{z}^{\red},-\bar{\alpha}^{\red}\bigr)$.

Let $V$ be a space of quasi-exponentials with the reduced data
$(\bar{\mu},\bar{\lambda};\bar{\alpha},\bar{z})$.
Let $\{f_{1}\lc f_{M'}\}$ be a basis of $V$ and
\vvn.1>
$D_{V}=\sum_{i=0}^{M'}b_{i}(x)(d/dx)^{M'-i}$ be the fundamental operator
of $V$.
For each $\,i=1\lc M'$, the ratio
$W_{i}(f_{1}\lc f_{M'})/W(f_{1}\lc f_{M'})$ is a rational function of $x$
regular at infinity. Together with
Lemma \ref{wronskian formula for differential op}, this proves
Lemma \ref{coefficients}, and we can consider $D_{V}$ as an invertible element
of $\Psi\mathfrak{D}$.

\smallskip
For any $i=0\lc M'$, let $\sum_{j=0}^{\infty}b_{ij}x^{-j}$ be the Laurent series of $b_{i}(x)$ at infinity. We will refer to the functions $b_{i}(x)$ as coefficients of the differential operator $D_{V}$, and to $b_{ij}$ as \textit{expansion} coefficients of the differential operator $D_{V}$. This terminology also applies to any differential operator with rational coefficients.

Notice that the formal conjugation $(\cdot)^{\dagger}$ of a differential operator, introduced in Section \ref{s62}, is consistent with the formal conjugation on $\Psi\mathfrak{D}$, introduced in Section \ref{s2}.
Recall the involutive antiautomorphism $(\cdot)^{\ddagger}: \Psi\mathfrak{D}\to \Psi\mathfrak{D}$ introduced in Section \ref{s2}.

\smallskip
Let $\,\Dh=\prod_{i=1}^{n}\left(d/dx-\alpha_{i}\right)^{\mu^{(i)}_1+(\mu^{(i)})'_1}$.
Denote by $\check{D}_{V}$
the quotient differential operator such that $\Dh=\check{D}_{V}D_{V}$.
Set $\,D_V^\times=\bigl(\,\prod_{a=1}^{k}\,(x-z_{a})^{\lambda^{(a)}_{1}}\check{D}_{V}^{\dagger}\bigr)^\ddagger$.
\vvn.1>
Recall the pseudodifferential operator $\tilde D_V$ defined by \eqref{tilde D}.
It is straightforward to verify that
\begin{equation}
\label{tildeD}
D_V^\times\,=\,(-1)^M\,\prod_{i=1}^{n}\,(x+\alpha_{i})^{\mu^{(i)}_{1}}\Dti_{V}\,,
\end{equation}
where $M=\mu^{(1)}_1\lsym+\mu^{(n)}_1\,$.

\smallskip
The next theorem is proved in \cite{MTV3}.

\begin{thm}\label{bispectral}
Let $\,D$ be the fundamental differential operator of a space of quasi-expo\-nen\-tials with the data
$(\bar{\mu}',\bar{\lambda}';-\bar{\alpha},\bar{z})$.
Then the following holds.
\begin{enumerate}
\item
\label{bsi}
The differential operator $\;\prod_{a=1}^{k}(x-z_{a})^{\lambda^{(a)}_1}D\,$
\vvn.1>
has polynomial coefficients.
\item
\label{bsii}
The differential operator
$\;\prod_{i=1}^{n}(x+\alpha_{i})^{-\mu^{(i)}_1}\bigl(\,\prod_{a=1}^{k}(x-z_{a})^{\lambda^{(a)}_1}D\bigr)^\ddagger$
\vvn.16>
is monic and has\\ order \,$L=\lambda^{(1)}_1\lsym+\lambda^{(k)}_1$.
\item
\label{bsiii}
The kernel of
$\;\bigl(\,\prod_{a=1}^{k}(x-z_{a})^{\lambda^{(a)}_1}D\bigr)^\ddagger$
is a space of quasi-expo\-nentials with the data\\
$(\bar{\lambda}', \bar{\mu}';\bar{z},-\bar{\alpha})$.
\end{enumerate}
\end{thm}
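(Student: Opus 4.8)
The plan is to prove the three parts in order: part~(1) as a local pole bound at each singular point, part~(2) as a computation of the leading symbol, and part~(3) --- the identification of the kernel --- as the substantive bispectral step. We may assume the data reduced. Throughout, write $\mathcal W=\ker D$; it is a space of quasi-exponentials with data $(\bar\mu',\bar\lambda';-\bar\alpha,\bar z)$, so $\dim\mathcal W=M:=\mu^{(1)}_1\lsym+\mu^{(n)}_1$ and $D$ is monic of order $M$. Note that $\lambda^{(a)}_1$ is the number of nonzero parts (the length) of the ramification partition $(\lambda^{(a)})'$ of $\mathcal W$ at $z_a$; equivalently, the exponent set of $\mathcal W$ at $z_a$ contains the block $\{0,1\lc M-\lambda^{(a)}_1-1\}$. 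Put $E=\prod_{a=1}^k(x-z_a)^{\lambda^{(a)}_1}D$.

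\emph{Part~(1).} By the Wronskian formula \eqref{a_{i}(x)} applied to a quasi-exponential basis of $\mathcal W$ (as in the proof of Lemma~\ref{coefficients}), the coefficients of $D$ are rational, regular at infinity, and regular away from $z_1\lc z_k$; it remains to bound the pole order at $z_a$ by $\lambda^{(a)}_1$, which is a local question. Factor $D=PQ$ by Proposition~\ref{fact}, ordering the basis by order of vanishing at $z_a$ so that $Q$ is the fundamental operator of the span of the $M-\lambda^{(a)}_1$ functions of smallest order of vanishing at $z_a$; by the remark above that span has exponent set $\{0\lc M-\lambda^{(a)}_1-1\}$, hence is nonsingular at $z_a$, so $Q$ has coefficients holomorphic at $z_a$ and $P$ is monic of order $\lambda^{(a)}_1$. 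Applying \eqref{a_{i}(x)} to $\ker P$ and estimating the orders of vanishing at $z_a$ of the sub-Wronskians $W_i(\ker P)$ --- keeping track of the cancellations in the leading terms forced by the exponent data, which is the delicate point --- bounds the pole orders of the coefficients of $P$, hence of $D=PQ$, by $\lambda^{(a)}_1$ at $z_a$. Consequently $E$ has polynomial coefficients, each of $x$-degree at most $L:=\lambda^{(1)}_1\lsym+\lambda^{(k)}_1$, since every coefficient of $D$ is bounded at infinity and $\prod_a(x-z_a)^{\lambda^{(a)}_1}$ has degree $L$.

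\emph{Part~(2).} Because $\mathcal W$ contains exactly $\mu^{(i)}_1$ basis functions proportional to $e^{-\alpha_i x}$ and $D$ is regular at infinity, the limiting constant-coefficient operator of $D$ at $x=\infty$ is $\prod_{i=1}^n(d/dx+\alpha_i)^{\mu^{(i)}_1}$; writing $D=\sum_{i=0}^M c_i(x)(d/dx)^{M-i}$ one has $c_i(x)\to e_i$ as $x\to\infty$, where $\sum_i e_i u^{M-i}=\prod_i(u+\alpha_i)^{\mu^{(i)}_1}$. Hence the highest $x$-degree part of $E$ is $x^L\sum_i e_i(d/dx)^{M-i}$, and no coefficient of $E$ has $x$-degree exceeding $L$. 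Since $(\cdot)^\ddagger$ interchanges the exponents of $x$ and $d/dx$ monomial by monomial and preserves the algebra of polynomial-coefficient operators, $E^\ddagger$ is again a differential operator with polynomial coefficients, of order exactly $L$, with leading coefficient $\sum_i e_i x^{M-i}=\prod_i(x+\alpha_i)^{\mu^{(i)}_1}$; therefore $\prod_i(x+\alpha_i)^{-\mu^{(i)}_1}E^\ddagger$ is monic of order $L$.

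\emph{Part~(3).} Use the Fourier-type identity $R_x(e^{xs})=(R^\ddagger)_s(e^{xs})$, valid for polynomial-coefficient $R$, which combined with $(E^\ddagger)^\ddagger=E$ gives $(E^\ddagger)_x(e^{xs})=E_s(e^{xs})$. Hence for a closed contour $\gamma$ in the $s$-plane encircling $z_1\lc z_k$ and any $\phi$ in $\ker E^\dagger$ (in the variable $s$), the entire function $h(x)=\oint_\gamma\phi(s)e^{xs}\,ds$ satisfies $(E^\ddagger)_x h(x)=\oint_\gamma\phi(s)E_s(e^{xs})\,ds=\oint_\gamma(E^\dagger_s\phi(s))e^{xs}\,ds=0$, by the Lagrange identity (the bilinear concomitant is meromorphic and single-valued on $\gamma$). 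Since $E=\bigl(\prod_a(x-z_a)^{\lambda^{(a)}_1}\bigr)D$, Proposition~\ref{Kernel for conjugate} gives $\ker E^\dagger$ an explicit basis of functions $e^{\alpha_i s}r(s)$ with $r$ rational and regular off $\{z_1\lc z_k\}$, so $h(x)=\sum_a\Res_{s=z_a}\bigl(\phi(s)e^{xs}\bigr)$ is a finite sum of polynomials times $e^{z_a x}$. This transform annihilates exactly the $\phi$ holomorphic at every $z_a$, so a dimension count identifies its image with $\ker E^\ddagger$, a space of quasi-exponentials with exponentials among $e^{z_1 x}\lc e^{z_k x}$. It then remains to read off the data: the multiplicities of the $e^{z_a x}$ and the polynomial degrees are governed by the principal parts of $\ker E^\dagger$ at the $z_a$, which produce the partition $(\lambda^{(a)})'$ after conjugation exactly as in Lemma~\ref{complementary space} and Theorem~\ref{3}; the singular points are precisely $-\alpha_1\lc-\alpha_n$, because $\prod_i(x+\alpha_i)^{-\mu^{(i)}_1}E^\ddagger$ has poles there and is regular elsewhere, and the ramification partition at $-\alpha_i$ comes out to $(\mu^{(i)})'$ from the local behavior of $E^\ddagger$ at $x=-\alpha_i$. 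This matching of exponents and ramification data in part~(3) is the main obstacle (with the cancellation bookkeeping in part~(1) the other delicate point); once it is carried out, $\ker E^\ddagger$ is the space of quasi-exponentials with data $(\bar\lambda',\bar\mu';\bar z,-\bar\alpha)$.
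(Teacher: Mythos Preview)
The paper does not prove this theorem at all: it is stated with the single sentence ``The next theorem is proved in \cite{MTV3}'' and then used as a black box in the proof of Theorem~\ref{1}. So there is nothing in the paper to compare your argument against; you are attempting to reprove a result the authors import wholesale from \cite{MTV3}.

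As for your sketch: the overall strategy is sound and is indeed close in spirit to how bispectral duality is established in \cite{MTV3}. Part~(2) is fine. In part~(1) your local factorization $D=PQ$ at $z_a$ is the right move, and once you know $\ker P$ consists of functions holomorphic at $z_a$ (which it does, being $Q(\ker D)$ with $Q$ holomorphic there), the point $z_a$ is a regular singularity of $P$, so the $i$-th coefficient of $P$ has a pole of order at most $i\le\lambda^{(a)}_1$; the ``cancellation bookkeeping'' you flag is unnecessary if you invoke this Fuchsian fact directly. The genuine gap is part~(3). Your contour-integral transform $\phi\mapsto\oint_\gamma\phi(s)e^{xs}\,ds$ and the identity $R_x(e^{xs})=(R^{\ddagger})_s(e^{xs})$ do produce elements of $\ker E^{\ddagger}$ from $\ker E^{\dagger}$, and the idea of computing the kernel of this transform to get a dimension match is correct in outline. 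But you explicitly defer the two substantive computations --- that the image has dimension exactly $L$, and that the resulting quasi-exponential data is precisely $(\bar\lambda',\bar\mu';\bar z,-\bar\alpha)$ --- calling them ``the main obstacle\ldots once it is carried out.'' That is the heart of the theorem, and without it you have a plausible program rather than a proof. In \cite{MTV3} this step is done by an explicit analysis of the principal parts at each $z_a$ and a careful exponent count analogous to (but more involved than) Lemma~\ref{complementary space}; you would need to supply that analysis.
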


By Theorem \ref{3},
one can apply Theorem \ref{bispectral} to the monic differential operator
$(-1)^M\check{D}_{V}^{\dagger}$. Hence, the differential operator
$\prod_{a=1}^{k}(x-z_{a})^{\lambda^{(a)}_{1}}\check{D}_{V}^{\dagger}$
has polynomial coefficients
\vvn.1>
and the pseudodifferential operator
$D_V^\times$
is actually a differential operator.
Furthermore, formula \eqref{tildeD} and parts \eqref{bsii}, \eqref{bsiii} of Theorem \ref{bispectral}
yield partss \eqref{1i} and \eqref{2i} of Theorem \ref{1}.

\smallskip
To prove part \eqref{i3} of Theorem \ref{1}, consider a chain of
transformations:
\begin{equation}
\label{chain}
D_{V}^{\aug}\,\overset{(1)}{\longto}\,D_{V}\,\overset{(2)}{\longto}\,\check{D}_{V}\,\overset{(3)}{\longto}\,\check{D}_{V}^{\dagger}
\,\overset{(4)}{\longto}\,\prod_{a=1}^{k}\,(x-z_{a})^{\lambda^{(a)}_{1}}\check{D}_{V}^{\dagger}
\,\overset{(5)}{\longto}\,
D_V^\times\,\overset{(6)}{\longto}\,
\Dti_{V}\,\overset{(7)}{\longto}\,\Dti_{V}^{\aug}\,.\kern-1.6em
\end{equation}

\begin{lem}\label{transformation breakdown}
For each of the transformations in chain \eqref{chain}, the expansion coefficients of the transformed operator can be expressed as polynomials in the expansion coefficients of the initial operator.
\end{lem}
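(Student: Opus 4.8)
The plan is to go through chain \eqref{chain} transformation by transformation and check in each case that the expansion coefficients of the output operator are polynomials in the expansion coefficients of the input operator, with coefficients that are polynomials in $\bar\alpha,\bar z$. Since the composition of polynomial maps is polynomial, this establishes the lemma. I would organize the seven transformations into three types according to the operation involved.

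\emph{Type A: multiplication by an explicit differential operator with polynomial or rational-regular-at-infinity coefficients.} Transformations $(1)$, $(4)$, $(7)$, and the $(x+\alpha_i)^{\mu_1^{(i)}}$-part of $(6)$ are of this kind. Here one simply multiplies a given operator $E=\sum_s e_s(x)(d/dx)^{s}$ by a fixed operator (such as $\prod_{i,\,\mu^{(i)}=0}(d/dx-\alpha_i)$, or $\prod_a(x-z_a)^{\lambda_1^{(a)}}$, or a power of $(d/dx-z_a)$, or a polynomial in $x$). Using the multiplication rule \eqref{algebra relation} of $\Psi\mathfrak D$, the expansion coefficients of the product at a given power of $d/dx$ are finite $\Z$-linear combinations of products of the $e_s$-coefficients with the coefficients of the fixed operator; in all these cases the fixed operator has coefficients polynomial in $x^{-1},\bar\alpha,\bar z$, so the claim is immediate. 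Transformation $(3)$, the formal conjugation $\check D_V\mapsto\check D_V^{\dagger}$, is by the definition in Section \ref{s62} (equivalently, by \eqref{4} applied term by term) also of this linear type: the expansion coefficients of $\check D_V^{\dagger}$ are finite $\Z$-linear combinations of those of $\check D_V$.

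\emph{Type B: the automorphism $(\cdot)^{\ddagger}$, i.e.\ transformation $(5)$ and the $\ddagger$-part of $(6)$.} By the definition of $(\cdot)^{\ddagger}$ on $\Psi\mathfrak D$, it swaps the roles of $x$ and $d/dx$; applied to a \emph{differential} operator with \emph{polynomial} coefficients it again produces a differential operator with polynomial coefficients, and the coefficient array is merely transposed (up to the explicit combinatorial factors coming from reordering via \eqref{algebra relation}). So its effect on expansion coefficients is again given by fixed polynomial formulas. Here I would invoke Theorem \ref{bispectral}\eqref{bsi} (via Theorem \ref{3}, applied to $(-1)^M\check D_V^{\dagger}$) to know that the input of $(5)$ genuinely has polynomial coefficients, so that $(\cdot)^{\ddagger}$ of it, and hence $D_V^{\times}$, is a bona fide differential operator and the formula makes sense; combined with \eqref{tildeD} this also handles $(6)$.

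\emph{Type C: the quotient, transformation $(2)$, $D_V\mapsto\check D_V$ with $\Dh=\check D_V D_V$.} This is the only non-routine step and the main obstacle. One must show that the coefficients of $\check D_V$ are polynomials in the coefficients of $D_V$ (the operator $\Dh=\prod_i(d/dx-\alpha_i)^{p_i}$ being fixed with coefficients polynomial in $\bar\alpha$). The idea is to work in $\Psi\mathfrak D$: by Lemma \ref{coefficients} and Corollary \ref{invertible2}, $D_V$ is invertible, and $\check D_V=\Dh\,D_V^{-1}$. The inversion formula in Lemma \ref{invertible pseudodifferential operators} expresses the expansion coefficients of $D_V^{-1}$ as polynomials in those of $D_V$ (since $D_V$ is monic, the leading coefficient $C_{KM}$ equals $1$, so no division is needed and the geometric series $\sum(-1)^j\Dt^j$ contributes, in each fixed expansion coefficient, only finitely many terms). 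Multiplying by the fixed $\Dh$ keeps things polynomial. The one thing to verify is that $\check D_V$ is in fact a genuine differential operator of order $k$ — this is already guaranteed by the factorization argument in Section \ref{s63} — so that passing through $\Psi\mathfrak D$ does not lose information: the expansion coefficients of $\check D_V$ as a differential operator are read off from its image in $\Psi\mathfrak D$, and these are polynomial in the $b_{ij}$ by the computation just described. I expect the bookkeeping needed to make the ``only finitely many terms contribute to each fixed expansion coefficient'' argument precise — using the grading of $\Psi\mathfrak D$ by order in $d/dx$ together with the known orders of $D_V$, $\check D_V$, $\Dh$ — to be the main point requiring care.
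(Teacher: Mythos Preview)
Your overall plan is sound and matches the paper's: handle each arrow in chain \eqref{chain} separately and verify polynomiality of the expansion coefficients at each step. But you have the direction of transformation $(1)$ backward. The arrow goes from $D_V^{\aug}$ to $D_V$, and since $D_V^{\aug}=D_V\cdot\prod_{i,\,\mu^{(i)}=0}(d/dx-\alpha_i)$, step $(1)$ \emph{removes} the right factor rather than multiplying by it; your Type~A argument therefore does not cover $(1)$ as written. The paper handles this via the explicit triangular relation $b_i(x)=\sum_{j=0}^{i}\beta^{\,i-j}b_j^{\beta}(x)$, stripping off one factor $(d/dx-\beta)$ at a time; alternatively your Type~C inversion argument would work here as well, since $(d/dx-\beta)^{-1}=\sum_{j\ge0}\beta^{\,j}(d/dx)^{-j-1}$ has expansion coefficients polynomial in $\beta$. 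Likewise, step $(6)$ is purely left-multiplication by $(-1)^M\prod_i(x+\alpha_i)^{-\mu_1^{(i)}}$ (a negative power, whose Laurent expansion at infinity has coefficients polynomial in $\bar\alpha$); the $\ddagger$ is entirely contained in step $(5)$, so there is no ``$\ddagger$-part of $(6)$''. A small side correction to your Type~B remark: by the definition of $(\cdot)^{\ddagger}$ in Section~\ref{s2}, the coefficient array is literally transposed, $C_{ij}\mapsto C_{ji}$, with no extra combinatorial factors from reordering.

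For step $(2)$ the paper does not pass through $D_V^{-1}$ in $\Psi\mathfrak D$; instead it equates the coefficients of $(d/dx)^{M'+M-j}$ on both sides of $\Dh=\check D_V D_V$ to get the recursion
\[
c_j(x)\,=\,a_j\,-\,\sum_{i=0}^{j-1}\sum_{l=0}^{i}c_{i-l}(x)\,\frac{d^{\,l}}{dx^{\,l}}\,b_{j-i}(x)\,,
\]
which expresses each $c_j$ polynomially in the $b_i$'s, their derivatives, and lower $c_r$'s. Your inversion approach is valid and equivalent (monicity of $D_V$ is indeed the crucial point), but the paper's direct recursion sidesteps the ``only finitely many terms contribute to each fixed coefficient'' bookkeeping you anticipated.
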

\begin{proof}
Fix $\beta\in\C$.
Let
$b_0(x)\lc b_{M'}(x)$, $b^{\beta}_0(x)\lc b^{\beta}_{M'+1}(x)$ be the coefficients of the differential operators $D_{V}$ and $D_{V}\left(d/dx-\beta\right)$:
$$D_{V}=\sum_{i=0}^{M'}b_{i}(x)\left(\frac{d}{dx}\right)^{M'-i},\quad D_{V}\left(\frac{d}{dx}-\beta\right)=\sum_{i=0}^{M'+1}b^{\beta}_{i}(x)\left(\frac{d}{dx}\right)^{M'+1-i}.$$
Then Lemma \ref{transformation breakdown} for transformation (1) follows from the relations:
\begin{equation}\label{transformation1}
b_{i}(x)=\sum_{j=0}^{i}\beta^{i-j} b^{\beta}_{j}(x),\quad i=1\lc M'.
\end{equation}

Let $c_0(x)\lc c_M(x)$, and $a_0\lc a_{M'+M}$,
be the coefficients of the differential operators $\check{D}_{V}$ and $\Dh$:
$$\check{D}_{V}=\sum_{j=0}^{M}c_{j}(x)\left(\frac{d}{dx}\right)^{M-j},\quad\Dh=\sum_{l=0}^{M'+M}a_{l}\left(\frac{d}{dx}\right)^{M'+M-l}.$$
The coefficients $a_{0}\lc a_{M'+M}$ are the elementary symmetric polynomials in $\alpha_{1}\lc\alpha_{n}$.

\smallskip
Fix $j=0\lc M$. Equalizing the coefficients for $(d/dx)^{M'+M-j}$ in both sides of the relation $\Dh=\check{D}_{V}D_{V}$, we get
\begin{equation}\label{relation for coefficients 2}
c_{j}(x)=a_{j}-\sum_{i=0}^{j-1}\sum_{l=0}^{i}c_{i-l}(x)\left(\frac{d^{l}}{dx^{l}}\,b_{j-i}(x)\right).
\end{equation}
Since
the function $c_{r}$ appears in the right-hand side of formula \eqref{relation for coefficients 2} only
for $r<j$,
we can recursively express $c_{j}(x)$ as polynomials in $b_{i}(x)$ and
their
derivatives. This proves the statement for transformation (2).

Let $\tilde c_{j}(x)$, $j=0\lc M$, be the coefficients of the differential operator $\check{D}_{V}^{\dagger}$:
$$\check{D}_{V}^{\dagger}=\sum_{j=1}^{M}\tilde c_{j}(x)\left(\frac{d}{dx}\right)^{M-j}.$$
Then we have $\,\tilde c_{j}(x)=\sum_{l=0}^{j}(-1)^{M-l}\left((d^{l}/dx^{l})c_{j-l}(x)\right)$.
This proves
the statement
for transformation (3).

\smallskip
For transformations (4), (6), and (7), the statement
is obvious.
For transformation (5), the statement
follows
from the definition of the antiautomorphism $(\cdot)^{\ddagger}$
that
transforms the coef\-ficients of a pseudodifferential operator $\sum_{i=-\infty}^{I}\sum_{j=-\infty}^{J}C_{ij}x^{i}(d/dx)^{j}$ by the rule $\,C_{ij}\mapsto C_{ji}$.
\end{proof}

%%<<
Lemma \ref{transformation breakdown} provides an algorithm for expressing
the coefficients \,$\tilde b_{st}$ of the differential operator
$\Dti_{V}^{\aug}$ in item \eqref{i3} of Theorem \ref{1} via the coefficients
\,$b_{ij}$ of the operator $D_{V}^{\aug}$. It is clear that this algorithm
depends only on the data $(\bar{\mu},\bar{\lambda};\bar{\alpha},\bar{z})$ and
generates polynomial expressions in $\,b_{ij}$\,. This proves the existence of
the polynomials $P_{st}$ in item \eqref{i3} of Theorem \ref{1}.
%%Lemma \ref{transformation breakdown} proves the existence of the polynomials $P_{st}$ in
%%item \eqref{i3} of Theorem \ref{1}, and the algorithm of expressing
%%the expansion coefficients of the differential operator $\Dti_{V}^{\aug}$ as polynomials in the expansion coefficients of
%%$D_{V}^{\aug}$ is the same for all spaces
%%of quasi-expo\-nen\-tials with the same data $(\bar{\mu},\bar{\lambda};\bar{\alpha},\bar{z})$.
%%Hence, the polynomials $P_{st}$ are the same for all spaces of quasi-exponentials with the fixed data.
%%>>

It is easy to see that for each transformation in chain \eqref{chain},
expressions for expansion coefficients of the transformed operator in terms
of expansion coefficients of the initial operator
%%<<
are polynomials in
$\bar{\alpha},\bar{z}$. For transformations (1) and (2)\:, it follows from
relations \eqref{transformation1} and \eqref{relation for coefficients 2},
respectively. Transformations (3) and (5) do not involve $\bar{\alpha}$ and
$\bar{z}$ at all. For transformations (4) and
\vvn.06>
(6)\:, notice that multiplication of a differential operator by the factor
$\,\prod_{a=1}^{k}(x-z_{a})^{\lambda^{(a)}_{1}}\!$ or
$\,\prod_{i=1}^{n}(x+\alpha_{i})^{-\mu^{(i)}_{1}}\!$ results in multiplication
of its expansion coefficients by polynomials in $\,z_1\lc z_k\,$ or
$\,\alpha_1\lc\alpha_n$, respectively. Finally, for transformation (7),
notice that for any $\beta\in\C$, multiplication of a differential operator
by $(d/dx-\beta)$ from the right results in multiplication of its expansion
coefficients by polynomials in $\beta$.
%%>>

\smallskip
Theorem \ref{1} is proved.

\end{document}